\numberwithin{equation}{section}
\newtheorem{theorem}{Theorem}
\newtheorem{lemma}{Lemma}
\newtheorem{proposition}{Proposition}
\newtheorem{remark}{Remark}
\newtheorem{definition}{Definition}
\newcommand{\bydef}{\stackrel{\rm{def}}{=}}
\newcommand{\mI}{{\mathcal I}}
\newcommand{\mJ}{{\mathcal J}}
\newcommand{\mS}{{\mathcal S}}
\newcommand{\mM}{{\mathcal M}}
\newcommand{\mL}{{\mathcal L}}
\newcommand{\aL}{L} 
\newcommand{\dL}{\Lambda} 
\newcommand{\mX}{{\mathcal X}}
\newcommand{\bN}{{\mathbb N}}
\newcommand{\bR}{{\mathbb R}}
\newcommand{\bZ}{{\mathbb Z}}
\newcommand{\bP}{{\mathbb P}}
\newcommand{\bE}{{\mathbb E}}
\begin{document}
%
%
%
%
 \title{Closed queueing networks under congestion: non-bottleneck independence and bottleneck convergence}
%
\author[1]{Jonatha Anselmi}
\affil[1]{Basque Center for Applied Mathematics (BCAM), anselmi@bcamath.org}
%
\author[2]{Bernardo D'Auria}
\affil[2]{Universidad Carlos III de Madrid, bernardo.dauria@uc3m.es}
%
\author[3]{Neil Walton}
\affil[3]{University of Amsterdam, n.s.walton@uva.nl}
\date{}
%
%


\maketitle

\abstract{
We analyze the behavior of closed multi-class product-form queueing networks when the number of customers grows
to infinity and remains proportionate on each route (or class).
First, we focus on the stationary behavior and prove the conjecture that the stationary distribution
at non-bottleneck queues converges weakly to the stationary distribution of an ergodic, open
product-form queueing network, which is geometric. This open network is obtained by replacing bottleneck queues with
per-route Poissonian sources whose rates are uniquely determined by the solution of a strictly concave
optimization problem. We strengthen such result by also proving convergence of the first moment of the queue lengths of
non-bottleneck stations.
Then, we focus on the transient behavior of the network and use fluid limits to prove that the
amount of fluid, or customers, on each route eventually concentrates on the
bottleneck queues only, and that the long-term proportions of fluid in each route and in each queue
solve the dual of the concave optimization problem that determines the throughputs of the previous
open network.
}

\section{Introduction}

Complex systems such as communication and computer networks are composed of a number of interacting
particles (or customers) that exhibit important congestion phenomena as their level of interaction
grows. The dynamics of such systems are affected by the randomness of their underlying events, e.g.,
arrivals of units of work, and can be described stochastically in terms of queueing network models.
Provided that these are tractable, they allow one to make predictions on the performance achievable
by the system, to optimize the network configuration and to perform capacity-planning studies. These
objectives are usually difficult to achieve without a mathematical model because real systems are
huge in size; e.g., Urgaonkar et al. \cite{Urgaonkar05}.

The focus of this paper is on the well-known class of \emph{closed} queueing network models introduced in
Kelly \cite{Kelly79} and Baskett et al. \cite{BasCMP75}. Specifically, a fixed number of customers
circulate in a network following given \emph{routes}. A route is a sequence of queues (or stations)
that forms a cycle in the network.
In terms of the amount of service required on each queue, users belonging to the same route are
statistically equivalent. In contrast, users belonging to different routes can be statistically
different.
The stationary probability distribution of
these queueing networks has the \emph{product-form} property, which formally means that it can be written
as the product of simple terms associated to each queue up to a normalizing constant.
This surprising property represents a big step forward for the understanding of the stationary
behavior of this queueing network model, as it drastically reduces the intrinsic computational complexity of solving
the global balance equations of the underlying Markov chain. However, due to the quick growth of the state space and despite
the attention devoted to this problem during the last decades, the computation of the normalizing
constant remains a notoriously difficult task, especially when the number of customers is large.
This issue limits the application of these models to optimization and dimensioning studies of real
systems.

During the last decades, several approaches have been investigated to assess the stationary behavior
of closed product-form queueing networks with a large number of customers.
A large body of the literature aims at developing exact algorithms for the efficient computation of
the normalizing constant or stationary performance indices such as mean queue lengths and throughputs;
see, e.g., Reiser and Kobayashi \cite{ReiK75b}, Reiser and Lavenberg \cite{RL80}, Harrison and Coury
\cite{Harr02}, Casale \cite{Casale2011} and the references therein. To the best of our knowledge, no
exact algorithm has a running time that is polynomial with respect to the numbers of routes,
customers and queues.
Motivated by this difficulty, a number of alternative analyses emerged in the literature for the
stationary behavior.
These mainly consist in:
\begin{itemize}
\item Using the mean value analysis (MVA) by Reiser and Lavenberg \cite{RL80} to develop iterative
or fixed-point algorithms; e.g., Schweitzer \cite{Scw}, Chandy and Neuse \cite{Chandy82}, Pattipati
et al. \cite{Pattipati90}, Wang et al. \cite{Wang08}. While these techniques improve the running
time of MVA, there is no guarantee that they converge to the exact solution except for particular
cases.
\item Developing efficient bottleneck identification techniques; e.g., Schweitzer \cite{Scw81},
Schweitzer et al. \cite{SSB93}, Casale and Serazzi \cite{Casale04}, Anselmi and Cremonesi
\cite{Anselmi2010}. This approach aims at reducing the network size by ignoring the impact of the
stations that have a minor influence on the overall performance.
\item Studying the network behavior when some parameter approaches a limiting value of practical interest, which is the approach followed in this article.
\end{itemize}

A number of limiting regimes have been studied in queueing networks.
For the case of single-class networks, see Goodman and Massey \cite{GoMa84}. More generally, through a number of examples, Whitt \cite{Whitt84} expounds the technique of approximating
closed systems with the behaviour of an appropriate open queueing system. For open and closed migration processes,  Pollett \cite{Po00} and Brown and Pollett \cite{BrPo82} find methods to identify bottlenecks and, in addition, characterize the total variation distance between the output process of some queue and a Poisson process. 
Works analyse the fluid and heavy-traffic limit of single-class closed queueing networks. Chen and Mandelbaum \cite{ChMa91} provides a heavy traffic approximation of closed networks, and a characterization of bottleneck behaviour is found.
The paper extends these results to two classes of customers with different priorities. In Kaspi and Mandelbaum \cite{KaMa92}, the
generality of the service type distributions is greatly increased and a concrete
connection between the limiting behaviour of bottleneck queues and the stationary
distributions of a Brownian network is made. 
The book of Chen and Yao \cite{ChenYao2001} analyzes the fluid and diffusion limits for generalized Jackson networks, for both open and closed topologies. 
The word generalized refers to the fact that the external Poisson arrivals are substituted by general and independent renewal processes. The analysis there, though, mainly considers single-class networks,
or in the multi-class case focuses on the FIFO discipline. This discipline is very different from the processor sharing one and shows qualitatively different behavior, for instance unexpected instability occurs when all stations of the network are individually stable for the solution of the traffic flow equations; Kumar and Seidman \cite{Kumar90dynamicinstabilities}. Fluid analysis of multi-class open queueing networks is found in Bramson \cite{Br96}. Detailed discussions on Bramson's work will be made in subsequent sections of this article. 

In the closed multi-class queueing networks investigated in this article, a number of works have
assumed the existence of an $M/M/\infty$ queue and allowed the total number of jobs,
say $n$, grow in proportion to its service (also known as `think') times; see e.g. McKenna and Mitra \cite{MitraMcKenna86},
McKenna \cite{McKenna84}, Berger et al. \cite{Berger99}. 
Further analyses let~$n$ grow to infinity in proportion to the number of stations; Knessl and Tier \cite{Knessl90,Knessl92}.
Finally, another important approach is to study the network behavior when~$n$ grows to infinity keeping fixed the
proportions of customers in each route; Pittel \cite{Pittel}, Schweitzer \cite{Scw81}, Balbo and
Serazzi \cite{BalS97}, Walton \cite{Walton09}, Walton et al. \cite{Walton09b}, Anselmi and Cremonesi
\cite{Anselmi2010}, George et al. \cite{Xia12}.

The focus of this paper is on the last approach described above where $n$, the population vector, grows to infinity keeping fixed the proportions of
customers in each route.
In this limiting regime, it is known that some queues, called \emph{bottlenecks} in the following,
increase their backlog proportionally to~$n$, see Pittel \cite{Pittel} and Walton et al.
\cite{Walton09b}, and uniquely determine the throughput of customers along each route by means of a
concave optimization problem; Schweitzer \cite{Scw81}, Walton \cite{Walton09}.
Interestingly, this optimization problem coincides with the utility optimization problem that
determines the fractions of bandwidth (or rates) allocated to multiple classes of concurrent
Internet flows (or end-to-end Internet connections); see Kelly et al. \cite{KMT98}, Srikant
\cite{Sr04}.
On the other hand, the amount of backlog in each \emph{non}-bottleneck is strictly bounded by~$O(n)$
but in general its limiting behavior is not known. There is numerical evidence to support the
conjecture that the limiting stationary distribution of each non-bottleneck queue is geometric;
Balbo and Serazzi \cite{BalS97}. Such convergence was proved by Pittel \cite{Pittel} under the
assumption that the mode of the stationary distribution is unique; see also Gordon and Newell
\cite{GorNew67}, Lipsky et al. \cite{Lipsky82}, Anselmi and Cremonesi \cite{Anselmi2010}. This
assumption, for instance, is not satisfied when the number of bottlenecks is less than the number of
routes but greater than one, which is a natural scenario in real networks.
As we shall explain, one of the principal contributions of this paper is to prove this conjecture by
removing the unimodal assumption.

The above conjecture provides an accurate and efficient approximation to mean performance indices that takes into account the contribution of
non-bottlenecks.
The impact on performance of non-bottlenecks becomes
non-negligible if their number prevails significantly, and this is often the case; see Casale and
Serazzi \cite{Casale04}, Anselmi and Cremonesi \cite{Anselmi2010}.
This approximation allows for the direct development of efficient optimization frameworks able to
address, for instance, data-center consolidation problems \cite{consolidation08}, where the objective is to reduce the cost
and the size of a data-center while guaranteeing a given performance level. Furthermore, it provides
a good initial guess for the iterative or fixed-point algorithms mentioned above.

\subsection{Our contribution.}

Following the approach considered by a large body of the literature, we are interested in the
behavior of closed product-form queueing networks when the number of customers in each route grows
to infinity proportionally. This is mainly motivated because real networks are populated by a large
number of customers; e.g., Urgaonkar et al. \cite{Urgaonkar05}.
Our objective is two-fold and consists in analyzing the problem from two contrasting standpoints.

The first part of this paper focuses on the \emph{stationary} behavior of these networks. We prove
the conjecture that the stationary distribution of non-bottlenecks converges weakly to the
stationary distribution of an ergodic, open product-form queueing network, with geometrically distributed queue lengths. 
This open network is
obtained by replacing bottlenecks with per-route Poissonian sources whose rates are uniquely determined by
the solution of a strictly concave optimization problem.
In addition, we strengthen such result by proving that the mean per-route number of customers of 
non-bottlenecks converges to the corresponding mean number of customers of the same open network, which is important from an
operational standpoint.

On the other hand, the second part of this paper focuses on the \emph{transient}
behavior of the fluid limit version of the network. 
We start from any arbitrary distribution of customers in the network that preserves the proportional distribution. 
Then, we let their total number go to infinity. This allows, after a
renormalization and a limit procedure, to construct the fluid limit version of the network with 
a given initial concentration of fluid in its queues.
We prove that the amount of fluid, or customers, on each route eventually concentrates, as time increases, on the bottleneck queues only and that the (long-term) proportions of fluid in each route and in each bottleneck solve the dual of the concave optimization problem that determines the throughputs of the open network described in the first part above.
Our proof for closed queueing networks uses an entropy Lyapunov function similar to the one used by
Bramson \cite{Br96} to establish convergence properties of the fluid-limit equations of open
queueing networks.

The technical difference behind the two results above is the order in which the limit in the
number of customers and the limit in time are taken. In the second part, the limit in the number of
customers is taken \emph{before} the limit in time, and vice versa.
In stochastic systems, these two limits do not commute in general,
but for the class of queueing networks investigated in this paper we prove that they do.
Taking the limit in the number of customers first provides a natural way to look at the
evolution of a network populated by a large number of customers and, by subsequently taking a limit
in time, we justify fluid model arguments within the queueing literature.
The second result proven in this paper, thus,  increases the robustness
of the approach taken in the first part, which has been followed by several
researchers as referenced above. Furthermore, it can be also seen as a queueing theoretic
analysis of the utility optimization found in congestion control protocols; e.g., Kelly et al.
\cite{KMT98} and Srikant \cite{Sr04}.

\subsection{Organization.} 
In Section~\ref{model}, we introduce the model considered in this paper. In particular,
we include two Markov descriptions of a closed queueing network, relevant quantities such as
bottleneck and non-bottleneck queues are defined, and an expression
for a fluid model of a closed queueing network is given. In Section~\ref{results}, we present the
three main
results of this paper: Section~\ref{result1} shows the asymptotic independence of non-bottleneck
queues 
in the large-population limit; Section~\ref{result2} shows the convergence of the
Markov closed queueing network to a fluid solution; and Section \ref{result3} states that a
fluid solution converges to the set of bottlenecks in a way that minimizes a certain entropy
Lyapunov function. In Section~\ref{proofs}, we prove the main results stated in
Section~\ref{results}. We respectively prove the results of Sections~\ref{result1}, \ref{result2}
and \ref{result3} in Sections~\ref{proof1}, \ref{proof2} and \ref{proof3}.

\section{Closed queueing networks models.}

\label{model}

We consider closed, multi-class queueing networks in the sense of Kelly \cite{Kelly79} and Baskett
et al. \cite{BasCMP75}. 
The set~$\mJ\subset\bN$ denotes the set of queues (or stations) and we let $J=|\mJ|$.
The set~$\mI\subset 2^{\mJ}$ denotes the set of routes (or classes) and we let $I=|\mI|$. A
route is a sequence of queues visited by a customer during one cycle of the
network. We assume, for simplicity, that each customer visits each queue at most once within a
cycle of the network. Within each route $i=\{j^i_1,...,j^i_{k_i}\}$, we associate a route order
$(j^i_1,...,j^i_{k_i})$. For $k=1,...,k_i-1$, a customer departing queue $j^i_k$ will next join
queue $j^i_{k+1}$ and a customer departing queue $j^i_{k_i}$ will join queue queue $j^i_1$.
Unless otherwise specified, $i$ will be used to index routes and $j$ will be used to index queues.
We assume that a constant number of customers circulate along each route of the network. We denote
by
$n=(n_i:i\in\mI)\in\bN^I$ the \emph{population vector}, i.e., the total number of customers on each
route. When joining queue~$j$, we assume that route-$i$ customers require amounts of service that
are independent and exponentially distributed
with mean $\mu^{-1}_{ji}$. At each queue, we assume that customers
are served at rate $1$ according to a processor sharing discipline and customers joining a queue
take a position uniformly at random in the queue.
Thus, if a route-$i$ customer does not join a queue $j$, i.e. $j\notin i$, we may assume
$\mu^{-1}_{ji}=0$.


\subsection{Two Markov models of closed queueing networks.}

Firstly, we could describe the exact location of each customer in
the queue according to its route type. Here the \emph{explicit state} of a queue $j$ would be a
vector $s_j=(i_j(k): k=1,...,m_j)$, where $m_j$ is the number of customers in the queue and $i_j(k)$
is the route type of the customer in the $k^{th}$ position. The explicit state of the network would
then be the vector of each queue's state, $s=(s_j: j\in\mJ)$. We then let $\mX(n)$ be the set
of the explicit states where the number of customers of each route type $i$ is $n_i$.

Secondly, we could ignore positional information about customers within a queue, and instead,
just consider the number of each route type at a queue. Here, we let $m=(m_{ji}: j\in\mJ, i\in\mI,
j\in i)$ be a network \emph{state}, where $m_{ji}$ represents the number of route $i$ customers in
queue~$j$. 
Thus,~$\mS(n)=\{m:\sum_j m_{ji}=n_i, i\in\mI\}$ is the state space of this Markov chain. It may be
verified in a straightforward manner that this state space is finite and irreducible.

Under the above assumptions, it is known (Kelly \cite{Kelly79} and Baskett et al. \cite{BasCMP75})
that the stationary distribution of
being in state~$s$, which we denote by $\pi(s|n)$, or in state $m$, which we denote
$\pi(m|n)$,
is respectively
\begin{align}
\pi(s|n) &=\frac{1}{B(n)}\prod_{j\in\mJ} \prod_{k=1}^{m_j}
\mu_{ji_j(k)}^{-1}=\frac{1}{B(n)}\prod_{j\in\mJ}\prod_{i:j\in i}\mu_{ji}^{-m_{ji}}, 
&\hspace{-2cm} s\in\mX(n), \label{eq:piexplicit}\\
\pi(m|n) &=\frac{1}{B(n)}\prod_{j\in\mJ} \left(\binom{m_j}{m_{ji}:i\ni j} \prod_{i: j\in
i}\mu_{ji}^{-m_{ji}}\right),
&\hspace{-2cm} m\in\mS(n), \label{eq:pi} 
\end{align}
where $B(n)$ is the normalizing constant
\begin{equation}\label{Bn}
B(n)\bydef\sum_{m\in\mS(n)} \prod_{j\in\mJ} \left(\binom{m_j}{m_{ji}:i\ni j} \prod_{i:
j\in
i}\mu_{ji}^{-m_{ji}}\right) \ ,
\end{equation}
that is the same in (\ref{eq:piexplicit}) and (\ref{eq:pi}); see \cite{BasCMP75}. 
To count the possible orderings of customers inside a queue, we use the multinomial coefficient
\begin{equation*}
\binom{m_j}{m_{ji}:i\ni j}\bydef\frac{m_j!}{\prod_{i:j\in i} m_{ji}!}.
\end{equation*}

In the following, we mainly consider the stationary distribution~\eqref{eq:pi}, while the
expression~\eqref{eq:piexplicit} will be used in proofs.

\begin{remark}
There are a large number of
generalizations of our queueing network where \eqref{eq:pi} still gives the stationary probability of the per-route number of
customers in each queue.
For instance, we could
generalize our processor-sharing discipline to the class of symmetric queueing disciplines with unit
service capacity, see Kelly \cite[Section 3.3]{Kelly79}. We could also generalize service
requirements to be independent with mean $\mu^{-1}_{ji}$ and having a rational Laplace transform.
If we keep the assumption that service requirements are exponential and we
assume that the state space of our Markov description is irreducible, then we could generalize to
allow any service discipline that allocates service amongst customers in a way that
does
not discriminate between the route types of customers at the queue. Such service disciplines are
described by Kelly \cite[Section 3.1]{Kelly79}. Finally, in all cases, we may elaborate the routing
structure of our network; we may allow a route $i$ customer to be routed through the network as a
Markov chain (with finite expected exit time) whose states are determined by the set of queues so
far visited.
\end{remark}

Although generalizations leading to stationary distribution \eqref{eq:pi} are
abundant (as explained in the above remark), we do not explore these in further detail for simplicity of exposition. 
Thus, our results in Theorems~\ref{Thrm1} and~\ref{Thrm1_expectation} generalize to such cases.
On the other hand, Theorems~\ref{main fluid theorem} and~\ref{Thrm3} apply to the specific case of processor-sharing queues, and thus they do not apply directly to all the generalizations described above.
The extension of Theorems~\ref{main fluid theorem} and~\ref{Thrm3} when other service disciplines are considered, e.g., last-come-first-served preemptive-resume, is left as future work.

\subsection{Throughputs and bottlenecks.}

A key quantity of interest is the rate at which customers complete service on each route with
respect to a reference queue (say $j_1^i$ for route $i$). Let
\begin{equation}\label{thoughput expression}
\Lambda_i(n)\bydef \sum_{\substack{m\in\mS(n):\\ m_{j^i_1}>0}} \mu_{j^i_1
i} \frac{m_{j^i_1 i}}{m_{j^i_1}} \, \pi(m|n)
\end{equation}
be the \emph{throughput} of customers on route $i$ observed at queue $j^i_1$. In addition,
let
\begin{equation}
U_j(n)\bydef\sum_{i: j\in i} \frac{\Lambda_i(n)}{\mu_{ji}}
\end{equation}
be the \emph{utilization} (or load) of queue~$j$. Using Little's law, $U_j(n)$ can be interpreted as the proportion of time where queue~$j$ is busy.
\begin{remark}
The expression \eqref{thoughput expression} gives the service completion rate of route $i$
customers at queue $j^i_1$, i.e., $\mu_{j^i_1i}$, times the service rate devoted to these customers
at that queue, i.e., ${m_{j^i_1 i}}/{m_{j^i_1}}$, times the stationary probability that there are
$m$ customers in the network, i.e., $\pi(m|n)$. Thus, this expression gives the mean rate (or
throughput) for which route $i$ customers leave queue~$j^i_1$.
Note the route $i$ throughputs at each queue $j\in i$ must be equal. Without loss of generality, we
considered queue $j^i_1$.
\end{remark}

A more concise expression for the per-route throughput is given in the following lemma; 
see, e.g., Bolch et al. \cite[Formula~(8.28)]{Bolch} or, for single class networks, 
Chen and Yao \cite[Formula~(2.10)]{ChenYao2001}.
\begin{lemma}\label{simple Lambda}
\begin{equation}\label{throughput.formula}
\Lambda_i(n)=\frac{B(n-e_i)}{B(n)},\qquad i\in\mI
\end{equation}
where $B(n)$ is the normalizing constant \eqref{Bn} and $e_i$ is the $i^{th}$ unit vector in
$\bR_+^I$. 
\end{lemma}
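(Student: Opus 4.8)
The plan is to verify \eqref{throughput.formula} directly from the definition \eqref{thoughput expression} by an algebraic manipulation of the sum over states, using the explicit form of $\pi(m|n)$ in \eqref{eq:pi}. First I would substitute \eqref{eq:pi} into \eqref{thoughput expression}, so that
\[
\Lambda_i(n)=\frac{1}{B(n)}\sum_{\substack{m\in\mS(n):\\ m_{j^i_1 i}>0}} \mu_{j^i_1 i}\,\frac{m_{j^i_1 i}}{m_{j^i_1}}\;\prod_{j\in\mJ}\left(\binom{m_j}{m_{jk}:k\ni j}\prod_{k:j\in k}\mu_{jk}^{-m_{jk}}\right).
\]
The term $\mu_{j^i_1 i}\, m_{j^i_1 i}/m_{j^i_1}$ should be absorbed into the product over $j\in\mJ$ by modifying the factor for $j=j^i_1$ only. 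The key identity will be that $\frac{m_{j^i_1 i}}{m_{j^i_1}}\binom{m_{j^i_1}}{m_{j^i_1 k}:k\ni j^i_1}=\binom{m_{j^i_1}-1}{\,\cdot\,}$ where the lower entry is the same multi-index but with $m_{j^i_1 i}$ decreased by one; likewise $\mu_{j^i_1 i}\cdot \mu_{j^i_1 i}^{-m_{j^i_1 i}}=\mu_{j^i_1 i}^{-(m_{j^i_1 i}-1)}$. After this rewriting the summand is exactly the product-form weight of a state of the network with population vector $n-e_i$, evaluated at the state $m'$ obtained from $m$ by removing one route-$i$ customer from queue $j^i_1$.

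Next I would make the change of variables $m\mapsto m'=m-E_{j^i_1 i}$ (with $E_{j^i_1 i}$ the unit vector in the $(j^i_1,i)$ coordinate) and check it is a bijection between $\{m\in\mS(n): m_{j^i_1 i}>0\}$ and $\mS(n-e_i)$: the constraint $\sum_j m_{ji}=n_i$ becomes $\sum_j m'_{ji}=n_i-1$ while all other route-sums are unchanged, and the condition $m_{j^i_1 i}>0$ is precisely what makes $m'$ have nonnegative entries. Under this substitution the sum becomes $\sum_{m'\in\mS(n-e_i)}\prod_{j\in\mJ}(\ldots)$, which is by definition \eqref{Bn} the normalizing constant $B(n-e_i)$. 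Dividing by $B(n)$ gives \eqref{throughput.formula}.

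The only mildly delicate point — and the one I would write out carefully — is the combinatorial identity relating the multinomial coefficient at queue $j^i_1$ before and after removing a customer, together with bookkeeping of the edge case $m_{j^i_1 i}=0$ (which is correctly excluded by the summation range, since such states contribute zero service rate to route $i$ at $j^i_1$ and also do not correspond to any state in $\mS(n-e_i)$ via the shift). Everything else is routine relabeling; I do not anticipate a genuine obstacle, only the need to keep the multi-index notation consistent.
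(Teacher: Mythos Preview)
Your argument is correct and is exactly the standard derivation one finds in the references the paper cites; the paper itself does not prove this lemma but simply refers to Bolch et al.\ and Chen and Yao. One cosmetic point: the paper's definition \eqref{thoughput expression} restricts the sum to $m_{j^i_1}>0$ (total queue length positive) rather than $m_{j^i_1 i}>0$ as you wrote, but since the summand vanishes whenever $m_{j^i_1 i}=0$ the two ranges yield the same sum, so this is harmless.
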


%

In agreement with other works (e.g., Anselmi and Cremonesi \cite{Anselmi2010}, Balbo and Serazzi
\cite{BalS97}), we define a bottleneck queue as a queue whose service is saturated.

\begin{definition}
\label{eq:bdef}
Queue $j\in\mJ$ is called \textit{bottleneck} if and only if
\begin{equation}
\label{eq:bottleneck_def}
\lim_{c\rightarrow\infty} U_j(cn+u_c)=1,
\end{equation}
where $\{u_c\}_{c\in\bN}$ is any uniformly bounded sequence such that $c n + u_c\in\bZ_+^I$.
We define the set $\bar{\mJ}\subseteq \mJ $ to be the set of bottlenecks and let
$\bar{J}=|\bar{\mJ}|$. Similarly, we define $\mJ^\circ=\mJ\backslash \bar{\mJ}$ to be the set of
non-bottlenecks and let $J^\circ=|\mJ^\circ|$. 
\end{definition}

We will think of $\mJ^\circ$ as the `open part' of the closed network under investigation.

\subsection{Non-bottleneck queues and open queueing networks.}

We are interested in the probability distribution of non-bottleneck queues. For this reason, we
consider
queue-size vectors $m^\circ=(m^\circ_{ji}: j\in\mJ^\circ, i\in\mI, j\in i)\in\bZ_+^{J^{\circ}\times
I}$ and
\begin{align}\label{closed nb ed}
& \pi^\circ(m^\circ|n) \bydef
\sum_{\substack{\bar{m}\in\bZ_+^{\bar{\mJ}}:\\(\bar{m},m^\circ)\in\mS(n)}}
\pi((\bar{m},m^\circ)|n),\qquad j\in\mJ^\circ, i\in\mI,
\end{align}
which defines the stationary probability that non-bottleneck queues are in state $m^\circ$. 

We also define
\begin{subequations}\label{open ed}
\begin{equation}
\label{open ed a}
\pi^\circ_\Lambda(m^\circ)\bydef\prod_{j\in\mJ^\circ} \pi^\circ_{j,\Lambda}(m^\circ),
\end{equation}
$m^\circ\in\bZ_+^{J^{\circ}\times I},\;\Lambda\in\bR_+^{I}$, where
\begin{equation}
 \pi^\circ_{j,\Lambda}(m^\circ) \bydef \bigg(1- \sum_{i:j\in i}
\frac{\Lambda_i}{\mu_{ji}}\bigg)\left(\begin{array}{
cc } m_j\\m_{ji}\;:i\ni j \end{array}\right)\prod_{i: j\in
i}\left(\frac{\Lambda_i}{\mu_{ji}}\right)^{m_{ji}}.
\end{equation}
\end{subequations}
The distribution $\pi$, given by \eqref{eq:pi}, refers to the stationary distribution of a closed
queueing network.
In contrast, the distribution $\pi^\circ_\Lambda$ can be shown to be the stationary
distribution of an \emph{open} queueing network constructed on queues $\mJ^\circ$. Here customers
arrive on
each route as a Poisson process of rate $\Lambda=(\Lambda_i: i\in\mI)$ and depart the network after
receiving service at each queue on their route $i\cap \mJ^\circ$, see Kelly \cite{Kelly79} and
Baskett et al. \cite{BasCMP75}.

\subsection{Fluid Model.}
In order to study the transient behavior of our closed queueing network, we will analyze the
following fluid model.

\begin{definition}[Closed queueing network fluid model]\label{def:fluid.sol}
\label{def_fluid}
The processes $m(t)=(m_{ji}(t): j\in\mJ, i\in\mI, j\in i)$ and $\aL(t)=(\aL_{ji}(t):
j\in\mJ, i\in\mI, j\in i)$
form a \emph{fluid solution} (or fluid limit) of our closed queueing network if they satisfy the
following conditions:
\begin{subequations}
\label{fluidequns}
 \begin{align}
  m_{j^i_k i}(t)&=\aL_{j^i_{k-1} i}(t)- \aL_{j^i_k i}(t),\label{fluidequns1}\\
\sum_{i: j\in i} \frac{1}{\mu_{ji}}&\left( \aL_{ji}(t)-\aL_{ji}(s) \right) \leq
t-s,\label{fluidequns2}\\
\aL_{ji}(t)&\; \text{is increasing,}\label{fluidequns3}\\  
\text{if } m_{j}(t)>0 &\text{ then }
\dL_{ji}(t)=\frac{m_{ji}}{m_j}\mu_{ji},\label{fluidequns4}\\
\sum_{j: j\in i} &m_{ji}(t)=n_i.\label{fluidequns5}
 \end{align}
\end{subequations}
where we used the definition  $\dL_{ji}(t) \bydef d\aL_{ji}(t)/dt$.
Here, $j\in\mJ, i\in\mI, j\in i$, $t\geq s \geq 0$. Also, $j^i_k$ is the $k^{th}$ queue on route $i$
and
$j_{k-1}^i$ is the queue before the $k^{th}$ queue (we use the convention that the queue
before $j^i_1$ is $j^i_{k_i}$).
\end{definition}

The $(j,i)$-component of the process $m(t)$ denotes the amount of fluid of type $i$
contained at time $t$ at queue $j\in i$. The process $\dL_{ji}(t)$ gives the instantaneous
throughput of this type of fluid at the same queue, while the process $\aL_{ji}(t)$ gives
the total amount of fluid of this type flowed out from queue $j$ by time $t$.

The conditions~\eqref{fluidequns}, thus, are the defining properties of a fluid solution and we
observe that they are analogous to the ones used by Bramson \cite[Formulas (2.3)-(2.6)]{Br96}, which
hold
for the open versions of the considered closed queueing networks\footnote{Actually, the open
queueing networks defined in Bramson \cite{Br96}, called head-of-the-line processor sharing
networks, appear
different from ours. In fact, upon completion of service at one queue, a class-$i_1$ customer
becomes of class $i_2$ with probability $p_{i_1i_2}$. However, one can easily build a mapping from
one representation to the other.}.
In particular, conditions~\eqref{fluidequns1}, \eqref{fluidequns3}, \eqref{fluidequns5} are basic
and relate queue lengths in an obvious manner,
condition~\eqref{fluidequns4} is the property that defines a processor-sharing discipline, and
condition~\eqref{fluidequns2} takes into account the maximal processing rate of the system.

We note that the condition~\eqref{fluidequns2} implies that $\aL_{ji}$
is Lipschitz continuous. By~\eqref{fluidequns1}, this is also true for $m_{ji}$. Lipschitz
continuity implies
absolute continuity, and therefore the processes $\aL_{ji}$ and $m_{ji}$ are differentiable
almost everywhere with respect to the Lebesgue measure. Throughout this document, the term
\emph{for almost every} will refer to a set of real numbers whose complement has Lebesgue measure
zero. Shortly, we will prove that the limit of the closed queueing network described previously
satisfies the fluid model~\eqref{fluidequns}.

\section{Results on closed queueing networks.}\label{results}
In this section, we present the main results of this article. Namely,
Theorems~\ref{Thrm1} and~\ref{Thrm1_expectation}, which state the independence of the per-route numbers of customers in non-bottlenecks and the convergence of their expectations under a large-population limit (Section~\ref{result1}); 
Theorem~\ref{main fluid theorem}, which states that the stochastic process limit of a closed queueing network is a solution to the fluid equations \eqref{fluidequns} (Section~\ref{result2});
Theorem \ref{Thrm3}, which states the convergence in time of the fluid model \eqref{fluidequns} via a Lyapunov function argument (Section~\ref{result3}).

\subsection{Independence of non-bottleneck queues.}
\label{result1}
We will demonstrate that non-bottleneck queues become independent as our closed queueing network becomes congested.
We now consider the limiting behavior of our queueing network with $cn$ customers when
$c\rightarrow\infty$. Given the constraints on the number of customers on each route, the
random variables of the stationary number of per-route customers in each queue are certainly dependent; however, in classic open
queueing networks, where all customers arrive from an external source and eventually leaves the network, they are shown to be independent, see Kelly \cite{Kelly79} and Baskett et al. \cite{BasCMP75}. 

We consider our stationary distributions for closed multiclass queueing networks, \eqref{closed nb ed}, and open multiclass queueing networks, \eqref{open ed}.
In informal terms,  we wish to prove that for each $n\in\bR_+^I, m\in\bZ_+^{J^\circ \times I}$ 
\begin{equation}\label{informal statement}
\pi^\circ(m^\circ| cn) \xrightarrow[c\rightarrow\infty]{}
\pi_{\Lambda^*}^\circ(m^\circ).
\end{equation}
To prove such a statement, we must identify the throughput vector $\Lambda^*\in\bR_+^I$ and the set
of non-bottleneck queues $\mJ^\circ$. Since the vector $cn$ need not to belong to
$\bZ_+^I$, we consider $c n+u_c$ where $\{u_c\}_{c\in\bN}$ is any uniformly bounded sequence such
that $c n + u_c\in\bZ_+^I$. Our theorem can then be written as follows.
\begin{theorem}\label{Thrm1} For $n\in\bR_+^I, m^\circ\in\bZ_+^{J^\circ \times I}$ 
\begin{equation}\label{formal statement}
 \pi^\circ(m^\circ| c n+u_c) \xrightarrow[c\rightarrow\infty]{}
\pi_{\Lambda^{*}(n)}^\circ(m^\circ)
\end{equation}
where $\Lambda^{*}(n)=(\Lambda_{1}^{*}(n),\ldots,\Lambda_{I}^{*}(n))$ is the unique optimizer of the
strictly-concave optimization problem
\begin{subequations}\label{PF Opt}
 \begin{align}
\text{maximize} \quad & \sum_{i\in\mI} n_i \log \Lambda_i\label{PF Opt 1}\\
\text{subject to} \quad & \sum_{i:j\in i} \Lambda_i / \mu_{ji} \le 1,& j\in\mJ \label{PF Opt 2}\\
\text{over} \quad\qquad & \Lambda_i\geq 0,& i\in\mI,\label{PF Opt 3}
\end{align} 
\end{subequations}
and where $\mJ^\circ$, the set of non-bottlenecks, is given by the set of queues $j$ such that
\begin{equation*}
 \sum_{i: j\in i} \frac{\Lambda_{i}^{*}(n)}{\mu_{ji}} < 1.
\end{equation*}
\end{theorem}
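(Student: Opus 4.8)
The plan is to reduce the statement to the asymptotics of normalizing constants. Summing the product form \eqref{eq:pi} over the occupancies of the bottleneck stations gives, for every integer population $N\in\bN^{I}$,
\[
\pi^\circ(m^\circ\mid N)=\frac{\bar B\bigl(N-k(m^\circ)\bigr)}{B(N)}\,w(m^\circ),\qquad
w(m^\circ)\bydef\prod_{j\in\mJ^\circ}\Bigl(\binom{m^\circ_j}{m^\circ_{ji}:i\ni j}\prod_{i:j\in i}\mu_{ji}^{-m^\circ_{ji}}\Bigr),
\]
where $k_i(m^\circ)\bydef\sum_{j\in\mJ^\circ\cap i}m^\circ_{ji}$ is the number of route-$i$ customers sitting in non-bottleneck queues and $\bar B(\cdot)$ is the normalizing constant \eqref{Bn} of the closed sub-network obtained by keeping only the bottleneck stations $\bar{\mJ}$. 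Regrouping the factors in \eqref{open ed} gives
\[
\pi^\circ_{\Lambda}(m^\circ)=\Bigl(\prod_{j\in\mJ^\circ}\bigl(1-\textstyle\sum_{i:j\in i}\Lambda_i/\mu_{ji}\bigr)\Bigr)\;\prod_{i\in\mI}\Lambda_i^{\,k_i(m^\circ)}\;w(m^\circ).
\]
Writing $\bar B(N-k)/B(N)=\bigl(\bar B(N-k)/\bar B(N)\bigr)\bigl(\bar B(N)/B(N)\bigr)$, the theorem is therefore equivalent to proving, with $N=cn+u_c$,
\[
\frac{\bar B(N-k)}{\bar B(N)}\xrightarrow[c\to\infty]{}\prod_{i\in\mI}\Lambda^*_i(n)^{\,k_i}\ \ \text{for each fixed }k\in\bZ_+^{I},
\qquad
\frac{\bar B(N)}{B(N)}\xrightarrow[c\to\infty]{}\prod_{j\in\mJ^\circ}\bigl(1-\textstyle\sum_{i:j\in i}\Lambda^*_i(n)/\mu_{ji}\bigr).
\]

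The first limit comes from the structure of \eqref{PF Opt}. I would first record that $\Lambda^*_i(n)>0$ for all $i$ (else \eqref{PF Opt 1} is $-\infty$) and that, since $n_i>0$, every route meets at least one bottleneck --- otherwise increasing $\Lambda^*(n)$ in coordinate $i$ keeps feasibility and strictly increases \eqref{PF Opt 1}. Hence $\bar{\mJ}$ carries all routes and $\bar B$ is a genuine normalizing constant of an irreducible closed network. Next, by complementary slackness the KKT multipliers of \eqref{PF Opt} at $\Lambda^*(n)$ vanish on the slack constraints, i.e.\ on $\mJ^\circ$; therefore $\Lambda^*(n)$ together with the surviving multipliers satisfies the KKT conditions of the sub-network problem --- problem \eqref{PF Opt} with the constraints restricted to $j\in\bar{\mJ}$ --- which is again strictly concave, so $\Lambda^*(n)$ is \emph{also} its unique optimizer. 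The classical identification of limiting throughputs with this optimizer (Schweitzer \cite{Scw81}, Walton \cite{Walton09}), applied to the sub-network and to arbitrary uniformly bounded perturbations of $cn$, gives $\bar B(cn+u'_c-e_i)/\bar B(cn+u'_c)\to\Lambda^*_i(n)$; telescoping $\bar B(N-k)/\bar B(N)$ along a lattice path from $N$ down to $N-k$ --- a path of \emph{fixed} length, all of whose vertices are uniformly bounded perturbations of $cn$ --- and applying Lemma~\ref{simple Lambda} factor by factor yields the first limit.

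Granting the first limit, the second is an interchange of limit and sum: since $\sum_{m^\circ}\pi^\circ(m^\circ\mid N)=1$ we have $B(N)/\bar B(N)=\sum_{m^\circ}\bigl(\bar B(N-k(m^\circ))/\bar B(N)\bigr)w(m^\circ)$, and the termwise limit of the right-hand side is
\[
\sum_{m^\circ}\prod_{i}\Lambda^*_i(n)^{\,k_i(m^\circ)}w(m^\circ)=\prod_{j\in\mJ^\circ}\bigl(1-\textstyle\sum_{i:j\in i}\Lambda^*_i(n)/\mu_{ji}\bigr)^{-1},
\]
a geometric computation that converges precisely because $\sum_{i:j\in i}\Lambda^*_i(n)/\mu_{ji}<1$ for $j\in\mJ^\circ$. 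Fatou's lemma already gives $\limsup_c\bar B(N)/B(N)\le\prod_{j\in\mJ^\circ}(1-\sum_{i:j\in i}\Lambda^*_i(n)/\mu_{ji})$; the content is the reverse inequality, i.e.\ uniform integrability (over the finite state spaces, uniformly in $N=cn+u_c$) of $m^\circ\mapsto\bigl(\bar B(N-k(m^\circ))/\bar B(N)\bigr)w(m^\circ)$. The way I would obtain this is to prove a bound $\bar B(N-k)/\bar B(N)\le C(n)\prod_i\bar\rho_i^{\,k_i}$, valid for all large $c$ and \emph{all} $k\le N$, where $\bar\rho$ is a fixed vector that is strictly feasible for the sub-network and close enough to $\Lambda^*(n)$ that $\sum_{i:j\in i}\bar\rho_i/\mu_{ji}<1$ at every $j\in\mJ^\circ$: such a bound dominates the summand by a summable function of $m^\circ$, and dominated convergence finishes the proof.

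The main obstacle is exactly this uniform bound. For $k$ in a bounded range it follows from the throughput convergence used above, but $k(m^\circ)$ ranges over a set that grows with $c$, and a lattice path realizing such a $k$ must remove $\Theta(c)$ customers and hence leaves every fixed neighbourhood of the ray $\{cn\}$, where throughputs need no longer be close to $\Lambda^*(n)$ (indeed a non-bottleneck of the network with population $cn$ may become a bottleneck for a sub-population), so the soft estimates break down. Controlling $\bar B(N-k)/\bar B(N)$ up to a \emph{multiplicative constant} --- rather than merely up to $e^{o(|N|)}$ --- forces one to study the precise (local-limit-theorem type) asymptotics of the normalizing constant $\bar B(cn+u)$, uniformly over bounded $u$: a sharp upper estimate for $\bar B(N-k)$ together with a matching-order lower bound for $\bar B(N)$ near the optimal face of \eqref{PF Opt}. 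This is precisely the step where Pittel \cite{Pittel} required the mode of the stationary distribution to be unique, and carrying it out without that hypothesis --- handling the whole optimal face at once, or, alternatively, establishing a direct stochastic domination of the non-bottleneck occupancies by a fixed geometric law uniformly in $N$ --- is the crux of the argument.
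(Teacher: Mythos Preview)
Your decomposition and the treatment of the first limit match the paper exactly: the paper also factors $\pi^\circ(m^\circ\mid N)$ as $w(m^\circ)\cdot\bigl(\bar B(N-n^\circ)/\bar B(N)\bigr)\cdot\bigl(\bar B(N)/B(N)\bigr)$, proves $\bar\Lambda^*(n)=\Lambda^*(n)$ by the KKT argument you give, and obtains $\bar B(N-k)/\bar B(N)\to\prod_i\Lambda^*_i(n)^{k_i}$ by telescoping along a fixed-length lattice path and invoking the throughput convergence of \cite{Walton09}. So far so good.

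Where you diverge is in the second limit, and you have correctly located the obstacle: a dominated-convergence argument on $B(N)/\bar B(N)=\sum_{m^\circ}\bigl(\bar B(N-k(m^\circ))/\bar B(N)\bigr)w(m^\circ)$ needs control of $\bar B(N-k)/\bar B(N)$ for $k$ of order $c$, which the throughput asymptotics do not provide. You then propose to attack this via local-limit-theorem estimates on $\bar B$ or a uniform stochastic-domination bound, and flag this as the crux. That program is plausible but hard, and it is not what the paper does.

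The paper avoids the uniform-integrability issue entirely by computing $\bar B(N)/B(N)$ a different way. First, $\bar B(N)/B(N)=\pi(\{m_j=0,\ \forall j\in\mJ^\circ\}\mid N)$. By inclusion--exclusion this equals a finite signed sum of probabilities $\pi(\{m_j>0,\ \forall j\in\mJ'\}\mid N)$ over subsets $\mJ'\subset\mJ^\circ$. Each such probability is then evaluated through the \emph{explicit} state description \eqref{eq:piexplicit}: decomposing the event $\{m_j>0,\ \forall j\in\mJ'\}$ according to the route type $r(j)$ of the customer at the head of each $j\in\mJ'$ peels off exactly one factor $\mu_{jr(j)}^{-1}$ per queue and leaves the normalizing constant of a network with $N-n'$ customers, where $|n'|\le |\mJ'|$ is \emph{bounded}. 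Thus
\[
\pi(\{i_j(1)=r(j),\ j\in\mJ'\}\mid N)=\Bigl(\prod_{j\in\mJ'}\mu_{jr(j)}^{-1}\Bigr)\,\frac{B(N-n')}{B(N)}
\xrightarrow[c\to\infty]{}\prod_{j\in\mJ'}\frac{\Lambda^*_{r(j)}(n)}{\mu_{jr(j)}},
\]
by exactly the same bounded-perturbation telescoping you used for the first limit. Summing over $r$ gives $\pi(\{m_j>0,\ \forall j\in\mJ'\}\mid N)\to\prod_{j\in\mJ'}U^*_j(n)$, and the inclusion--exclusion then collapses to $\prod_{j\in\mJ^\circ}(1-U^*_j(n))$. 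The whole argument never needs $B(N-k)/B(N)$ for unbounded $k$; the head-of-queue trick replaces an infinite sum by a finite one.

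In short: your proposal is not wrong, but it leaves the hard step open and points toward heavy machinery. The missing idea is the combinatorial identity ``$\{m_j>0\}=\bigcup_r\{\text{head of }j\text{ has type }r\}$'' in the explicit representation, which reduces everything to ratios $B(N-n')/B(N)$ with $n'$ bounded and thereby eliminates the uniform-integrability obstacle altogether.
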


The solution of optimization problem \eqref{PF Opt}, $\Lambda^*(n)$, is known in the literature as
the \emph{proportionally-fair} allocation (see Kelly \cite{Ke97}) and, interestingly, emerged
independently as a model for the sharing of bandwidth among Internet connections (e.g., Srikant
\cite{Sr04}).
For a detailed treatment of the relationship between closed queueing networks and the
proportionally-fair allocation, see Walton \cite{Walton09}.

Theorem 1 proves that non-bottleneck queue lengths converge in distribution.  Unfortunately, this is not enough to claim that the first moments converge as well. Convergence of the first moments is important from a practical standpoint, particularly for mean value analyses. This result is demonstrated in the next theorem. 
For our closed queueing network with $n$ customers, let $M^\circ(n)\in\bZ_+^{J^\circ \times I}$ be the stationary number of customers from each route at each non-bottleneck queue. In other words, $\mbox{Pr}(M^\circ(n)=m^\circ)=\pi^\circ(m^\circ|n)$.
%
Similarly, let $M_{\Lambda}^\circ$ be the stationary number of customers from each route at each queue in the open queueing network \eqref{open ed a}, i.e., $\mbox{Pr}(M_{\Lambda}^\circ = m^\circ) =\pi^\circ_\Lambda(m^\circ)$.

\begin{theorem}\label{Thrm1_expectation}
\begin{equation}
E M^\circ(c n + u_c)   \xrightarrow[c\rightarrow\infty]{}  E M^\circ_{\Lambda^{*}(n)}
\end{equation}
where $\Lambda^{*}(n)=(\Lambda_{1}^{*}(n),\ldots,\Lambda_{I}^{*}(n))$ is the unique optimizer of \eqref{PF Opt}.
\end{theorem}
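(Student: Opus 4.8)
The plan is to upgrade the weak convergence of Theorem~\ref{Thrm1} to convergence in mean by establishing uniform integrability. Since the $(j,i)$-entry of $E M^\circ(cn+u_c)$ is $E M^\circ_{ji}(cn+u_c)$, and each $M^\circ_{ji}(cn+u_c)$ is a nonnegative integer-valued random variable that converges in distribution to $M^\circ_{\Lambda^{*}(n),ji}$ by Theorem~\ref{Thrm1}, it suffices to show that for each non-bottleneck queue $j$ and each route $i\ni j$ the family $\{M^\circ_{ji}(cn+u_c)\}_{c\in\bN}$ is uniformly integrable. It is convenient to work with the dominating quantity $r_i(m^\circ):=\sum_{j\in\mJ^\circ}m^\circ_{ji}$, the number of route-$i$ customers residing in the open part, since $M^\circ_{ji}\le r_i(M^\circ)$; the goal then becomes a uniform tail bound $\bP\big(r_i(M^\circ(cn+u_c))\ge t\big)\le Ce^{-\gamma t}$ with $C,\gamma>0$ independent of $c$ and $t$.

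The starting point is an exact decomposition of the open-part marginal. Writing $w^\circ(m^\circ)$ for the non-bottleneck factor of the weight in \eqref{eq:pi}, $B_{\bar\mJ}$ for the normalizing constant \eqref{Bn} of the closed network restricted to the bottleneck queues $\bar\mJ$, and $B^\circ$ for the analogous constant on $\mJ^\circ$, summation of \eqref{eq:pi} over the bottleneck coordinates gives
\[
\pi^\circ(m^\circ\mid N)=\frac{w^\circ(m^\circ)\,B_{\bar\mJ}(N-r(m^\circ))}{B(N)},\qquad B(N)=\sum_{0\le s\le N}B^\circ(s)\,B_{\bar\mJ}(N-s).
\]
Two structural observations drive the rest. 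First, the closed network on $\bar\mJ$ has the \emph{same} proportionally-fair throughput vector $\Lambda^{*}(n)$ as the full network: by complementary slackness in \eqref{PF Opt} the dual variable of every non-bottleneck constraint vanishes, so $\Lambda^{*}(n)$ already solves \eqref{PF Opt} with the constraints for $j\in\mJ^\circ$ deleted; hence, by Lemma~\ref{simple Lambda} applied to the $\bar\mJ$-network, $B_{\bar\mJ}(N-e_i)/B_{\bar\mJ}(N)\to\Lambda^{*}_i(n)$ and, telescoping, $B_{\bar\mJ}(N-r)\le C\,B_{\bar\mJ}(N)\prod_{i\in\mI}\Lambda^{*}_i(n)^{r_i}$ uniformly for $r$ in a window around the origin that grows with $c$. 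Second, retaining only the $s=0$ term, $B(N)\ge B_{\bar\mJ}(N)$. Combining, for $m^\circ$ with $r(m^\circ)$ in that window,
\[
\pi^\circ(m^\circ\mid N)\ \le\ C\,w^\circ(m^\circ)\prod_{i\in\mI}\Lambda^{*}_i(n)^{r_i(m^\circ)}\ =\ C\,C_{\Lambda^{*}(n)}\,\pi^\circ_{\Lambda^{*}(n)}(m^\circ),
\]
where $C_{\Lambda^{*}(n)}=\prod_{j\in\mJ^\circ}\big(1-\sum_{i:j\in i}\Lambda^{*}_i(n)/\mu_{ji}\big)^{-1}$ is finite precisely because $\sum_{i:j\in i}\Lambda^{*}_i(n)/\mu_{ji}<1$ at every non-bottleneck queue, which is also what makes $\pi^\circ_{\Lambda^{*}(n)}$ a genuine product-form law with geometrically decaying factors. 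Thus on the window $r_i(M^\circ(N))$ is stochastically dominated, up to a constant, by a fixed geometric law.

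It remains to handle the complementary event, that an order-$\Omega(c)$ number of customers sits in the open part. An efficient device here is an exponential tilt: for $\theta\ge 0$,
\[
E\big[e^{\theta r_i(M^\circ(N))}\big]=\frac{1}{B(N)}\sum_{0\le s\le N}e^{\theta s_i}\,B^\circ(s)\,B_{\bar\mJ}(N-s)=\frac{\tilde B_\theta(N)}{B(N)},
\]
where $\tilde B_\theta$ is the normalizing constant of the closed network obtained by scaling the route-$i$ service rates \emph{at non-bottleneck queues} by $e^{-\theta}$. For $\theta$ small this tilted network has exactly the same bottleneck set $\bar\mJ$ and the same proportionally-fair allocation $\Lambda^{*}(n)$ as the original one (the perturbation merely tightens already-slack non-bottleneck constraints), so $\tilde B_\theta(N)$ and $B(N)$ have the common exponential growth rate $\exp\big(c\sum_i n_i(-\log\Lambda^{*}_i(n))+o(c)\big)$; showing that their ratio stays bounded as $c\to\infty$ yields, via Markov's inequality, the desired uniform exponential tail for $r_i(M^\circ(N))$. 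Combining the two regimes, $\sup_c E\big[r_i(M^\circ(cn+u_c))\,\mathbf 1\{r_i(M^\circ(cn+u_c))>K\}\big]\to 0$ as $K\to\infty$; uniform integrability of each $M^\circ_{ji}(cn+u_c)$ follows, and together with Theorem~\ref{Thrm1} this gives $E M^\circ(cn+u_c)\to E M^\circ_{\Lambda^{*}(n)}$.

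The main obstacle is making the normalizing-constant comparisons uniform in the population size: bounding $B_{\bar\mJ}(N-r)/B_{\bar\mJ}(N)$ by $\prod_i\Lambda^{*}_i(n)^{r_i}$ up to a fixed factor over a window that grows with $c$ (which rests on a rate of convergence of the $\bar\mJ$-network throughputs to $\Lambda^{*}(n)$ that is uniform over the relevant population directions), and --- the real crux --- showing that $\tilde B_\theta(N)/B(N)$ remains bounded for small $\theta$, equivalently that $r_i(M^\circ(N))$ has uniformly exponential upper tails. At bottom this is a large-deviation statement about the normalizing constant: $B(N)$ is dominated, at exponential order, by configurations that place almost all customers at the bottleneck queues, while an $\Omega(c)$ deficit there carries an exponential penalty; establishing it draws on the normalizing-constant asymptotics developed for Theorem~\ref{Thrm1}, together with the observation that deleting the slack non-bottleneck constraints changes neither the proportionally-fair allocation $\Lambda^{*}(n)$ nor the bottleneck set.
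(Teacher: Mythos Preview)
Your route is genuinely different from the paper's. The paper never touches uniform integrability: it proves an exact identity
\[
E M^\circ_{ji}(n)=\frac{1}{\mu_{ji}}\,\frac{B^{+j}(n-e_i)}{B(n)},
\]
where $B^{+j}$ is the normalizing constant of the network with a \emph{replica} of queue $j$ inserted (obtained via the multinomial Vandermonde convolution), and then factors this ratio through $\bar B$ and applies Proposition~\ref{normalize const lemma} twice plus Lemma~\ref{2nd closed lemma} to read off the limit $\dfrac{\Lambda_i^*(n)/\mu_{ji}}{1-U_j^*(n)}=E M^\circ_{\Lambda^*(n),ji}$. This is short, elementary, and recycles exactly the convergence statements already proved for Theorem~\ref{Thrm1}.

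Your uniform-integrability plan is viable, but you have made it harder than necessary and left the crux open. The two-regime split (window versus $\Omega(c)$) is redundant: a single MGF bound $\sup_c E\big[e^{\theta r_i(M^\circ(cn+u_c))}\big]<\infty$ for some $\theta>0$ already gives uniform integrability outright. And that bound is not a large-deviations statement at all --- it follows directly from the machinery you already cite. For small $\theta$ the tilted network (non-bottleneck route-$i$ rates scaled by $e^{-\theta}$) has the same bottleneck set $\bar\mJ$ and the same optimizer $\Lambda^*(n)$, so Proposition~\ref{normalize const lemma} applied to \emph{both} networks gives
\[
\frac{\tilde B_\theta(N)}{B(N)}=\frac{\bar B(N)/B(N)}{\bar B(N)/\tilde B_\theta(N)}\ \xrightarrow[c\to\infty]{}\ \prod_{j\in\mJ^\circ}\frac{1-U_j^*(n)}{1-\tilde U_j^*(n)}<\infty,
\]
since $\bar B$ is unchanged by the tilt. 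That closes your ``real crux'' in one line. What your approach buys, once completed this way, is automatic convergence of all moments (not just the first); what the paper's replica-queue identity buys is a two-line proof that stays entirely within ratios of normalizing constants and avoids any tail analysis.
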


\subsection{Existence of fluid limits for closed queueing networks.}\label{result2}
\label{sec:fluid_limit}

In Section~\ref{result1}, we analyzed the \emph{stationary} probability distribution \eqref{eq:pi} of the
closed queueing networks under investigation in the large-population limit.
Now, we focus on the \emph{transient} probability distribution in the large-population limit and
then study the evolution in time of the system.
In other words, the limit in time is now taken after the limit in the number of customers.
In stochastic systems it is known that both limits are not interchangeable in general.
The fluid limit, see Definition~\ref{def_fluid}, is a natural framework that allows for the analysis of such scenario.

We consider a sequence of the closed queueing networks as described in Section~\ref{model}.
In this sequence, the only variables that change are the number of customers on each route (the
number of queues, routes, service distributions are kept fixed).
We let the vector $n\in\bR^I_+$ be the proportion of customers on
each route of the network. In the $c^{th}$ network of this sequence of closed queueing networks, there
are $cn+u_c$ customers on each route, where $\{u_c\}_{c\in\bN}$ is a bounded sequence of variables in
$\bR_+^I$ such that $cn+u_c\in\bZ_+^I$.
  
We let $M^c_{ji}(t)$ be the number of route-$i$ customers in queue $j$ at time $t$ of the $c^{th}$
closed queueing network. We let $\aL^c_{ji}(t)$ give the total number of route-$i$ customers
served by queue $j$ by time $t$ in the $c^{th}$ closed queueing network. From this, we define the
rescaled
processes
\begin{equation*}
 \bar{M}^c_{ji}(t)\bydef\frac{M^c_{ji}(ct)}{c}, \qquad \bar{\aL}^c_{ji}(t)\bydef
\frac{\aL^c_{ji}(ct)}{c},\qquad j\in\mJ,\; i\in\mI,\; j \in i.  
\end{equation*}

We wish to show that the vector processes $\bar{M}^c$ and $\bar{\aL}^c$ converge to a fluid
solution.

\begin{theorem}\label{main fluid theorem}
The sequence of stochastic processes $\{ (\bar{M}^c,\bar{\aL}^c\}_{c\in\bN}$ converges weakly
to a continuous process $(m,\aL)$ that satisfies the fluid solution equations \eqref{fluidequns}.
\end{theorem}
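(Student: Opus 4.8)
The plan is the classical two-step route to a fluid limit: first establish $C$-tightness of $\{(\bar M^c,\bar\aL^c)\}_{c\in\bN}$ in the Skorokhod space $D[0,\infty)$, so that subsequential weak limits exist and are almost surely continuous, and then identify every such limit point as a solution of the fluid equations \eqref{fluidequns}. I would begin by rewriting the $c$th network via the random time change (Poisson) representation: let $\{N^{(i,k)}:i\in\mI,\ 1\le k\le k_i\}$ be independent unit-rate Poisson processes and, writing $j=j^i_k$,
\begin{equation*}
\aL^c_{ji}(t)=N^{(i,k)}\!\left(\int_0^t \mu_{ji}\,\frac{M^c_{ji}(s)}{M^c_{j}(s)}\,ds\right),\qquad M^c_{j^i_k i}(t)=M^c_{j^i_k i}(0)+\aL^c_{j^i_{k-1} i}(t)-\aL^c_{j^i_k i}(t),
\end{equation*}
with the convention that the integrand vanishes when $M^c_j(s)=0$, and with $\sum_{j\in i}M^c_{ji}(t)=cn_i+(u_c)_i$ for all $t$.

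Because the route-$i$ departure rate from queue $j$ equals $\mu_{ji}M^c_{ji}/M^c_j\le \mu_{ji}$, the argument of $N^{(i,k)}$ above is bounded by $\mu_{ji}t$, so $\aL^c_{ji}$ is pointwise dominated by a Poisson process of rate $\mu_{ji}$. Combined with the functional strong law of large numbers $\sup_{u\le T}\bigl|N^{(i,k)}(cu)/c-u\bigr|\to 0$ almost surely, this yields, uniformly in $c$ and on compact time intervals, that the oscillation of $\bar\aL^c_{ji}$ over any subinterval of length $\delta$ is at most $\mu_{ji}\delta+o(1)$; by the per-queue balance relations the same bound (up to constants) holds for $\bar M^c_{ji}$, whose jumps have size $1/c$, and $\bar M^c(0)$ is bounded by conservation of customers. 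Hence $\{(\bar M^c,\bar\aL^c)\}$ enjoys compact containment together with an asymptotic equicontinuity estimate, so it is $C$-tight; by Prokhorov's theorem and the Skorokhod representation theorem we may pass to a subsequence along which $(\bar M^c,\bar\aL^c)\to(m,\aL)$ almost surely, uniformly on compacts, with $(m,\aL)$ continuous.

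It remains to verify that $(m,\aL)$ satisfies \eqref{fluidequns}. The monotonicity \eqref{fluidequns3}, the flow-balance relation \eqref{fluidequns1} and the population constraint \eqref{fluidequns5} are obtained by rescaling the exact pre-limit monotonicity and balance relations and passing to the limit. For the work-conservation bound \eqref{fluidequns2}, the functional law of large numbers gives $\bar\aL^c_{ji}(t)=\int_0^t\mu_{ji}\,\bar M^c_{ji}(u)/\bar M^c_j(u)\,du+o(1)$ uniformly on compacts, so
\begin{equation*}
\sum_{i:j\in i}\frac{1}{\mu_{ji}}\bigl(\bar\aL^c_{ji}(t)-\bar\aL^c_{ji}(s)\bigr)=\int_s^t \bI\{\bar M^c_j(u)>0\}\,du+o(1)\le t-s+o(1),
\end{equation*}
and letting $c\to\infty$ yields \eqref{fluidequns2}. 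For the processor-sharing identity \eqref{fluidequns4}, fix $t$ with $m_j(t)>0$; by continuity of $m$ there is $\eta>0$ with $m_j>0$ on $(t-\eta,t+\eta)$, hence $\bar M^c_j$ is bounded below there by a positive constant for all large $c$, and since $(a,b)\mapsto a/b$ is continuous on $\{b>0\}$ and $\bar M^c\to m$ uniformly, the identity above passes to the limit to give $\aL_{ji}(t')-\aL_{ji}(t'')=\int_{t''}^{t'}\mu_{ji}\,m_{ji}(u)/m_j(u)\,du$ for $t-\eta<t''\le t'<t+\eta$. Since $\aL_{ji}$ is Lipschitz, hence differentiable almost everywhere, and almost every $t$ with $m_j(t)>0$ lies in such an interval, $\dL_{ji}(t)=\mu_{ji}m_{ji}(t)/m_j(t)$ for almost every such $t$, which is \eqref{fluidequns4}. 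Every subsequential limit is therefore a continuous fluid solution, and tightness guarantees such limits exist; the weak convergence asserted in Theorem~\ref{main fluid theorem} follows (if one wants a unique limit it suffices to additionally assume $\bar M^c(0)$ converges, which is innocuous here).

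The step I expect to be the main obstacle is the passage to the limit inside the rate functional $\mu_{ji}\bar M^c_{ji}/\bar M^c_j$: the denominator $\bar M^c_j$ may be arbitrarily small, and the limiting profile $m_j$ may vanish on a set of times of positive measure, so $(a,b)\mapsto a/b$ is not globally continuous and a naive substitution of limits fails. What rescues the argument is twofold: the route-$i$ departure rate is bounded by $\mu_{ji}$ no matter how small $M^c_j$ is, so no blow-up occurs and $\bar\aL^c_{ji}$ remains Lipschitz; and Definition~\ref{def:fluid.sol} only imposes \eqref{fluidequns4} at times where $m_j(t)>0$ — precisely where $m_{ji}/m_j$ is continuous — while the empty periods contribute nothing to the limiting cumulative flow by virtue of \eqref{fluidequns2}. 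Carrying out the functional law of large numbers uniformly and jointly over the finitely many transition types, and controlling these idle periods, is the technical heart of the proof, and it is here that the argument parallels the fluid-limit analyses of Chen and Yao \cite{ChenYao2001}, Bramson \cite{Br96} and Walton et al. \cite{Walton09b}.
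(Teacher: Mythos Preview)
Your proposal is correct and follows the same two-step scaffold as the paper: first $C$-tightness, then identification of every subsequential limit with a solution of \eqref{fluidequns}. The identification step is essentially identical to the paper's, including the use of Skorokhod representation and bounded convergence on neighborhoods where $m_j>0$.

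Where you differ is in the probabilistic machinery used for the construction and the tightness argument. The paper builds the $c$th network from a single marked Poisson point process $N_j$ per \emph{queue} (with thinning via an auxiliary uniform mark), then decomposes $M^c_{ji}$ into a compensator $\aL_{ji}$ and a genuine martingale $\mL_{ji}$; tightness of the compensator comes from the Lipschitz bound \eqref{Lambda.Lipschitz}, while the martingale part is dispatched by Doob's $L^2$ inequality and an explicit variance estimate. You instead use the random-time-change representation with one unit-rate Poisson process per \emph{(route,queue)} pair and appeal directly to the functional strong law of large numbers for Poisson processes, which simultaneously controls oscillation (yielding tightness) and gives the uniform approximation $\bar\aL^c_{ji}(t)=\int_0^t\mu_{ji}\bar M^c_{ji}/\bar M^c_j\,du+o(1)$ needed for identification. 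Both constructions are standard and equivalent here; yours is marginally more elementary in that it bypasses the explicit martingale calculus, while the paper's martingale decomposition makes the error terms more transparent and would extend more readily if one wanted a diffusion rather than a fluid limit. Your closing remark on uniqueness is apt: the paper, like you, really proves only that the sequence is tight and that every limit point is a fluid solution, without asserting uniqueness of that solution.
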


\subsection{Convergence of the fluid solution.}\label{result3}

Having established the existence of a fluid solution of our closed queueing
networks, $(m,\aL)$,  now our goal is to study its evolution in the long-term, i.e., $m(t)$ when $t\to\infty$.
In our main result, we show that the amount of fluid $m$ eventually concentrates on the bottleneck
queues only and that the long-term proportions of fluid in each route and in each queue solve the
dual of optimization problem \eqref{PF Opt}.

From the stationary distribution $\pi(m|n)$, given by \eqref{eq:pi}, and under the same premise of
Theorem~\ref{Thrm1}, we can show that
\begin{equation*}
\lim_{c\rightarrow\infty} \frac{1}{c}\log \pi(cm|cn) = -\beta(m) 
\end{equation*}
where 
\begin{align}
\label{eq:def_beta}
\beta(m) &\bydef \sum_{j\in\mJ} \sum_{\substack{i: j\in i\\ m_{ji}>0 } } m_{ji} \log \frac{m_{ji}\mu_{ji}}{m_{j}}\\
 &= \sum_{\substack{j\in \mJ:\\ m_j>0}} m_{j} \sum_{i: j\in i} p_{ji} \log
\frac{p_{ji}}{\mu^{-1}_{ij}}.\label{betalambda}
\end{align} 
Here, we have used the notation $p_{ji} \bydef m_{ji}/m_j$.

\begin{remark}[Relative Entropy]\label{relative entropy remark}
A key quantity that will be useful in our proofs is the unnormalized relative entropy%
\footnote{This entropy is unnormalized because we do not enforce the condition that its arguments
are probability distributions.}
\begin{equation*}
D(p||q)\bydef\sum_{i} p_i \log \frac{p_i}{q_i},\qquad p,q\in\bR_+^I.
\end{equation*}
We notice that the function $\beta$, given by \eqref{betalambda}, is a linear combination of these
unnormalized relative entropies for each queue. Furthermore, we note that if $\sum_i p_i=\sum_i
q_i$, then we can renormalize and treat $p$ and $q$ as probability distributions.
In this case, one can show that $D(p||q)\geq 0$, using Jensen's inequality, and that $D(p||q)$ is minimized if
$p=q$ where its value is $0$.
\end{remark}

\begin{remark}
It is an interesting observation that the rate of decrease of $\beta(m)$, the (negative) entropy of states,
will be determined by the relative entropy between the rates of service; 
see Proposition~\ref{diff prop} below. Although distinct, this argument is similar to those argued
for Markov processes by Spitzer \cite{spitzer1971random} and more recently by Dupuis and Fischer
\cite{DuFi12}. A further important reference is Bramson~\cite{Br96}, which shows, for the open
versions of the closed queueing networks considered in this paper, that the amount of fluid in
non-bottlenecks converges to zero.

In Bramson \cite{Br96}, the main argument behind the fluid convergence to zero for non-bottleneck
stations relies on \emph{closing} the open network with an additional (artificial) queue whose
service rate equals the overall throughput of the network. All customer routes enter this additional
queue and it is assumed that this queue never empties. This plays a significant role in regulating
traffic to match that of the open network. So although a closed queueing network appears in that
analysis, this additional queue introduces an important loss of generality.
\end{remark}

The function $\beta(m)$ forms a natural candidate for a Lyapunov function. We need to show
that, for $n\in\bR_+^I$ fixed, $\beta(m(t))$ decreases to its minimal value
\begin{equation}\label{beta star}
\beta^*\bydef\text{minimize}\quad \beta(m) \quad \text{subject to}\quad \sum_{j\in i}
m_{ji}=n_i,\quad
i\in\mI.
\end{equation}
We show in Lemma \ref{duality-lemma} that the above optimization problem is the dual of the problem
\eqref{PF Opt}, and considering the set of points attaining this minimal value
\begin{equation}
\mathcal{M}(n) \bydef\text{argmin}\quad \beta(m) \quad \text{subject to}\quad \sum_{j\in i}
m_{ji}=n_i,\quad
i\in\mI,
\end{equation}
we have that when the network is in one of these states the corresponding throughputs are uniquely given by
the $\Lambda^*(n)$ vector given in Theorem \ref{Thrm1}.

The following theorem shows how fluid eventually distributes among network stations.

\begin{theorem}\label{Thrm3}
Let $(m(t), \aL(t))$ be a solution of the fluid model \eqref{fluidequns} with $m(0)$ satisfying
equation \eqref{fluidequns5} for a given vector $n$, then,
\begin{equation*}
 \beta(m(t))\searrow \beta^*,\qquad \text{as}\qquad t\rightarrow\infty
\end{equation*}
and, moreover,
\begin{equation*}
 \min_{m^*\in\mathcal{M}(n)} | m(t)- m^*| \rightarrow 0,\qquad \text{as}\qquad t\rightarrow\infty \,
\end{equation*}
where the fluid point $m^*\in\mathcal{M}(n)$ induces, via equation \eqref{fluidequns4}, the same throughputs 
$\Lambda^*(n)$ given in Theorem \ref{Thrm1}.
\end{theorem}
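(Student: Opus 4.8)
The plan is to treat $\beta$ as a Lyapunov function and run a LaSalle-type argument: show that $\beta(m(t))$ is non-increasing with a dissipation rate that is a relative entropy, deduce that it converges, and then identify the limit with $\beta^{*}$ and the $\omega$-limit set of the trajectory with $\mathcal{M}(n)$. The first thing I would check is that $t\mapsto\beta(m(t))$ is absolutely continuous. Each summand of $\beta$ has the form $\phi(m_{ji}(t))-\phi(m_j(t))+m_{ji}(t)\log\mu_{ji}$ with $\phi(x)=x\log x$; here $\phi$ is continuous on $[0,\infty)$ and $C^{1}$ with locally integrable derivative on $(0,\infty)$, while $m_{ji}$ and $m_j$ are Lipschitz by \eqref{fluidequns1}--\eqref{fluidequns2}. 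Since a nonnegative Lipschitz function has a.e.\ vanishing derivative on its zero set, the chain rule $\tfrac{d}{dt}\phi(m_{ji}(t))=\phi'(m_{ji}(t))\dot m_{ji}(t)$ holds for a.e.\ $t$ and is integrable; this is exactly the regularity bookkeeping used by Bramson \cite{Br96}. It gives, for a.e.\ $t$,
\[
\frac{d}{dt}\beta(m(t))=\sum_{j\in\mJ}\sum_{i:j\in i}\log\!\Big(\frac{m_{ji}(t)\,\mu_{ji}}{m_j(t)}\Big)\,\dot m_{ji}(t),
\]
the terms with $m_{ji}(t)=0$ read as $0$.

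The crux is the computation behind Proposition~\ref{diff prop}. Substituting $\dot m_{j^i_k i}=\dL_{j^i_{k-1}i}-\dL_{j^i_k i}$ from \eqref{fluidequns1} and $\dL_{ji}=(m_{ji}/m_j)\mu_{ji}$ from \eqref{fluidequns4} at the non-empty queues, then telescoping around each route cycle (empty queues merely relay fluid, since $m_j(t)=0$ forces their in- and out-flows to coincide), I expect to arrive at
\[
\frac{d}{dt}\beta(m(t))=\sum_{i\in\mI}\sum_{k}\ell^{i}_{k}(t)\,\log\frac{\ell^{i}_{k'}(t)}{\ell^{i}_{k}(t)},
\]
where $\ell^{i}_{k}(t)$ is the instantaneous route-$i$ throughput at the $k$-th non-empty queue of route $i$ and $k'$ indexes the next such queue. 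Applying $\log x\le x-1$ to each ratio gives $\tfrac{d}{dt}\beta(m(t))\le 0$ — the relative-entropy dissipation of Remark~\ref{relative entropy remark} — with equality iff, on every route, the throughput is constant across its non-empty queues. I would then convert this balance, using the capacity bound \eqref{fluidequns2} (which makes the induced $\Lambda$ feasible for \eqref{PF Opt}) and the conservation law \eqref{fluidequns5} (which gives $n_i/\Lambda_i=\sum_{j\in i}m_j/\mu_{ji}$, i.e.\ valid KKT multipliers $y_j=m_j$ for \eqref{PF Opt}), into the statement that $\tfrac{d}{dt}\beta(m(t))=0$ iff $m(t)\in\mathcal{M}(n)$, with induced throughputs equal to the unique optimizer $\Lambda^{*}(n)$ (Theorem~\ref{Thrm1}, Lemma~\ref{duality-lemma}).

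With this in hand, $\beta(m(t))$ is non-increasing and bounded below by $\beta^{*}$ (as $m(t)$ is feasible for \eqref{beta star} by \eqref{fluidequns5}), so $\beta(m(t))\searrow\beta^{\infty}$ for some $\beta^{\infty}\ge\beta^{*}$; it remains to prove $\beta^{\infty}=\beta^{*}$ and convergence to $\mathcal{M}(n)$. The trajectory lies in the compact set $K=\{m\ge 0:\sum_{j\in i}m_{ji}=n_i,\ i\in\mI\}$, so its $\omega$-limit set $\Omega$ is non-empty. For $\bar m\in\Omega$ with $m(t_k)\to\bar m$, $t_k\to\infty$, I would use the uniform Lipschitz bounds on $m$ and on the re-centred flows $\dL(\cdot+t_k)-\dL(t_k)$ together with Arzel\`a--Ascoli to extract a subsequence along which $(m(\cdot+t_k),\dL(\cdot+t_k)-\dL(t_k))$ converges uniformly on compacts to a pair $(\tilde m,\tilde\dL)$, which — by the same closedness of the conditions \eqref{fluidequns} under such limits that underlies Theorem~\ref{main fluid theorem} — is again a fluid solution with $\tilde m(0)=\bar m$. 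Monotonicity forces $\beta(\tilde m(\cdot))\equiv\beta^{\infty}$, hence $\tfrac{d}{ds}\beta(\tilde m(s))=0$ a.e., so $\tilde m(s)\in\mathcal{M}(n)$ for a.e.\ $s$ by Proposition~\ref{diff prop}, and $\bar m=\tilde m(0)\in\mathcal{M}(n)$ by continuity. Thus $\Omega\subseteq\mathcal{M}(n)$ and $\beta^{\infty}=\beta(\bar m)=\beta^{*}$; and if $\min_{m^{*}\in\mathcal{M}(n)}|m(t)-m^{*}|$ did not tend to $0$, a limit-point argument would contradict $\Omega\subseteq\mathcal{M}(n)$. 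The final assertion about $\Lambda^{*}(n)$ is then the equality case of Proposition~\ref{diff prop}.

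The hard part is this last step. It requires showing that the $\omega$-limit set carries an entire fluid trajectory through each of its points, i.e.\ that the defining conditions \eqref{fluidequns} — in particular the processor-sharing condition \eqref{fluidequns4}, which constrains derivatives — are closed under uniform-on-compacts limits and time shifts; and it requires the non-smooth accounting for empty ``pass-through'' queues in Proposition~\ref{diff prop} together with the verification that zero entropy dissipation characterizes exactly the minimizer set $\mathcal{M}(n)$. The absolute-continuity and telescoping steps, by contrast, I expect to be routine.
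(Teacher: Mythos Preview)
Your argument is correct but takes a genuinely different route from the paper. You run a LaSalle invariance argument: shift the trajectory by $t_k\to\infty$, extract a limit via Arzel\`a--Ascoli, verify the limit is again a fluid solution, and conclude from $\beta\equiv\beta^\infty$ along the limit that its states lie in $\mathcal M(n)$ (by your KKT identification of the zero-dissipation set). The paper instead strengthens the dissipation inequality quantitatively: after Proposition~\ref{diff prop} it applies Pinsker's inequality to obtain $\tfrac{d}{dt}\beta(m(t))\le -\tfrac{1}{|\mJ|\mu_{\max}}\sum_{i}\sum_{j\in i}(\dL_{ji}-\dL_{x_{ji}i})^2$, and then proves a state-space compactness lemma (Lemma~\ref{flow bound}) showing that whenever $\beta(m)\ge\beta^*+\epsilon$ some throughput gap $|\dL_{ji}-\dL_{x_{ji}i}|$ is bounded below by a fixed $\delta_\epsilon>0$. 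Together these give a linear-in-time decrease of $\beta$ on each super-level set, forcing $\beta(m(t))\searrow\beta^*$; convergence to $\mathcal M(n)$ then follows by a short continuity-and-compactness argument on states. The tradeoff: your approach is the cleaner dynamical-systems argument and avoids Pinsker, but it needs the closure of the fluid equations \eqref{fluidequns} (in particular \eqref{fluidequns4}) under uniform-on-compacts limits of time shifts, which you rightly flag as the delicate step; the paper's approach sidesteps that entirely by working only with a single fixed fluid solution and compactness in the \emph{state} space, and as a byproduct yields a quantitative decay rate on level sets. Interestingly, the paper's proof of Lemma~\ref{flow bound} performs, in miniature, the same weak-duality identification you carry out via KKT to characterize the zero-dissipation states.
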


In the above theorem, the norm $|m|$ is the Euclidean norm in $\bR^{\mI\times \mJ}$.
\section{Proofs of main results.}\label{proofs}
We now focus on proving the main results of this paper, namely, Theorems~\ref{Thrm1} and~\ref{Thrm1_expectation} (Section \ref{proof1}), Theorem~\ref{main fluid theorem} (Section \ref{proof2}) and Theorem~\ref{Thrm3} (Section \ref{proof3}).

\subsection{Analysis of non-bottleneck queues.}\label{proof1}

First, we develop a proof of Theorem \ref{Thrm1}. Recalling the informal statement
(\ref{informal statement}), we need to verify that for each $n\in\bR_+^I,
m\in\bZ_+^{J^\circ \times I}$ 
\begin{equation}\label{informal statement2}
\pi^\circ(m^\circ| cn) \xrightarrow[c\rightarrow\infty]{}
\pi_{\Lambda^*}^\circ(m^\circ),
\end{equation}
for some $\Lambda^*\in\bR_+^I$ for some set
of non-bottleneck queues $\mJ^\circ$.
Before verifying such statement, we must identify the relevant
throughput vector $\Lambda$ and non-bottleneck queues ${J}^\circ$. The following
result, which was proven by Walton \cite{Walton09}, characterizes $\Lambda^*$ and $\mJ^\circ$. 
\begin{proposition}[\cite{Walton09}]\label{conv prop}
\begin{equation*}
 \Lambda_{i}^{*}(n)=\lim_{c\to\infty} \Lambda_{i}(c n + u_c),\qquad i\in\mI
\end{equation*}
where $\Lambda_{i}^{*}(n)$ is the unique minimizer of
the concave optimization problem \eqref{PF Opt} and so that $\Lambda_{i}(c n + u_c)$ is defined on a point in its domain, $\{u_c\}_{c\in\bN}$ is any
bounded sequence in $\bR_+^I$ such that $c n + u_c\in\bZ_+^I$.

Consequently, $j\in {\mJ}^\circ$ if and only if $$\sum_{i: j\in i} \frac{\Lambda_{i}^{*}(n)}{\mu_{ji}} < 1.$$
\end{proposition}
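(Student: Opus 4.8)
\emph{Proof plan.} The plan is to treat the throughput convergence $\Lambda_i(cn+u_c)\to\Lambda_i^*(n)$ as the real content: once it is in hand the description of $\mJ^\circ$ is immediate, since $\sum_{i:j\in i}\Lambda_i(cn+u_c)/\mu_{ji}=U_j(cn+u_c)$ converges to $\sum_{i:j\in i}\Lambda_i^*(n)/\mu_{ji}$, so $j$ is a non-bottleneck precisely when this limiting load is $<1$. I would first discard routes with $n_i=0$ (there $\Lambda_i\equiv0=\Lambda_i^*(n)$ trivially) and assume $n_i>0$ for all $i$; write $N^{(c)}:=cn+u_c$ and $\rho_j(\zeta):=\sum_{i:j\in i}\zeta_i/\mu_{ji}$, and let $\mathcal D:=\{\zeta\in\bR_+^I:\rho_j(\zeta)<1\ \forall j\in\mJ\}$ with compact closure $\overline{\mathcal D}$. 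Since $\Lambda\mapsto\sum_i n_i\log\Lambda_i$ is strictly concave on $\overline{\mathcal D}$ and blows up to $-\infty$ on $\{\exists i:\Lambda_i=0\}$, problem \eqref{PF Opt} has a unique maximiser $\Lambda^*(n)\in\overline{\mathcal D}$, and its active constraints ($\rho_j(\Lambda^*(n))=1$) are, by Definition~\ref{eq:bdef}, exactly the bottlenecks; as an a priori sanity check, $\Lambda(N)\in\overline{\mathcal D}$ for every $N$, because $\rho_j(\Lambda(N))=U_j(N)\le1$ is a fraction of time.

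\emph{Key representation.} The heart of the argument is to realise the closed-network law \eqref{eq:pi} as a conditioned ``free'' law. For any $\zeta\in\mathcal D\cap(0,\infty)^I$ I would take $(\widehat m_{ji})_{j\in\mJ,\,i\ni j}$ independent across queues, where at queue $j$ the total $\widehat m_j$ is geometric with parameter $1-\rho_j(\zeta)$ and, given $\widehat m_j$, the route split is multinomial with cell probabilities $(\zeta_i/\mu_{ji})/\rho_j(\zeta)$. A short computation (the $\rho_j^{\widehat m_j}$ factors cancel) shows the joint law of $(\widehat m_{ji})$ equals a $\zeta$-dependent constant times $\prod_j\binom{\widehat m_j}{\widehat m_{ji}:i\ni j}\prod_{i:j\in i}\mu_{ji}^{-\widehat m_{ji}}$ times $\prod_i\zeta_i^{S_i}$, with $S_i:=\sum_j\widehat m_{ji}$; hence conditioning on $\{S=N\}$ reproduces $\pi(\cdot\,|\,N)$, and matching normalising constants gives $B(N)=\mathbb{P}_\zeta(S=N)\big/\big(\prod_j(1-\rho_j(\zeta))\,\zeta^{N}\big)$. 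With Lemma~\ref{simple Lambda} this yields, for every such $\zeta$, $\Lambda_i(N)=\zeta_i\,\mathbb{P}_\zeta(S=N-e_i)\big/\mathbb{P}_\zeta(S=N)$, where $S$ is a sum of $|\mJ|$ independent $\bZ_+^I$-valued vectors and $\mathbb{E}_\zeta[S_i]=\sum_{j:j\in i}(\zeta_i/\mu_{ji})/(1-\rho_j(\zeta))$.

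\emph{Tilting and the limit theorem.} Next I would pick $\zeta=\zeta^{(c)}$ making the tilted mean match the population, $\mathbb{E}_{\zeta^{(c)}}[S]=N^{(c)}$ (well posed for large $c$, $\zeta\mapsto\mathbb{E}_\zeta[S]$ being a diffeomorphism onto $(0,\infty)^I$ by standard exponential-family facts). Because $N^{(c)}_i\sim c\,n_i\to\infty$ while $\sum_{j\ni i}(\zeta_i/\mu_{ji})/(1-\rho_j(\zeta))$ stays bounded as long as all $\rho_j(\zeta)$ stay away from $1$, the parameters $\rho_j(\zeta^{(c)})$ must approach $1$ along the routes through the congested queues; solving $\mathbb{E}_{\zeta^{(c)}}[S]=cn+O(1)$ to leading order and comparing with the KKT system of \eqref{PF Opt}—the multiplier of the constraint at $j$ being the limit of $\big(c(1-\rho_j(\zeta^{(c)}))\big)^{-1}$—identifies $\zeta^{(c)}\to\Lambda^*(n)$, with $\rho_j(\zeta^{(c)})\to1$ precisely for $j\in\bar{\mJ}$. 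It then remains to show $\mathbb{P}_{\zeta^{(c)}}(S=N^{(c)}-e_i)/\mathbb{P}_{\zeta^{(c)}}(S=N^{(c)})\to1$ by a local limit theorem for $S$ centred at its mean; granting this, $\Lambda_i(N^{(c)})=\zeta^{(c)}_i(1+o(1))\to\Lambda_i^*(n)$, which is the claim.

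\emph{Main obstacle.} The hard part is exactly this last local limit theorem. The tilt $\zeta^{(c)}$ is forced onto the boundary $\partial\mathcal D$, so a bounded number of the independent summands of $S$—the geometric totals of the bottleneck queues—have means and variances of order $c$, of the same order as the whole sum; one must nonetheless prove that the lattice density of $S$ is slowly varying on unit scale near its mean, uniformly in $c$. Equivalently, one needs a multivariate singularity/Tauberian analysis of the generating function $G(z)=\prod_{j\in\mJ}(1-\rho_j(z))^{-1}$ of $B$ at the singular point $\Lambda^*(n)$, which—again because of the bottlenecks—sits on the boundary of the polydisc of convergence, so standard saddle-point estimates do not apply. (The cruder statement $\tfrac1c\log B(N^{(c)})\to-\sum_i n_i\log\Lambda_i^*(n)$ is easy—upper bound from $B(N)\le G(\zeta)\zeta^{-N}$ for $\zeta\in\mathcal D$, lower bound from a single Stirling-estimated summand of \eqref{Bn} together with the duality of \eqref{beta star} with \eqref{PF Opt}—but it is far too weak here, yielding only $\tfrac1c\log\Lambda_i(N^{(c)})\to0$.) The bounded shifts $u_c$ and an irrational $n$ need no new idea, only uniformity of the error estimates.
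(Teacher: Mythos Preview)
The paper does not contain a proof of this proposition. It is stated with the attribution ``which was proven by Walton \cite{Walton09}'' and is used throughout Section~\ref{proof1} as a black box; there is nothing in the present paper to compare your attempt against. If you want to see how the argument is actually carried out, you must go to \cite{Walton09}.

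On the substance of your plan: the exponential-tilting representation and the identification $\zeta^{(c)}\to\Lambda^*(n)$ are sound. Any subsequential limit $\zeta^*\in\overline{\mathcal D}$ of the tilts, together with the limits $y_j:=\lim_c\big(c(1-\rho_j(\zeta^{(c)}))\big)^{-1}\in[0,\infty]$, satisfies the stationarity condition $n_i/\zeta^*_i=\sum_{j\ni i} y_j/\mu_{ji}$ and complementary slackness $y_j\,(1-\rho_j(\zeta^*))=0$, which pins down $\zeta^*=\Lambda^*(n)$. But you yourself flag that the proof is not finished: the ratio $\mathbb P_{\zeta^{(c)}}(S=N^{(c)}-e_i)/\mathbb P_{\zeta^{(c)}}(S=N^{(c)})\to 1$ is precisely the delicate step. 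One small correction to your diagnosis: at a bottleneck queue the geometric total has variance of order $c^2$, not $c$ (since $1-\rho_j\sim a_j/c$ gives variance $\sim\rho_j/(1-\rho_j)^2\sim c^2/a_j^2$), so the standard deviation is of the same order as the mean; the local density of $S$ then lives on scale $c$ in the bottleneck directions and on scale $O(1)$ in the non-bottleneck directions, and showing it varies slowly on unit steps requires a genuine multivariate local-limit or singularity analysis that you have not supplied. As written, the proposal is a plan with its main gap honestly labelled, not a proof.
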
 

Now, we show that \eqref{formal
statement} holds. Before proceeding with this proof, we introduce a specific closed queueing
network, which will helps us in the proof. Recall the closed queueing network
defined on set $\mJ$ introduced in Section~\ref{model}. Consider a queueing network defined exactly
as in Section~\ref{model}, except that queues $\mJ^\circ$ are removed. The resulting queueing
network has
states $\bar{m}=(\bar{m}_{ji}: j\in\bar{\mJ}, i\in\mI,
j\in i)\in\bZ_+^{\bar{J}\times I}$; if there are $n\in\bZ_+^I$ customers on each route, this
network has state space $\bar{\mS}(n)=\{\bar{m}:\sum_j \bar{m}_{ji}=n_i, i\in\mI\}$, stationary distribution
\begin{align}
\bar{\pi}(\bar{m}|n) &=\dfrac{1}{\bar{B}(n)}\prod_{j\in\bar{\mJ}} \left(\binom{\bar{m}_j}{\bar{m}_{ji}:j\in i} \prod_{i: j\in
i}\mu_{ji}^{-\bar{m}_{ji}}\right),\qquad \bar{m}\in\bar{\mS}(n),
\end{align}
where
\begin{equation*}
\label{eq:barB}
 \bar{B}(n)=\sum_{\bar{m}\in\bar{\mS}(n)} \prod_{j\in\bar{\mJ}} \left(
\binom{\bar{m}_j}{\bar{m}_{ji}:j\in i} \prod_{i: j\in
i}\mu_{ji}^{-\bar{m}_{ji}}\right),
\end{equation*}
and stationary throughput
\begin{equation*}
\bar{\Lambda}(n)=\frac{\bar{B}(n-e_i)}{\bar{B}(n)}.
\end{equation*}
Proposition \ref{conv prop} holds for this network and, here, we would consider
$\bar{\Lambda}^*(n)$, the solution to the optimization
\begin{equation*}
 \text{maximize} \quad  \sum_{i\in\mI} n_i \log \Lambda_i\quad \text{subject to} \quad 
\sum_{i:j\in i} \Lambda_i / \mu_{ji} \le 1,\quad j\in\bar{\mJ} \quad \text{over} \quad \Lambda_i\geq
0,\quad i\in\mI.
\end{equation*}
In this optimization, all constraints that are not relevant to our solution $\Lambda^*(n)$ are removed.
Thus, it is not surprising that the following lemma holds.
\begin{lemma}\label{rate lemma}
$\bar{\Lambda}^*(n)=\Lambda^*(n).$
\end{lemma}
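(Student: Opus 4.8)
The plan is to recognise that the optimization defining $\bar\Lambda^*(n)$ is precisely the \emph{relaxation} of \eqref{PF Opt} obtained by discarding the constraints \eqref{PF Opt 2} indexed by the non-bottleneck queues $j\in\mJ^\circ$. Since dropping constraints only enlarges the feasible set, $\Lambda^*(n)$ is automatically feasible for the relaxed problem, and the optimal value of the relaxed problem is at least $\sum_{i}n_i\log\Lambda_i^*(n)$. It therefore suffices to show that $\Lambda^*(n)$ is in fact \emph{optimal} for the relaxed problem; uniqueness of its maximizer (which holds for the same reason it holds for \eqref{PF Opt}: the objective $\sum_i n_i\log\Lambda_i$ is strictly concave in the relevant coordinates, and the reduced network satisfies the hypotheses of Proposition~\ref{conv prop}) then forces $\bar\Lambda^*(n)=\Lambda^*(n)$.

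To see that $\Lambda^*(n)$ maximizes the relaxed problem I would argue by contradiction. Suppose some $\Lambda'$, feasible for the relaxation, satisfies $\sum_i n_i\log\Lambda'_i>\sum_i n_i\log\Lambda_i^*(n)$, and consider the segment $\Lambda_t=(1-t)\Lambda^*(n)+t\Lambda'$, $t\in[0,1]$. By concavity of the logarithm, $\sum_i n_i\log\Lambda_{t,i}\ge(1-t)\sum_i n_i\log\Lambda_i^*(n)+t\sum_i n_i\log\Lambda'_i>\sum_i n_i\log\Lambda_i^*(n)$ for every $t\in(0,1)$. Moreover $\Lambda_t$ is feasible for the full problem \eqref{PF Opt} whenever $t$ is small: the bottleneck constraints, those indexed by $j\in\bar\mJ$, hold at both endpoints and so hold along the convex combination; and for each non-bottleneck $j\in\mJ^\circ$ the constraint $\sum_{i:j\in i}\Lambda_i^*(n)/\mu_{ji}<1$ is \emph{strict} at $t=0$ by the very definition of $\mJ^\circ$ in Theorem~\ref{Thrm1} (equivalently, Definition~\ref{eq:bdef} via Proposition~\ref{conv prop}), hence remains valid for all sufficiently small $t>0$ by continuity. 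Thus for small $t>0$ the point $\Lambda_t$ is feasible for \eqref{PF Opt} and strictly improves on $\Lambda^*(n)$, contradicting the optimality of $\Lambda^*(n)$ for \eqref{PF Opt}.

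An equivalent and slightly slicker route is through the KKT conditions: at $\Lambda^*(n)$ the multipliers attached to the non-bottleneck constraints vanish by complementary slackness, so the KKT system of the relaxed problem is already solved by $\Lambda^*(n)$ together with the surviving (bottleneck) multipliers; since all constraints are linear, this certifies global optimality for the relaxation, and uniqueness again gives $\bar\Lambda^*(n)=\Lambda^*(n)$. Either way the only point that needs a little care — and it is the one place where the hypothesis enters — is the feasibility of the perturbed point for \eqref{PF Opt}, which rests entirely on the strictness of the non-bottleneck constraints at $\Lambda^*(n)$; beyond this there is no real obstacle, in keeping with the remark preceding the statement that the lemma is "not surprising".
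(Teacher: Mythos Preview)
Your proof is correct and follows essentially the same approach as the paper's. Both arguments hinge on the single substantive observation that the non-bottleneck constraints are strictly inactive at $\Lambda^*(n)$, so one can move from $\Lambda^*(n)$ toward any point feasible for the relaxed problem while remaining feasible for the full problem \eqref{PF Opt}; the paper phrases this via the first-order optimality condition (the directional derivative of $G_n$ at $\Lambda^*(n)$ in the direction $\bar\Lambda^*(n)-\Lambda^*(n)$ is nonpositive, and concavity then gives $G_n(\bar\Lambda^*(n))\le G_n(\Lambda^*(n))$), whereas you phrase it as a direct contradiction along the segment $\Lambda_t$, but these are equivalent formulations of the same convex-analytic fact. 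Your alternative KKT remark is also valid and amounts to the same thing once more.
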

We prove Lemma \ref{rate lemma} in Appendix \ref{appendix1}. A direct consequence of Lemma
\ref{rate lemma}
and Proposition \ref{conv prop} is the following.
\begin{lemma}\label{2nd closed lemma}
 $$\bar{\Lambda}_i(n)=
\frac{\bar{B}(n-e_i)}{\bar{B}(n)}\xrightarrow[c\rightarrow\infty]{}{\Lambda}^*_i(n),\qquad i\in\mI.
$$
\end{lemma}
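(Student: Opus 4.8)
The statement should be read with $c n + u_c$ in place of $n$ on the left-hand side, i.e.\ the claim to prove is $\bar{\Lambda}_i(c n + u_c) = \bar{B}(c n + u_c - e_i)/\bar{B}(c n + u_c) \to \Lambda_i^*(n)$ as $c\to\infty$. The plan is short: apply Proposition~\ref{conv prop} to the reduced closed queueing network on $\bar{\mJ}$, and then rewrite the resulting limit using Lemma~\ref{rate lemma}.

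First I would check that the reduced network is a bona fide closed queueing network of the type treated in Section~\ref{model}, so that Proposition~\ref{conv prop} applies to it verbatim. Deleting the non-bottleneck queues from every route leaves the exponential service assumptions, the processor-sharing discipline and the cyclic routing structure intact; the state space $\bar{\mS}(c n + u_c)$ is again finite and irreducible, the stationary law is the product form $\bar{\pi}$, and the per-route throughput equals $\bar{B}(\cdot-e_i)/\bar{B}(\cdot)$ by Lemma~\ref{simple Lambda}. The one point that needs an argument is that each route still visits at least one queue: if some route $i$ with $n_i>0$ had $i\cap\bar{\mJ}=\emptyset$, then at every $j\in i$ the constraint $\sum_{i':j\in i'}\Lambda^*_{i'}(n)/\mu_{ji'}<1$ would be slack, so one could increase $\Lambda_i$ slightly, keep \eqref{PF Opt 2} satisfied and strictly raise the objective \eqref{PF Opt 1}, contradicting optimality of $\Lambda^*(n)$; hence every route meets a bottleneck and the reduced network is well defined.

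Then I would invoke Proposition~\ref{conv prop} for this reduced network, which gives $\bar{\Lambda}_i(c n + u_c)\to\bar{\Lambda}_i^*(n)$, where $\bar{\Lambda}^*(n)$ is the unique optimizer of the reduced program (the same as \eqref{PF Opt} but with \eqref{PF Opt 2} imposed only for $j\in\bar{\mJ}$). Finally, Lemma~\ref{rate lemma} identifies $\bar{\Lambda}^*(n)$ with $\Lambda^*(n)$, and substituting this into the previous limit yields $\bar{\Lambda}_i(c n + u_c)\to\Lambda_i^*(n)$, as required.

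I do not expect a genuine obstacle here: the lemma is a formal corollary of two facts already in hand, and the real content — equality of the optimizers of the full and the reduced programs — is isolated in Lemma~\ref{rate lemma}, which is proved separately in Appendix~\ref{appendix1}. The only mild point requiring care, handled above, is verifying that removing the non-bottleneck queues does not destroy the closed-network structure presupposed by Proposition~\ref{conv prop}.
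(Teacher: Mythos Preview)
Your proposal is correct and matches the paper's approach exactly: the paper presents this lemma as a direct consequence of Lemma~\ref{rate lemma} and Proposition~\ref{conv prop}, which is precisely what you do. Your additional verification that every route meets at least one bottleneck (so the reduced network is well defined) is a sensible point of rigor that the paper leaves implicit.
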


We can now proceed to demonstrate that \eqref{formal statement} holds. Let us consider the
equilibrium distribution $\pi^\circ (m^\circ|n)$. We have
\begin{align}
 \pi^\circ (m^\circ|n)&= \sum_{\substack{\bar{m}\in\bZ_+^{\bar{\mJ}}:\\(\bar{m},m^\circ)\in\mS(n)}}
\pi((\bar{m},m^\circ)|n)\notag\\
&= \dfrac{1}{B(n)}\times \prod_{j\in\mJ^\circ} \left(\binom{m^\circ_j}{m^\circ_{ji}:j\in
i} \prod_{i: j\in
i}\mu_{ji}^{-m^\circ_{ji}}\right)\times
\sum_{\substack{\bar{m}\in\bZ_+^{\bar{\mJ}}:\\(\bar{m},m^\circ)\in\mS(n)}}\prod_{j\in\mJ}
\left(\binom{\bar{m}_j}{\bar{m}_{ji}:j\in i} \prod_{i: j\in
i}\mu_{ji}^{-\bar{m}_{ji}}\right)\notag\\
&= \dfrac{1}{B(n)}\times \prod_{j\in\mJ^\circ} \left(\binom{m^\circ_j}{m^\circ_{ji}:j\in
i} \prod_{i: j\in
i}\mu_{ji}^{-m^\circ_{ji}}\right)\times
\sum_{\substack{\bar{m}\in\bar{\mS}(n-n^\circ)}}\prod_{j\in\mJ}
\left(\binom{\bar{m}_j}{\bar{m}_{ji}:j\in i} \prod_{i: j\in
i}\mu_{ji}^{-\bar{m}_{ji}}\right)\notag\\
&= \underbrace{\dfrac{\bar{B}(n)}{B(n)}}_{\text{(c)}}\times
\underbrace{\frac{\bar{B}(n-n^\circ)}{\bar{B}(n)}}_{\text{(b)}} \times
\underbrace{\prod_{j\in\mJ^\circ} \left(\binom{m^\circ_j}{m^\circ_{ji}:j\in i} \prod_{i: j\in
i}\mu_{ji}^{-m^\circ_{ji}}\right)}_{\text{(a)}}.\label{3terms}
\end{align}
Here, $n^\circ=(n^\circ_i:i\in\mI)$ is the number of route-$i$ customers in
non-bottleneck queues, i.e. $n^\circ_i=\sum_{j\in i\cap \mJ^\circ} m^\circ_{ji}$. 
The third equality above follows by observing that the summation is over all the states where
the number of non-bottleneck customers is $n-n^\circ$, whch then gives $B(n-n^\circ)$.

We now consider how the terms (a), (b) and (c), converge as we keep
$m^\circ$ fixed and let $n$ increase.
Term (a) is easily dealt with as it does not depend on~$n$.
Term (b) will be shown to converge in the next proposition.
Subsequently, term (c) will take a more in depth analysis.

Term (a) represents the correct expression for the unnormalized stationary distribution of our open queueing network (see \eqref{open ed}),
except that we do not include the multiplicative term
\begin{equation*}
 \prod_{j\in\mJ^\circ} \prod_{i: j\in i} \Lambda_i^* (n)^{m^\circ_{ji}}=\prod_{i\in\mI}
\Lambda_i^*(n)^{n^\circ_i}.
\end{equation*}
As the following proposition shows, this is the limit of the term (b).

\begin{proposition}\label{main prop}
For $n^\circ\in\bZ_+^I$, $n\in\bR_+^I$ and $\{ u_c\}_{c\in\bN}$ some bounded sequence such that
$c n + u_c\in\bZ_+^I$
\begin{equation*}
  \frac{{B}(c n +u_c -n^\circ)}{{B}(c n + u_c)}\xrightarrow[c\rightarrow\infty]{} \prod_{i\in \mI}
\Lambda^{*}_i(n)^{n^\circ_i}.
\end{equation*}
\end{proposition}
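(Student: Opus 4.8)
The plan is to reduce the ratio of normalizing constants to a \emph{finite} product of per-route throughputs, each evaluated at a point that differs from $cn$ by a bounded amount, and then appeal to Proposition~\ref{conv prop}. The engine is Lemma~\ref{simple Lambda}: since $\Lambda_i(m)=B(m-e_i)/B(m)$, removing customers one at a time turns a ratio $B(\cdot-e_i)/B(\cdot)$ into a throughput, and removing $n^\circ$ customers altogether telescopes into a product of $\sum_i n^\circ_i$ such throughputs.

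First I would fix an enumeration $i_1,i_2,\dots,i_L$ of the indices to be removed, where $L\bydef\sum_{i\in\mI} n^\circ_i$ and each $i\in\mI$ occurs exactly $n^\circ_i$ times, so that $\sum_{\ell=1}^L e_{i_\ell}=n^\circ$. Write $N_c\bydef cn+u_c$. For all $c$ large enough we have $N_{c,i}\ge n^\circ_i$ for every $i$ with $n_i>0$ (and when $n_i=0$ the fact that $B(N_c-n^\circ)$ appears forces $n^\circ_i=0$, as is the case in the application). Hence, for all large $c$, every partial vector $M_\ell\bydef N_c-\sum_{k=1}^{\ell}e_{i_k}$ lies in $\bZ_+^I$ and, moreover, the $i_\ell$-th coordinate of $M_{\ell-1}$ is at least $N_{c,i_\ell}-(n^\circ_{i_\ell}-1)\ge 1$, so $\Lambda_{i_\ell}(M_{\ell-1})$ is well defined. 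I can then telescope and apply Lemma~\ref{simple Lambda} at each step:
\begin{equation*}
\frac{B(N_c-n^\circ)}{B(N_c)}=\prod_{\ell=1}^{L}\frac{B(M_\ell)}{B(M_{\ell-1})}
=\prod_{\ell=1}^{L}\frac{B\big(M_{\ell-1}-e_{i_\ell}\big)}{B\big(M_{\ell-1}\big)}
=\prod_{\ell=1}^{L}\Lambda_{i_\ell}\big(M_{\ell-1}\big).
\end{equation*}

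Next, for each fixed $\ell\in\{1,\dots,L\}$ I would write $M_{\ell-1}=cn+u_c^{(\ell)}$ with $u_c^{(\ell)}\bydef u_c-\sum_{k=1}^{\ell-1}e_{i_k}$; since $\{u_c\}_c$ is bounded and $\|\sum_{k<\ell}e_{i_k}\|\le L$ is a fixed constant, the sequence $\{u_c^{(\ell)}\}_c$ is uniformly bounded and takes values making $cn+u_c^{(\ell)}\in\bZ_+^I$ (and with positive $i_\ell$-coordinate) for large $c$. Proposition~\ref{conv prop} then gives $\Lambda_{i_\ell}(cn+u_c^{(\ell)})\to\Lambda_{i_\ell}^*(n)$ as $c\to\infty$. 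Because the product above has a fixed finite number $L$ of factors, convergence of each factor yields
\begin{equation*}
\frac{B(cn+u_c-n^\circ)}{B(cn+u_c)}\xrightarrow[c\to\infty]{}\prod_{\ell=1}^{L}\Lambda_{i_\ell}^*(n)=\prod_{i\in\mI}\Lambda_i^*(n)^{n^\circ_i},
\end{equation*}
which is the assertion. I expect no analytic obstacle here: the only care needed is the bookkeeping of the previous paragraph — checking that all intermediate arguments of $B$ are genuine points of $\bZ_+^I$ for $c$ large, so that both the telescoping identity and Lemma~\ref{simple Lambda} are legitimately applied, and that the shifted sequences $\{u_c^{(\ell)}\}_c$ remain bounded so that Proposition~\ref{conv prop} can be invoked; since the product is finite and each $\Lambda_i^*(n)$ is strictly positive, nothing further is required.
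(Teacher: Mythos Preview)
Your proof is correct and follows essentially the same approach as the paper's: telescope the ratio $B(cn+u_c-n^\circ)/B(cn+u_c)$ into a finite product of single-customer ratios $B(\cdot-e_{i_\ell})/B(\cdot)$, identify each as a throughput via Lemma~\ref{simple Lambda}, and then apply Proposition~\ref{conv prop} to each factor using a shifted (still bounded) offset sequence. Your write-up is slightly more careful than the paper's about the bookkeeping that the intermediate arguments lie in $\bZ_+^I$, but the argument is the same.
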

\begin{proof}
From Proposition \ref{conv prop}, we have that
\begin{equation}\label{thrm72}
 \frac{{B}(cn+u_c-e_i)}{{B}(cn+u_c)}\xrightarrow[c\rightarrow\infty]
{ }\Lambda^{*}_i(n),
\end{equation}
for any $n\in\bR_+^I$ and any bounded sequence $\{u_c\}_{c\in\bN}$ such
that $cn+u_c\in\bZ_+^I$. Let $K=\sum
_i n_i^\circ$, Let $e_{i(1)},...,e_{i(K)}$
be a finite sequence of unit vectors and $n(0),...,n({K})$ be a sequence of vectors in
$\bZ_+^I$ such that
\begin{align*}
n(0)&=0,\quad n(K)=n^\circ\\ 
n(k)&=n(k-1)+e_{i(k)},\quad k=1,...,K.
\end{align*}
Applying \eqref{thrm72}, we have that
\begin{equation*}
 \frac{{B}(c n + u_c-n^\circ)}{{B}(cn + u_c)}=\prod_{k=0}^{K-1}
\frac{{B}(c n + u_c-n(k)-e_{i(k+1)})}{{B}(c n + u_c-n(k))} \xrightarrow[c\rightarrow\infty]{}
\prod_{k=1}^{K}
\Lambda^{*}_{i(k)}(n)= \prod_{i\in \mI}
\Lambda^{*}_i(n)^{n^\circ_i},
\end{equation*}
as required.  
\end{proof}
As the above proposition holds for any closed queueing network and because Lemma \ref{2nd closed
lemma} holds, we can say that
\begin{equation}\label{B bar conv}
  \frac{\bar{B}(c n + u_c -n^\circ)}{\bar{B}(c n + u_c)}\xrightarrow[c\rightarrow\infty]{} \prod_{i\in \mI}
\Lambda^{*}_i(n)^{n^\circ_i}=\prod_{j\in\mJ^\circ} \prod_{i: j\in i} \Lambda_i^* (n)^{m^\circ_{ji}}.
\end{equation}
This gives the convergence of term (b) in expression \eqref{3terms}.

We now study term (c) in expression \eqref{3terms}. We can note that this term is exactly
the probability that all non-bottleneck queues are empty.
\begin{lemma}\label{lemma4}
\begin{equation*}
 \pi(\{m_j=0, \forall j\in \mJ^\circ \}|n)=\dfrac{\bar{B}(n)}{B(n)}.
\end{equation*}
\end{lemma}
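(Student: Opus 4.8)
The claim is a clean combinatorial identity. The plan is to unfold both sides using the explicit product forms already on the page. Starting from the definition \eqref{closed nb ed}, the event $\{m_j=0,\ \forall j\in\mJ^\circ\}$ forces $m^\circ_{ji}=0$ for every $j\in\mJ^\circ$ and $i\ni j$, hence $n^\circ=0$. So
\[
\pi(\{m_j=0,\ \forall j\in\mJ^\circ\}\,|\,n)=\pi^\circ(0\,|\,n).
\]
Then I would simply invoke the decomposition \eqref{3terms} with $m^\circ=0$: term (a) is the empty product, hence $1$; term (b) is $\bar B(n-n^\circ)/\bar B(n)=\bar B(n)/\bar B(n)=1$ since $n^\circ=0$; and term (c) is $\bar B(n)/B(n)$. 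Multiplying these gives exactly $\bar B(n)/B(n)$, as required.

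Alternatively, and perhaps more transparently if one prefers not to lean on \eqref{3terms} being stated with $m^\circ=0$ as a legitimate specialization, I would argue directly from \eqref{eq:pi} and \eqref{Bn}. Summing $\pi(m|n)$ over all states $m\in\mS(n)$ with $m_j=0$ for $j\in\mJ^\circ$: in each such state all the factors coming from non-bottleneck queues collapse to $1$ (the multinomial coefficient $\binom{0}{0:\cdot}=1$ and each $\mu_{ji}^{-0}=1$), so the sum equals $\tfrac{1}{B(n)}\sum_{\bar m\in\bar\mS(n)}\prod_{j\in\bar\mJ}\big(\binom{\bar m_j}{\bar m_{ji}:j\in i}\prod_{i:j\in i}\mu_{ji}^{-\bar m_{ji}}\big)$, and the sum is by definition $\bar B(n)$. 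This yields $\bar B(n)/B(n)$ immediately.

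There is essentially no obstacle here; the only point requiring a line of care is the bijection between closed-network states with $m_j=0$ on $\mJ^\circ$ and states $\bar m\in\bar\mS(n)$ of the reduced network on $\bar\mJ$: the remaining coordinates $(m_{ji}:j\in\bar\mJ)$ still satisfy the per-route conservation $\sum_{j\in\bar\mJ}m_{ji}=n_i$ (since the non-bottleneck part contributes nothing), so this map is a bijection preserving the product weights, and the claimed equality of normalizing sums is exact. I would present the direct computation from \eqref{eq:pi} as the cleanest route, with the remark that it also drops straight out of \eqref{3terms} at $m^\circ=0$.
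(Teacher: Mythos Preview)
Your proposal is correct, and the direct computation you recommend (summing \eqref{eq:pi} over states with $m_j=0$ on $\mJ^\circ$, noting the non-bottleneck factors collapse to $1$, and recognizing the remaining sum as $\bar B(n)$) is exactly the paper's proof. Your alternative via \eqref{3terms} at $m^\circ=0$ is also sound and not circular, since the decomposition \eqref{3terms} is derived purely algebraically before this lemma; the lemma merely gives the probabilistic interpretation of term (c).
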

\begin{proof}
We have the following equations:
\begin{align*}
  \pi(\{m_j=0, \forall j\in \mJ^\circ \}|n)&=\sum_{\substack{m\in\mS(n):\\ m_j=0, j\in\mJ^\circ}}
\dfrac{1}{B(n)}\prod_{j\in\mJ} \left(\binom{m_j}{m_{ji}:j\in i} \prod_{i: j\in
i}\mu_{ji}^{-m_{ji}}\right)\\
&=\sum_{\substack{(0,\bar{m})\in\mS(n)}}
\dfrac{1}{B(n)}\prod_{j\in\bar{\mJ}} \left(\binom{\bar{m}_j}{\bar{m}_{ji}:j\in i}
\prod_{i: j\in
i}\mu_{ji}^{-\bar{m}_{ji}}\right)\\
&= \sum_{\bar{m}\in\bar{\mS}(n)} \frac{1}{B(n)}\prod_{j\in\bar{\mJ}} 
\left(\binom{\bar{m}_j}{\bar{m}_{ji}:j\in i} \prod_{i: j\in
i}\mu_{ji}^{-\bar{m}_{ji}}\right)=\frac{\bar{B}(n)}{B(n)}.
\end{align*}
 
\end{proof}

Although it is difficult to directly deal with events of the form $\{m_j=0, \forall j\in \mJ^\circ
\}$, we can deal with events of the form $\{m_j>0, \forall j\in \mJ' \}$ where $\mJ'\subset
\mJ^\circ$.

\begin{lemma} \label{first prod lemma} For $\mJ'\subset \mJ^\circ$,
\begin{equation*}
 \pi( \{m_j>0, \forall j\in \mJ' \} | c n + u_c)
\xrightarrow[c\rightarrow \infty]{}
\prod_{j\in\mJ'} {U^{*}_j(n)}
\end{equation*}
where we define
\begin{equation}
\label{eq:U_jstar}
 U^{*}_j(n)\bydef\sum_{i:j\in i} \frac{\Lambda^*_i(n)}{\mu_{ji}}.
\end{equation}
\end{lemma}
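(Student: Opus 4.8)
The plan is to reduce $\pi(\{m_j>0,\ \forall j\in\mJ'\}\,|\,n)$ to a finite sum of ratios of normalizing constants of the form $B(n-v)/B(n)$, with $v$ a \emph{fixed} nonnegative integer vector not depending on the scaling parameter, and then to let $c\to\infty$ term by term using Proposition~\ref{main prop}.

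First I would work in the explicit-state description $\mX(n)$ of Section~\ref{model}, in which the unnormalized stationary weight of a state $s$ is $\prod_{j}\prod_{k=1}^{m_j}\mu_{j i_j(k)}^{-1}$. Fix an enumeration $\mJ'=\{j_1,\dots,j_\ell\}$. Given $s\in\mX(n)$ with $m_{j_r}>0$ for all $r$, remove the head-of-line customer of $j_1$ --- say it has route $i_1$ --- then the head-of-line customer of $j_2$ --- route $i_2$ --- and so on through $j_\ell$. Since $j_1,\dots,j_\ell$ are \emph{distinct}, removing customers from $j_1,\dots,j_{r-1}$ does not touch $j_r$, so $j_r$ is still nonempty when its turn comes; the procedure is thus well defined and terminates in a state $s'\in\mX(n-e_{i_1}-\dots-e_{i_\ell})$. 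I would then check that $s\mapsto(i_1,\dots,i_\ell,s')$ is a bijection from $\{s\in\mX(n):m_{j_r}>0\ \forall r\}$ onto $\{(i_1,\dots,i_\ell,s'): j_r\in i_r\ \forall r,\ s'\in\mX(n-e_{i_1}-\dots-e_{i_\ell})\}$, the inverse reinserting a route-$i_r$ customer at the head of $j_r$ in the reverse order $r=\ell,\dots,1$, and that it scales the unnormalized weight by exactly $\mu_{j_1 i_1}\cdots\mu_{j_\ell i_\ell}$. Summing $\pi(s|n)$ over all such $s$, grouped by the removed route types, then yields
\begin{equation*}
\pi(\{m_j>0,\ \forall j\in\mJ'\}\,|\,n)=\sum_{i_1:j_1\in i_1}\cdots\sum_{i_\ell:j_\ell\in i_\ell}\frac{1}{\mu_{j_1 i_1}\cdots\mu_{j_\ell i_\ell}}\,\frac{B(n-e_{i_1}-\dots-e_{i_\ell})}{B(n)}.
\end{equation*}
(Equivalently one may stay in the $m$-representation and iterate the identity $\sum_{i:j\in i}m_{ji}/m_j=1$, valid on $\{m_j>0\}$, together with the usual multinomial bookkeeping when a route count at a queue is decremented.)

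Next I would specialize to $n=cn+u_c$ and send $c\to\infty$. For each fixed tuple $(i_1,\dots,i_\ell)$ the vector $v:=e_{i_1}+\dots+e_{i_\ell}\in\bZ_+^I$ is independent of $c$, so Proposition~\ref{main prop} gives $B(cn+u_c-v)/B(cn+u_c)\to\prod_{i\in\mI}\Lambda^*_i(n)^{v_i}=\prod_{r=1}^\ell\Lambda^*_{i_r}(n)$. The outer multi-sum is finite with range independent of $c$, so the limit passes through termwise and
\begin{equation*}
\pi(\{m_j>0,\ \forall j\in\mJ'\}\,|\,cn+u_c)\xrightarrow[c\to\infty]{}\sum_{i_1:j_1\in i_1}\cdots\sum_{i_\ell:j_\ell\in i_\ell}\prod_{r=1}^\ell\frac{\Lambda^*_{i_r}(n)}{\mu_{j_r i_r}}=\prod_{r=1}^\ell\Bigl(\sum_{i:j_r\in i}\frac{\Lambda^*_i(n)}{\mu_{j_r i}}\Bigr)=\prod_{j\in\mJ'}U^*_j(n),
\end{equation*}
which is the assertion of the lemma.

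I expect the only delicate point to be the combinatorial identity, and within it the fact that the route labels $i_1,\dots,i_\ell$ produced by the peeling need not be distinct --- so $v=e_{i_1}+\dots+e_{i_\ell}$ may have entries larger than one. This is precisely why Proposition~\ref{main prop}, whose limit $\prod_i\Lambda^*_i(n)^{v_i}$ records these multiplicities, is the right tool here rather than the single-step convergence $B(n-e_i)/B(n)\to\Lambda^*_i(n)$ of \eqref{thrm72} alone. As a sanity check, for $|\mJ'|=1$ the identity collapses to $\pi(\{m_j>0\}\,|\,n)=\sum_{i:j\in i}\Lambda_i(n)/\mu_{ji}=U_j(n)$, matching the interpretation of $U_j(n)$ as the probability that queue $j$ is busy.
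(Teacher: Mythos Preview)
Your proposal is correct and follows essentially the same approach as the paper's proof: both work in the explicit-state description, peel off the head-of-line customer at each queue of $\mJ'$, record its route type, and thereby express the event probability as a finite sum of ratios $B(n-n')/B(n)$ weighted by the corresponding $\prod_r \mu_{j_r i_r}^{-1}$, before applying Proposition~\ref{main prop} termwise and factoring. Your bijection argument makes the combinatorics slightly more explicit (in particular the role of distinctness of the queues in $\mJ'$), but the substance is the same.
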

\begin{proof}
To prove this lemma, we consider the explicit stationary distribution of a closed multi-class queueing network \eqref{eq:piexplicit}.

Recall $s=(s_j:j\in\mJ)$ where
$s_j=(i_j(k): k=1,...,m_j)\in\mI^{m_j}$ keeps
track of the exact position and route type of each customer within a queue.
With this state representation, we can calculate the probability that the customer at the head of each queue $j\in\mJ'$ is
from a specific route type $r(j)\in\mI$. In particular, if we let $r(j)$ be the route type of the
customer at the head of queue $j\in\mJ'$ and we let the vector $n'$ give the number of customers of
each route type at the heads of these queues then we can see that
\begin{align*}
 \pi(\{i_j(1)=r(j), j\in
\mJ'\}|n)&=\sum_{\substack{s\in\mX(n):\\i_j(1)=r(j), j\in
\mJ' }}\dfrac{1}{B(n)}\prod_{j\in\mJ}\prod_{i:j\in
i}\mu_{ji}^{-m_{ji}}\\
&=\bigg[\prod_{j\in\mJ'} \frac{1}{\mu_{jr(j)}} \bigg]\times
\sum_{s'\in\mX(n-n')}\dfrac{1}{B(n)}\prod_{j\in\mJ}\prod_{i:j\in
i}\mu_{ji}^{-m'_{ji}}\\
&=\bigg[\prod_{j\in\mJ'} \frac{1}{\mu_{jr(j)}}\bigg] \times \frac{{B}(n-n')}{{B}(n)}.
\end{align*}
In the second inequality above, we factor out the multiplicative terms corresponding to the heads of
each queue in $\mJ'$. We then notice the remaining states that must be summed over are all the
states where there are $n-n'$ customers on each route. The resulting sum then gives the normalizing
constant when there are $n-n'$ customers on each route.

Next, by Proposition \ref{main prop}, we have that
\begin{equation}\label{head queue lim}
  \pi(\{i_j(1)=r(j), j\in \mJ'\}|n) = \bigg[\prod_{j\in\mJ'} \frac{1}{\mu_{jr(j)}}\bigg] \times
\frac{{B}(n-n')}{{B}(n)} \xrightarrow[c\rightarrow\infty]{} \prod_{j\in\mJ'}
\frac{\Lambda_{r(j)}^*(n)}{\mu_{jr(j)}}.
\end{equation}
The events $\{i_j(1)=r(j), j\in \mJ'\}$ are disjoint for different choices of $r'=(r(j):j\in\mJ')$.
For a queue to be nonempty, there must be some customer at its head. Thus,
\begin{equation*}
 \bigcup_{r'\in \mI^{J'}}\{i_j(1)=r(j), j\in \mJ'\}= \{ m_j>0, j\in\mJ\},
\end{equation*}
and consequently using \eqref{head queue lim}, we have
\begin{align*}
 \pi(\{m_j>0, j\in\mJ\}|n) =& \sum_{r'\in \mI^{J'}}\pi(\{i_j(1)=r(j), j\in \mJ'\}|n)\\
=&\sum_{r'\in \mI^{J'}}\bigg[\prod_{j\in\mJ'} \frac{1}{\mu_{jr(j)}}\bigg] \times
\frac{{B}(n-n')}{{B}(n)}\\
\xrightarrow[c\rightarrow \infty]{} & \sum_{r'\in \mI^{J'}} \prod_{j\in\mJ'}
\left(\frac{\Lambda^*_{r(j)}(n)}{\mu_{jr(j)}}\right)
= \prod_{j\in\mJ'}
\left(\sum_{r\in \mI} \frac{\Lambda^*_{r}(n)}{\mu_{jr}}\right)=\prod_{j\in\mJ'}
U^*_{j}(n)
\end{align*}
as required.
 
\end{proof}
Now we are in the position to prove the convergence of term (c) in expression \eqref{3terms}.

\begin{proposition}\label{normalize const lemma}
\begin{equation*}
\frac{\bar{B}(c n  + u_c)}{B(c n+ u_c)} \xrightarrow[c\rightarrow \infty]{}
\prod_{j\in\mJ^\circ}
\left(1-{U^{*}_j(n)}\right)
\end{equation*}
\end{proposition}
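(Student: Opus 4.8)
The plan is to read off the ratio $\bar B(cn+u_c)/B(cn+u_c)$ as a probability and then compute its limit by inclusion--exclusion. By Lemma~\ref{lemma4}, $\bar B(cn+u_c)/B(cn+u_c)=\pi(\{m_j=0,\ \forall j\in\mJ^\circ\}\mid cn+u_c)$, the stationary probability that every non-bottleneck queue is empty. The event that all non-bottleneck queues are empty is the complement of a union of ``non-empty'' events, so the inclusion--exclusion principle will convert it into an alternating sum of exactly the events handled by Lemma~\ref{first prod lemma}.

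Concretely, setting $A_j=\{m_j>0\}$ for $j\in\mJ^\circ$, we have $\{m_j=0,\ \forall j\in\mJ^\circ\}=\bigcap_{j\in\mJ^\circ}A_j^c$, and inclusion--exclusion gives
\[
\pi\bigl(\{m_j=0,\ \forall j\in\mJ^\circ\}\mid cn+u_c\bigr)
=\sum_{\mJ'\subseteq\mJ^\circ}(-1)^{|\mJ'|}\,\pi\bigl(\{m_j>0,\ \forall j\in\mJ'\}\mid cn+u_c\bigr),
\]
where the $\mJ'=\emptyset$ term is interpreted as $1$. This is a \emph{finite} sum, with $2^{J^\circ}$ terms and this number is independent of $c$, so we may pass to the limit term by term. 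Invoking Lemma~\ref{first prod lemma} for each $\mJ'$ (the empty product being $1$, consistently with the $\mJ'=\emptyset$ term), we obtain
\[
\lim_{c\to\infty}\pi\bigl(\{m_j=0,\ \forall j\in\mJ^\circ\}\mid cn+u_c\bigr)
=\sum_{\mJ'\subseteq\mJ^\circ}(-1)^{|\mJ'|}\prod_{j\in\mJ'}U^*_j(n)
=\prod_{j\in\mJ^\circ}\bigl(1-U^*_j(n)\bigr),
\]
using the elementary identity $\sum_{S\subseteq T}(-1)^{|S|}\prod_{j\in S}x_j=\prod_{j\in T}(1-x_j)$. Together with Lemma~\ref{lemma4}, this is exactly the claimed limit.

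There is no genuinely hard step; the only points needing care are that the inclusion--exclusion sum is finite (so no dominated-convergence argument over an infinite index set is required to justify the termwise limit) and that Lemma~\ref{first prod lemma} applies with $\mJ'$ equal to all of $\mJ^\circ$ and degenerates correctly to $1$ when $\mJ'=\emptyset$. If the cited form of Lemma~\ref{first prod lemma} is read with a strict inclusion, one simply notes that its proof goes through verbatim with $\mJ'=\mJ^\circ$, since it relies only on Proposition~\ref{main prop} and on partitioning $\{m_j>0,\ \forall j\in\mJ'\}$ according to the route types of the customers at the heads of the queues in $\mJ'$. Finally, since each non-bottleneck $j$ satisfies $U^*_j(n)<1$ by Proposition~\ref{conv prop}, the limiting product is strictly positive, consistent with non-bottleneck backlogs remaining tight.
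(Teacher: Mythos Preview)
Your argument is correct and follows essentially the same route as the paper: identify the ratio as $\pi(\{m_j=0,\ \forall j\in\mJ^\circ\}\mid cn+u_c)$ via Lemma~\ref{lemma4}, apply inclusion--exclusion, pass to the limit termwise using Lemma~\ref{first prod lemma}, and recognize the resulting alternating sum as the expansion of $\prod_{j\in\mJ^\circ}(1-U^*_j(n))$. Your additional remarks on the finiteness of the sum and on the $\mJ'=\mJ^\circ$ and $\mJ'=\emptyset$ edge cases are apt clarifications but do not change the substance of the proof.
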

\begin{proof}
In the following expression, we use  Lemma \ref{lemma4}; we apply the Inclusion-Exclusion
Principle
\eqref{inc-ex}; we apply Lemma \ref{first prod lemma} \eqref{use prod
lemma}; and then we notice the resulting summation is  $\prod_{j\in\mJ^\circ}
\left(1-{U^{*}_j(n)}\right)$ expanded:
\begin{align}
\dfrac{\bar{B}(c n + u_c)}{B(c n + u_c)}=&  \pi(\{m_j=0, \forall j\in \mJ^\circ \}|c n + u_c) \\
=&1-\pi\bigg( \bigcup_{j\in\mJ^\circ} \{ m_j > 0 \} \Big| c n + u_c\bigg) \notag \\
= &\sum_{k=0}^{J^\circ} \sum_{\substack{j_1,...,j_k\in\mJ^\circ:\\j_1<...<j_k}} (-1)^k
\pi(\{m_j > 0, j=j_1,...,j_k\}|c n + u_c) \label{inc-ex}\\
\xrightarrow[c\rightarrow\infty]{} &\sum_{k=0}^{J^\circ}
\sum_{\substack{j_1,...,j_k\in\mJ^\circ :\\j_1<...<j_k}} (-1)^k\prod_{j=j_1,...,j_k}
{U^{*}_j(n)} = \prod_{j\in\mJ^\circ}\left(
1-{U^{*}_j(n)} \right). \label{use prod lemma}
\end{align}
In expression \eqref{inc-ex}, the $k=0$ summand is understood to equal $1$.
  \end{proof}

We have now discovered the limiting behavior of terms (a), (b), and (c), and we can prove
Theorem~\ref{Thrm1}.

\begin{proof}[Proof of Theorem \ref{Thrm1}]
We apply to the equality \eqref{3terms} the results \eqref{B bar conv} (which we
derived from Proposition \ref{normalize const lemma}) and  Proposition \ref{main prop}:
 \begin{align*}
 \pi^\circ (m^\circ|c n + u_c) =& {\dfrac{\bar{B}(c n + u_c)}{B(c n + u_c)}}\times
{\frac{\bar{B}(c n + u_c-n^\circ)}{\bar{B}(c n + u_c)}} \times
{\prod_{j\in\mJ^\circ} \left(\binom{m^\circ_j}{m^\circ_{ji}:j\in i} \prod_{i: j\in
i} \frac{1}{\mu_{ji}^{m^\circ_{ji}}}\right)}\\
\xrightarrow[c\rightarrow\infty]{}&   
\prod_{j\in\mJ^\circ}\left( 1-{U^{*}_j(n)} \right) \times
\prod_{j\in\mJ^\circ} \prod_{i: j\in i} \Lambda_i^*
(n)^{m^\circ_{ji}} \times
{\prod_{j\in\mJ^\circ} \left(\binom{m^\circ_j}{m^\circ_{ji}:j\in i} \prod_{i: j\in
i} \left(\frac{1}{\mu_{ji}}\right)^{m^\circ_{ji}}\right)}\\
=&\prod_{j\in\mJ^\circ}\left(
1-{U^{*}_j(n)} \right) \times
{\prod_{j\in\mJ^\circ} \left(\binom{m^\circ_j}{m^\circ_{ji}:j\in i} \prod_{i: j\in
i} \left(\frac{\Lambda_i(n)}{\mu_{ji}}\right)^{m^\circ_{ji}}\right)}\\
=& \pi^\circ_{\Lambda^*(n)}(m^\circ).
\end{align*}
This proves that our network's non-bottleneck queues become independent.
  \end{proof}

We have now shown, for our stationary closed queueing network, convergence in distribution of non-bottleneck queues to a stationary open queueing network.  We now wish to show convergence of queue sizes in expectation, Theorem~\ref{Thrm1_expectation}.

Building on the above results, we now develop a proof for Theorem~\ref{Thrm1_expectation}.
Once again, we gain the result by considering a modified version of our original queueing network.
 Recalling the normalizing constant $B(n)$ \eqref{Bn}, let $B^{+j}(n)$ be the normalizing constant of the closed queueing network that is obtained by adding a
replica of queue $j$. This replica is added immediately after queue $j$. Customers leaving $j$ will immediately join this replica queue, and then, after service at the replica queue, customers will continue on their route.
We can now express mean queue lengths in terms of these normalizing constants.
\begin{lemma}
\label{lm:exp_B}
$$E M_{ji}^\circ(n) = \frac{1}{\mu_{ji}} \frac{B^{+j}(n-e_i)}{B(n)}.$$
\end{lemma}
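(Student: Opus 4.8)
The plan is to compute $E M_{ji}^\circ(n)$ directly from the definition and recognize the resulting sum as a normalizing constant of the augmented network. Write
\[
E M_{ji}^\circ(n) = \sum_{m\in\mS(n)} m_{ji}\,\pi(m|n) = \frac{1}{B(n)}\sum_{m\in\mS(n)} m_{ji}\prod_{j'\in\mJ}\left(\binom{m_{j'}}{m_{j'i'}:i'\ni j'}\prod_{i':j'\in i'}\mu_{j'i'}^{-m_{j'i'}}\right).
\]
The key algebraic step is to absorb the factor $m_{ji}$ into the combinatorial structure so that the summand becomes (a constant multiple of) a product-form weight for the augmented network. First I would single out queue $j$ and use the identity $m_{ji}\binom{m_j}{m_{ji}:i\ni j} = m_j\binom{m_j-1}{\,\cdot\,}$ together with a shift of the summation index to peel off one route-$i$ customer from queue $j$; equivalently, and more cleanly, I would interpret the factor $m_{ji}$ combinatorially as a choice of one distinguished route-$i$ customer at queue $j$, and move that customer into the replica queue $j'$ placed immediately after $j$. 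Under this bijection, a state $m$ of the original network with $n$ customers and a marked route-$i$ customer at $j$ corresponds exactly to a state of the $B^{+j}$-network with $n-e_i$ customers at the original queues plus one route-$i$ customer at the replica queue; the product-form weight transforms accordingly, picking up exactly one extra factor $\mu_{ji}^{-1}$ from servicing that customer at the replica (which serves at the same rate $\mu_{ji}$).

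More concretely, the steps in order are: (1) expand $E M_{ji}^\circ(n)$ as above; (2) establish the combinatorial identity that rewrites $m_{ji}\cdot(\text{weight of }m)$ as $\mu_{ji}^{-1}\cdot(\text{weight of the corresponding augmented state})$, being careful that the replica queue contains exactly one customer so its multinomial coefficient is $1$ and it contributes precisely $\mu_{ji}^{-m_{\text{replica},i}} = \mu_{ji}^{-1}$; (3) sum over all such augmented states, noting that as $m$ ranges over $\mS(n)$ with a marked route-$i$ customer at $j$, the augmented state ranges over exactly those states of the $B^{+j}$-network with population $n-e_i$ at the original stations and one route-$i$ customer at the replica — which is the full state space of the $B^{+j}$-network with population $n-e_i$, since the replica queue necessarily holds the remaining customer; (4) identify the resulting sum as $B^{+j}(n-e_i)$, giving $E M_{ji}^\circ(n) = \mu_{ji}^{-1}B^{+j}(n-e_i)/B(n)$.

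The main obstacle is verifying step (2)–(3) carefully: one must check that the bijection between (state, marked customer) pairs in the original network and states of the augmented network with one fewer route-$i$ customer is exact, and that the product-form weights match up to the single factor $\mu_{ji}^{-1}$. In particular I would double-check the boundary bookkeeping — that removing a route-$i$ customer from queue $j$ and placing it at the replica does not alter any other queue's contribution, that the replica's position in route $i$ is consistent with the route order, and that $m_{ji}$ indeed equals the number of ways to pick the distinguished customer so no overcounting occurs. This is essentially the same mechanism underlying the classical identity $\Lambda_i(n) = B(n-e_i)/B(n)$ of Lemma~\ref{simple Lambda} (which corresponds to the rate of service completions, i.e.\ a marked customer at the head of a queue), so I expect the argument to go through cleanly once the augmented-network weights are written out; the bookkeeping is routine but must be done with care.
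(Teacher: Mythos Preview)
Your bijection in steps (2)--(3) does not work as stated, and this is the crux of the proof. Two concrete problems:

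\textbf{The weight identity is false.} If you take a state $m\in\mS(n)$, mark one route-$i$ customer at queue $j$, and move it to the replica, the resulting augmented state has weight
\[
w^{+j}=(\text{other queues})\cdot(m_j-1)!\,\frac{\mu_{ji}^{-(m_{ji}-1)}}{(m_{ji}-1)!}\prod_{k\neq i}\frac{\mu_{jk}^{-m_{jk}}}{m_{jk}!}\cdot\underbrace{\mu_{ji}^{-1}}_{\text{replica}},
\]
whereas
\[
m_{ji}\cdot w(m)=(\text{other queues})\cdot m_j!\,\frac{\mu_{ji}^{-m_{ji}}}{(m_{ji}-1)!}\prod_{k\neq i}\frac{\mu_{jk}^{-m_{jk}}}{m_{jk}!}.
\]
These differ by a factor of $m_j$, not by $\mu_{ji}^{-1}$. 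So the identity you assert in step~(2) fails.

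\textbf{The image of the bijection is a strict subset.} Even setting the weight issue aside, your map produces only those augmented states in which the replica holds \emph{exactly one} customer, of type $i$. But $B^{+j}(n-e_i)$ sums over the full state space of the augmented network with population $n-e_i$, where the replica can be empty, can hold several customers, and can hold customers of any route through $j$. Your claim that ``the replica queue necessarily holds the remaining customer'' is incorrect: with population $n-e_i$ there is no ``remaining'' customer, and the replica is just another queue. A one-queue, one-route sanity check with $n=2$ already shows the mismatch: your bijection yields a single augmented configuration, while $B^{+j}(n-e_i)=B^{+j}(1)$ sums over two.

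What the paper actually does is different. After factoring out $\mu_{ji}^{-1}$ and shifting the index $m_{ji}\mapsto m_{ji}-1$ (so the sum now ranges over $\mS(n-e_i)$), the weight at queue $j$ acquires an extra factor $(m_j+1)$ relative to the standard multinomial. The paper then uses the multinomial Vandermonde convolution
\[
(m_j+1)\binom{m_j}{m_{ji}:i\ni j}=\sum_{k=0}^{m_j}\ \sum_{\substack{\tilde m_1+\tilde m_2=m_j\cdot\\ \sum_i\tilde m_{1i}=k}}\binom{k}{\tilde m_{1i}:i\ni j}\binom{m_j-k}{\tilde m_{2i}:i\ni j}
\]
to split the occupancy at $j$ into all possible occupancies at two copies of $j$. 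Summing over $m\in\mS(n-e_i)$ then yields exactly $B^{+j}(n-e_i)$. The Vandermonde step is precisely what replaces your attempted bijection and is the missing idea in your proposal.
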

\begin{proof}
For all $j'\in\mJ,i'\in \mI$, we have
\begin{eqnarray}
E M_{j'i'}^\circ(n) 
&& \bydef \frac{1}{B(n)}\sum_{m\in \mS(n):m_{j'i'}>0} m_{j'i'} \prod_{j\in\mJ} m_j! \prod_{i:j\in i } \frac{\mu_{ji}^{-m_{ji}}}{m_{ji}!} \\ 
%
%
&& = \frac{\mu_{j'i'}^{-1}}{B(n)}\sum_{m\in \mS(n):m_{j'i'}>0} \prod_{{\scriptstyle j\in\mJ:}\atop{\scriptstyle j\neq j'}} m_j! \prod_{i:j\in i } \frac{\mu_{ji}^{-m_{ji}}}{m_{ji}!} \times m_{j'}! \prod_{i:j'\in i,i\neq i' } \frac{\mu_{j'i}^{-m_{j'i}}}{m_{j'i}!} \times \frac{\mu_{j'i'}^{-m_{j'i'}+1}}{(m_{j'i'}-1)!}\\ 
&& = \frac{\mu_{j'i'}^{-1}}{B(n)}\sum_{m\in \mS(n-e_{i'})} \prod_{{\scriptstyle j\in\mJ:}\atop{\scriptstyle j\neq j'}} m_j! \prod_{i:j\in i } \frac{\mu_{ji}^{-m_{ji}}}{m_{ji}!} \times (m_{j'}+1) ! \prod_{i:j'\in i} \frac{\mu_{j'i}^{-m_{j'i}}}{m_{j'i}!} \\ 
&& =   \frac{\mu_{j'i'}^{-1}}{B(n)}\sum_{m\in \mS(n-e_{i'})} \prod_{{\scriptstyle j\in\mJ:}\atop{\scriptstyle j\neq j'}} m_j! \prod_{i:j\in i } \frac{\mu_{ji}^{-m_{ji}}}{m_{ji}!}
\times \sum_{k=0}^{m_{j'}}
\binom{m_{j'}}{m_{j'i}:i\ni j} \prod_{i:j'\in i } \mu_{j'i}^{- m_{j'i}} \label{eq:using.vander}
\ .
\end{eqnarray}
Using the multinomial Vandermonde convolution we can rewrite the last sum in \eqref{eq:using.vander}
in the following way
\begin{eqnarray}
&&\sum_{k=0}^{m_{j'}}
\binom{m_{j'}}{m_{j'i}:i\ni j} \prod_{i:j'\in i } \mu_{j'i}^{- m_{j'i}}\\
 =  &&
\sum_{k=0}^{m_{j'}}
\sum_{{\scriptstyle \tilde m_1,\tilde m_2\in\mathbb{Z}_+^{I}:\sum_{i}\tilde m_{1i}=k,}\atop{\scriptstyle \tilde m_{1i}+\tilde m_{2i}=m_{j'i},\forall i}}
\binom{k}{\tilde m_{1i}:i\ni j'}\binom{m_{j'}-k}{m_{j'i}-\tilde m_{1i}:i\ni j'} \prod_{i:j'\in i } \mu_{j'i}^{-m_{j'i}} \label{eq:vander}\\
%
%
 =&&
\sum_{{\scriptstyle \tilde m_1,\tilde m_2\in\mathbb{Z}_+^{I}:}\atop{\scriptstyle \tilde m_{1i}+\tilde m_{2i}=m_{j'i},\forall i}}
\binom{\sum_i \tilde m_{1i}}{\tilde m_{1i}:i\ni j'} \prod_{i:j'\in i } \mu_{j'i}^{-\tilde m_{1i}}
\binom{\sum_i \tilde m_{2i}}{\tilde m_{2i}:i\ni j'} \prod_{i:j'\in i } \mu_{j'i}^{-\tilde m_{2i}} 
\ .
\end{eqnarray}
We can interpret the above identity as adding an extra queue to our network
and hence it follows that 
\begin{equation}
E M_{j'i'}^\circ(n) 
 = \frac{\mu_{j'i'}^{-1}}{B(n)}\sum_{m\in \mS^{+j'}(n-e_{i'})} \prod_{j\in\mJ^{+j'}} m_j! \prod_{i:j\in i } \frac{\mu_{ji}^{-m_{ji}}}{m_{ji}!} \label{eq:ss_replica}
 = \frac{\mu_{j'i'}^{-1} }{B(n)}B^{+j'}(n-e_{i'})
\ .
\end{equation}
In the above expression, $\mS^{+j'}(n-e_{i'})$ and $\mJ^{+j'}$ are, respectively, the state space and set of queues obtained when adding a replica of queue $j'$.
  \end{proof}

With Lemma~\ref{lm:exp_B}, we can prove Theorem \ref{Thrm1_expectation}.

\begin{proof}[Proof of Theorem \ref{Thrm1_expectation}]
%
%
By assumption, $j\in\mJ^\circ$ -- it is a non-bottleneck queue. Notice that the replica queue added for $B^{+j}$ corresponds to a repeated constraint in optimization \eqref{PF Opt}, which by Theorem~\ref{Thrm1} implies that the replica queue must also be a non-bottleneck queue.

By Lemma~\ref{lm:exp_B} and the definition of $\bar{B}(n)$ given in \eqref{eq:barB}, the
following equation holds
\begin{equation} \label{eq:BplusB}
\lim_{c\to\infty} E M_{ji}^\circ(c n + u_c) 
=
\frac{1}{\mu_{ji}}\lim_{c\to\infty} \frac{\bar B(c n + u_c)}{B(c n + u_c)} \frac{\bar B(cn-e_i+ u_c)}{\bar B(c n + u_c)} \frac{B^{+j}(cn-e_i+ u_c)}{\bar B(cn-e_i+u_c)}
\end{equation}
Now, using Lemma~\ref{2nd closed lemma},
\begin{equation}
\label{eq:Bpluslimit1}
\lim_{c\to\infty} \frac{\bar B(cn-e_i+u_c)}{\bar B(c n + u_c)} = \Lambda_i^*(n) 
\end{equation}
Since the replica queue $j$ is not a bottleneck, we can apply Proposition~\ref{normalize const lemma} twice to find that 
\begin{equation}
\label{eq:Bpluslimit2}
\lim_{c\to\infty} \frac{\bar B(c n + u_c)}{B(c n + u_c)} \frac{B^{+j}(cn-e_i+u_c)}{\bar B(cn-e_i+u_c)} =
\frac{1}{1-{U^{*}_j(n)}},
\end{equation}
where we recall that $U^{*}_j(n)$ is defined in \eqref{eq:U_jstar}.
Substituting \eqref{eq:Bpluslimit1}-\eqref{eq:Bpluslimit2} in \eqref{eq:BplusB}, we get
\begin{equation}\label{lm:lims}
\lim_{c\to\infty} E M_{ji}^\circ(c n + u_c) =  \dfrac{\Lambda_i^*(n)/\mu_{ji}}{1-U_j^*(n)}.
\end{equation}
For a non-bottleneck queue, $j\in \mJ^\circ$, $U^*_j(n)= \sum_{i:j\in i} \Lambda^*_i(n)/{\mu_{ji}}<1$, with some standard calculations on  one can verify that
\begin{equation}
\label{eq:QL_open}
E M_{\Lambda^*(n),ji}^\circ = \dfrac{\Lambda^*_i(n)/\mu_{ji}}{1-U^*_j(n)},
\end{equation}
for all $i\in\mI$ and $j\in \mJ^\circ$. Thus together \eqref{eq:QL_open} and \eqref{lm:lims} imply the required result
\begin{equation*}
\lim_{c\to\infty} E M_{ji}^\circ(c n + u_c) = E M_{\Lambda^{*}(n),ji}^\circ
\end{equation*}
for all $i\in\mI$ and $j\in \mJ^\circ$.
  \end{proof}

\subsection{Proof of fluid limit.}\label{proof2}
In this section, we prove Theorem \ref{main fluid theorem}. We first introduce and recall some notation before proceeding with a proof.

Let $\mI(j)\bydef\{i\in\mI: j \in i\}$ and $I(j)\bydef|\mI(j)|$. Assuming that set
$\mI(j)$ is ordered, we also denote by $i_j(l)$, with $l=1,\ldots,I(j)$, the function that enumerates its elements.
We define for each $j \in \mJ$, an independent Poisson marked point process $N_j$ with intensity 
$\mu_j \, dt \otimes du$ on $\bR \times [0,1]$, where $\mu_j = \max \{\, \mu_{ji}: i \in \mI(j)\}$, and the function $\chi_j(m_j,u)$ on $\bN^{I(j)}\times(0,1)$ in the following way:
$$
\chi_j(m_j,u)
=\left\{\begin{array}{ll}
0 	& \mbox{ if $m_j = 0$ }  \\
\\
i_j(l)	& \mbox{if } 
\sum_{h=0}^{l-1} m_{jh} \, \mu_{jh}
    \leq m_j \, \mu_j \times u  \leq
	\sum_{h=0}^l m_{jh} \, \mu_{jh} \\
\\
0 	& \mbox{if } 
\sum_{h=0}^{I(j)} m_{jh} \, \mu_{jh}  
	\leq m_j \, \mu_j \times u\leq m_j \, \mu_j.
\end{array}\right.
$$
Therefore, the network process can be written as
$$
dM_{ji}(t) 
=  1\{ \chi_{j'}(M_{j'}(t-),U_{N_{j'}(t)}) = i \} \, dN_{j'}(t)  
-  1\{ \chi_j(M_j(t-),U_{N_{j}(t)}) = i \} \, dN_{j}(t) 
$$
where $j'$ denotes the queue just before $j$ on route $i$, and
the uniform independent random variables $U_k$ are generated by the second coordinate of
the marked point process. In the following we will use also the
notations $\hat j$ and $\hat j'$ to denote the first queue on route $i$ before the
queues $j$ and $j'$ respectively that are non empty. In integral form, we have
$$M_{ji}(t) = M_{ji}(0) + \mL_{j'i}(t) - \mL_{ji}(t) + \aL_{j'i}(t) - \aL_{ji}(t) $$
where $\mL_{ji}(t)$ is the martingale 
$$
\mL_{ji}(t)
 = \int_0^t \int_0^1 1\{ \chi_j(M_j(s-),u) = i \} [dN_j(s,u) - \mu_j \, du \, ds ],
$$
and the process $\aL_{ij}(t)$ is given by
$$
\aL_{ji}(t)  = \int_0^t \int_0^1 \mu_j \, 1\{\chi_j(M_j(s-),u) = i \} \, du \, ds.
$$
From the equation above, noticing that 
$
\int_0^1 1\{ \chi_j(M_j(s,\, u) = i \} \, du 
= (\mu_{ji} \, M_{ji}(s))/(\mu_j \, M_j(s))$ and with the convention that $0/0=0$,
we get that 
$$
\frac{1}{\mu_{ji}}\aL_{ji}(t)
= \int_0^t \frac{\mu_j}{\mu_{ji}} \int_0^1  \, 1\{\chi_j(M_j(s-),u) = i \} \, du \, ds \\
= \int_0^t \frac{M_{ji}(s-)}{M_j(s-)} \, ds. 
$$
Summing over $i:j\in i$ we have that for $t'\leq t''$
\begin{equation}
\sum_{i:j\in i} \frac{1}{\mu_{ji}} \left(\aL_{ji}(t'')-\aL_{ji}(t')\right)
=
\int_{t'}^{t''} \sum_{i:j\in i} \frac{M_{ji}(s-)}{M_j(s-)} \, ds 
=
\int_{t'}^{t''} \sum_{i:j\in i} 1\{M_j(s-)\not=0\} \, ds \leq t''-t'  \label{Lambda.Lipschitz},
\end{equation}
which gives the Lipschitz condition for the process $\aL_{ji}(t)$.

We define the scaled process $\bar M(n \, c, \, t) = c^{-1} \, M(n \, c, \, c\, t)$, such that,
having $M_{ji}(0) = c \, n_{ji}$, it has the $ji$-component given by
$$
\bar M_{ji}(n \, c, \, t)
= n_{ji}
+ \frac{ \mL_{j'i}(c t) - \mL_{ji}(c t)}{c} 
+ \frac{ \aL_{j'i}(c t) - \aL_{ji}(c t)}{c}.
$$
Using the same steps as before, we can rewrite the process $c^{-1} \aL_{ji}(ct)$ in the following way
\begin{eqnarray*}
\frac{\aL_{ji}(ct)}{c} 
&=& \frac{1}{c} \int_0^{c\,t}
 \int_0^1 \mu_j \, 1\{ \chi_j(M_j(s), u) = i \} \,  du \, ds \\
&=& \int_0^t
 \int_0^1 \mu_j \, 1\{ \chi_j(M_j(c\, s),\, u) = i \}  \,  du \, ds 
= \int_0^t  \frac{M_{ji}(c\,s)}{M_j(c\,s)} \, \mu_{ji} \,  ds  \label{eq:aji} \ .
\end{eqnarray*}

\begin{proposition}
Given $n \in \mathbb{N}^{J\times I}$,  the processes $\{(\bar M (n \, c, \, t), t>0)\}_{c>0}$, with $M (n \, c, \, 0) = n \, c$,  are
tight.
\end{proposition}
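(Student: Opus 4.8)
The plan is to exploit the additive decomposition recorded above,
\[
\bar M_{ji}(nc,t) = n_{ji} + \frac{\mL_{j'i}(ct)-\mL_{ji}(ct)}{c} + \frac{\aL_{j'i}(ct)-\aL_{ji}(ct)}{c},
\]
and to control its two kinds of terms separately. The rescaled flow terms $c^{-1}\aL_{ji}(ct)$ will be shown to be uniformly (in $c$) Lipschitz, hence contained in a fixed compact subset of $C([0,T])$ for every $T$, while the rescaled martingale terms $c^{-1}\mL_{ji}(ct)$ will be shown to converge to $0$ uniformly on compacts in probability. Tightness of $\{\bar M(nc,\cdot)\}_c$ in the Skorokhod space $D([0,\infty),\bR^{J\times I})$, with all subsequential limits continuous, then follows from the standard fact that a tight family perturbed by a process whose supremum on compacts vanishes in probability remains tight, together with the observation that the jumps of $\bar M(nc,\cdot)$ are of size $1/c$.

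For the flow terms I would argue directly from \eqref{Lambda.Lipschitz}: for every queue $j$ and all $0\le t'\le t''$ one has $\sum_{i:j\in i}\mu_{ji}^{-1}\big(\aL_{ji}(t'')-\aL_{ji}(t')\big)\le t''-t'$, and since each summand is nonnegative this yields $\aL_{ji}(t'')-\aL_{ji}(t')\le \mu_{ji}(t''-t')$. Thus $\aL_{ji}$ is $\mu_{ji}$-Lipschitz, the rescaled process $t\mapsto c^{-1}\aL_{ji}(ct)$ is again $\mu_{ji}$-Lipschitz uniformly in $c$, and, starting at $0$ and being nondecreasing, it is bounded by $\mu_{ji}T$ on $[0,T]$; by Arzel\`a--Ascoli the family of such processes is precompact in $C([0,T])$.

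For the martingale terms I would use that $\mL_{ji}$ is a square-integrable martingale with predictable quadratic variation
\[
\langle\mL_{ji}\rangle(t)=\int_0^t\!\int_0^1 1\{\chi_j(M_j(s-),u)=i\}\,\mu_j\,du\,ds\le \mu_j\,t,
\]
where the bound uses $\int_0^1 1\{\chi_j(M_j(s-),u)=i\}\,du=\mu_{ji}M_{ji}(s-)/(\mu_j M_j(s-))\le 1$ (and equals $0$ when $M_j(s-)=0$). Doob's $L^2$ maximal inequality then gives
\[
\bE\Big[\sup_{s\le T}\big(c^{-1}\mL_{ji}(cs)\big)^2\Big]\le \frac{4}{c^2}\,\bE\big[\langle\mL_{ji}\rangle(cT)\big]\le \frac{4\mu_j T}{c}\ \xrightarrow[c\to\infty]{}\ 0,
\]
so $\sup_{s\le T}|c^{-1}\mL_{ji}(cs)|\to 0$ in $L^2$, hence in probability, for each $T$.

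Putting the pieces together, $\bar M(nc,\cdot)$ equals a constant plus a family precompact in $C$ (the flow contribution, with uniformly bounded Lipschitz constants) plus a perturbation whose sup-norm over compacts tends to $0$ in probability (the martingale contribution); combined with the uniform pathwise bound $\bar M_{ji}(nc,t)\le \sum_{j'\in i}n_{j'i}$ coming from conservation of route-$i$ mass, this gives tightness in $D([0,\infty),\bR^{J\times I})$ with continuous limit points. I do not expect a deep obstacle here: the two displayed estimates are the substance, and both are immediate from the point-process representation. The only point requiring some care is the Skorokhod-space bookkeeping that turns ``uniformly Lipschitz plus uniformly small'' into control of the $w'$ modulus of continuity, which is standard (cf.\ Ethier--Kurtz or Billingsley); the martingale bound is the one place where the stochastic (Poisson point-process) structure of the model is genuinely used.
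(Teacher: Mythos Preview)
Your proposal is correct and follows essentially the same approach as the paper: decompose $\bar M$ into the rescaled flow terms and the rescaled martingale terms, control the former by a uniform Lipschitz bound derived from \eqref{Lambda.Lipschitz}, and control the latter via Doob's $L^2$ inequality applied to the compensated Poisson integrals. The paper phrases the argument through the modulus-of-continuity criterion (Theorem~C.9 in Robert) applied to each coordinate of the difference processes $\Delta\aL_{ji}$ and $\Delta\mL_{ji}$, whereas you treat each $\aL_{ji}$ and $\mL_{ji}$ separately and invoke Arzel\`a--Ascoli plus the ``tight family plus vanishing perturbation'' principle; these are the same estimates packaged slightly differently, and your martingale step (bounding $\langle\mL_{ji}\rangle(t)\le\mu_j t$ and deducing $\sup_{s\le T}|c^{-1}\mL_{ji}(cs)|\to 0$ in $L^2$) is in fact a touch cleaner than the paper's corresponding display.
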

 \begin{proof}
By the triangular inequalities for metrics, to prove tightness of the multidimensional process 
$\{(\bar M (n \, c, \, t), t>0)\}_{c>0}$, it is enough to show that any of its coordinate processes
are tight. By Theorem~C.9 in Robert \cite{robert:2000}, this can be done by showing that: given $i,j$, for
any fixed $T, \eta>0$, 
there exists $\delta>0$ such that
\begin{equation}\label{conv}
\Pr\{\omega_{\bar M_{ji}(n \, c, \, \cdot)}(\delta) >\eta \}<\epsilon \,
\end{equation}
where, for a given function $f$, the modulus of continuity on $[0,T]$ is defined as
$$\omega_f(\delta) = \sup\{ |f(t)-f(s)| : s,t\leq T, |t-s|<\delta\}.$$
Since 
$\bar M_{ji}(n \, c, \, t) = n + c^{-1} \, \Delta\mL_{ji}(c\, t)
+ c^{-1} \, \Delta\aL_{ji}(c\, t)-(\mL_{ji}(c\, t))$ with 
$\Delta\mL_{ji}(t)=\mL_{j'i}(t)-\mL_{ji}(t)$ and 
$\Delta\aL_{ji}(t)=\aL_{j'i}(t)-\aL_{ji}(t)$,
it is enough to prove that relation (\ref{conv}) is valid separately for the processes
$c^{-1} \Delta\mL_{ji}(c\,t)$ and $c^{-1} \Delta\aL_{ji}(c\,t)$.
We have, for $T>0$ and $\eta >0$,
\begin{eqnarray}\label{Lambda.tightness}
&&
\Pr\Big\{\sup_{s,t\leq T; |t-s|<\delta} 
\Big|\frac{\Delta\aL_{ij}(c \, t)}{c}-\frac{\Delta\aL_{ij}(c \, s)}{c}\Big| >\eta \Big\} 
\nonumber \\
&&\quad\quad =
\Pr\Big\{\sup_{s,t\leq T; |t-s|<\delta} \Big|
\int_s^t \left(
  \frac{M_{j'i}(c\,u)}{M_{j'}(c\,u)} \, \mu_{j'i} 
-  \frac{M_{ji}(c\,u)}{M_j(c\,u)} \, \mu_{ji}
 \right) \,  du 
\Big| >\eta \Big\} \nonumber \\
&&\quad\quad \leq
\Pr\Big\{\sup_{s,t\leq T; |t-s|<\delta} 
\int_s^t \left|
  \frac{M_{j'i}(c\,u)}{M_{j'}(c\,u)} \, \mu_{j'i} 
-  \frac{M_{ji}(c\,u)}{M_j(c\,u)} \, \mu_{ji} 
 \right| \,  du  >\eta \Big\} \nonumber \\
&&\quad\quad \leq
\Pr\Big\{\sup_{s,t\leq T; |t-s|<\delta} 
 2 |t-s| \mu >\eta \Big\} = 0 
\end{eqnarray}
with $\delta < \eta/(2\mu)$ and where $\mu = \max\{i,j:\mu_{ji}\}$.

For the martingale  $c^{-1} \, \Delta\mL_{ji}(c\, t)$, we have that
\begin{equation}
\Pr\Big\{\sup_{s,t\leq T; |t-s|<\delta} 
\Big|\frac{\Delta\mL_{ji}(c\, t)}{c}-\frac{\Delta\mL_{ji}(c\, s)}{c}\Big|
>\eta \Big\}
=
\Pr\Big\{\sup_{t\leq T} \frac{\Delta\mL_{ji}(c\, t)}{c} > \frac{\eta}{2} \Big\} \nonumber
\leq
\frac{4}{c^2\eta^2} \bE\left[(\Delta\mL_{ji}(c\, t))^2\right]
\end{equation}
where the last inequality follows by applying the Doob's inequality.
Having that $\bE\left[(\Delta\mL_{ji}(c\, T))^2\right] \leq 2 \mu c T$ we have that
for  $c >  8 \mu T/(\epsilon \eta^2)$ the probability is bounded above by $\epsilon$, as required.%
 \end{proof}

The tightness property ensures the relative compactness, therefore from every sequence
$\{(\bar M (n \, c, \, t), t>0)\}_{c>0}$, with $M (n \, c, \, 0) = n \, c$ and $n$ fixed,
it is possible to extract a convergent subsequence. 
The following proposition ensures that any limit process will be given by a fluid solution.

\begin{proposition}
Assume that a sequence of processes $\{(\bar M (n \, c, \, t), t>0)\}_{c>0}$ converges to a limit  
process $m(t)$ as $c\to\infty$. Then, $m(t)$ is almost surely continuous and it is a fluid solution
as defined in Definition \ref{def:fluid.sol}.
\end{proposition}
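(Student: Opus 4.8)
\textit{Proof proposal.} The plan is to carry out the standard pathwise verification that any subsequential limit of the scaled processes is a fluid solution. First I would apply the Skorokhod representation theorem so that, on a common probability space, the given convergence $\bar M^c\to m$ holds almost surely; since the prelimit paths lie in $D([0,\infty),\bR^{\mJ\times\mI})$ and the limit will turn out to be continuous, this is in fact uniform convergence on compact time intervals. Next, from the Lipschitz bound \eqref{Lambda.Lipschitz} the rescaled cumulative-output processes $\bar\aL^c_{ji}(t):=c^{-1}\aL_{ji}(ct)$ are non-decreasing, vanish at $t=0$, and are Lipschitz uniformly in $c$ with constant $\mu:=\max_{i,j}\mu_{ji}$; so by Arzel\`a--Ascoli they are relatively compact in $C([0,\infty),\bR_+)$ for uniform-on-compacts convergence. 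Passing to a subsequence (used throughout what follows), I may assume in addition $\bar\aL^c_{ji}\to\aL_{ji}$ almost surely, uniformly on compacts, with each $\aL_{ji}$ non-decreasing, Lipschitz with constant $\mu$, and $\aL_{ji}(0)=0$; this gives condition \eqref{fluidequns3}, and also $\bar M^c_j:=\sum_{i:j\in i}\bar M^c_{ji}\to m_j$.

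Second, I would pass to the limit in the exact prelimit identities recalled in the excerpt. The martingale terms vanish: by Doob's $L^2$ inequality and the fact that the predictable bracket of each $\mL_{ji}$ grows at most linearly in time, $\bE\big[\sup_{s\le T}(c^{-1}\mL_{ji}(cs))^2\big]=O(1/c)$, so, along the subsequence, $\sup_{s\le T}|c^{-1}\mL_{ji}(cs)|\to0$ almost surely for every $T$. Letting $c\to\infty$ in $\bar M^c_{ji}(t)=\bar M^c_{ji}(0)+c^{-1}\big(\mL_{j'i}(ct)-\mL_{ji}(ct)\big)+\big(\bar\aL^c_{j'i}(t)-\bar\aL^c_{ji}(t)\big)$ then yields the flow-balance relation \eqref{fluidequns1} (with $m_{ji}(0)$ the initial fluid levels). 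Since the right-hand side is Lipschitz, $m$ is Lipschitz, hence almost surely continuous, and $m_{ji}$ and $\aL_{ji}$ are absolutely continuous, differentiable almost everywhere. Passing to the limit in \eqref{Lambda.Lipschitz} gives $\sum_{i:j\in i}\mu_{ji}^{-1}\big(\aL_{ji}(t)-\aL_{ji}(s)\big)\le t-s$, i.e.\ \eqref{fluidequns2}; and dividing the (constant-in-time) route totals $\sum_{j\in i}M^c_{ji}(t)=cn_i+u_{c,i}$ by $c$ and letting $c\to\infty$ gives \eqref{fluidequns5}.

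Third --- the crux --- I would verify the processor-sharing relation \eqref{fluidequns4}. The key representation, already established in the excerpt, is $\bar\aL^c_{ji}(t)=\int_0^t \frac{\bar M^c_{ji}(s)}{\bar M^c_j(s)}\,\mu_{ji}\,ds$ with the convention $0/0:=0$. Fix $t_0$ with $m_j(t_0)>0$; by continuity of $m_j$ there are $\delta,\varepsilon>0$ with $m_j\ge2\varepsilon$ on $[t_0-\delta,t_0+\delta]$, and hence, by the uniform convergence $\bar M^c_j\to m_j$, also $\bar M^c_j\ge\varepsilon$ there for all large $c$. The integrand is then bounded by $1$ and converges uniformly on $[t_0-\delta,t_0+\delta]$ to $m_{ji}/m_j$ (a ratio of uniformly convergent sequences whose denominators stay bounded away from $0$). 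Therefore, for $t_0-\delta\le t'\le t''\le t_0+\delta$, one has $\bar\aL^c_{ji}(t'')-\bar\aL^c_{ji}(t')\to\int_{t'}^{t''}\mu_{ji}\,\frac{m_{ji}(s)}{m_j(s)}\,ds$, while the left-hand side also tends to $\aL_{ji}(t'')-\aL_{ji}(t')$; so on every interval contained in the open set $\{s:m_j(s)>0\}$, $\aL_{ji}$ is the indefinite integral of $s\mapsto\mu_{ji}\,m_{ji}(s)/m_j(s)$. Differentiating (Lebesgue's theorem) gives $\dL_{ji}(s)=\mu_{ji}\,m_{ji}(s)/m_j(s)$ for almost every $s$ with $m_j(s)>0$, which is \eqref{fluidequns4}. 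Together with the first two steps, $(m,\aL)$ then satisfies \eqref{fluidequns1}--\eqref{fluidequns5} and $m$ is almost surely continuous, so $m$ is a fluid solution in the sense of Definition \ref{def:fluid.sol}.

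I expect the main obstacle to be exactly this last step: one must promote the convergence of the empirical per-route service fractions $\bar M^c_{ji}/\bar M^c_j\to m_{ji}/m_j$ --- which could a priori degenerate at times where $m_j$ is small, since then $M_j(cs)$ need not grow like $c$ and the fraction need not be near its limiting value --- to an identity for the time-derivative of $\aL_{ji}$ valid at almost every point of $\{m_j>0\}$. Localizing to time intervals on which the continuous limit $m_j$ is bounded away from $0$ (where $\bar M^c_j\to m_j$ forces $M_j(cs)\sim c\,m_j(s)$) is what makes this work, and is the one place the processor-sharing dynamics are genuinely used; the remaining items (\eqref{fluidequns1}--\eqref{fluidequns3}, \eqref{fluidequns5}, and continuity) are routine limit passages, modulo the standard tightness/Skorokhod machinery already set up above.
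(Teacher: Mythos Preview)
Your proposal is correct and follows essentially the same route as the paper: Skorokhod representation to upgrade to almost-sure convergence, elimination of the martingale terms via Doob's inequality, passage to the limit in the integral identity $\bar\aL^c_{ji}(t)=\int_0^t \mu_{ji}\,\bar M^c_{ji}(s)/\bar M^c_j(s)\,ds$, and---for the processor-sharing relation~\eqref{fluidequns4}---localization to a neighborhood of a time where $m_j>0$ so that bounded convergence applies to the integrand. The only cosmetic difference is that you invoke Arzel\`a--Ascoli explicitly for the tightness of $\bar\aL^c$, whereas the paper relies on the tightness already established in the preceding proposition via the estimate~\eqref{Lambda.tightness}.
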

 \begin{proof}
Using the Skorohod's Representation theorem, see Robert \cite[Theorem C.8]{robert:2000},
we can assume that all the elements of the sequence
are random processes defined on the same probability space with probability $\bP$ 
and the convergence is $\bP$-a.s.

Since we have that for any $c$
$$
\bar M_{ji}(n \, c, \, t) = n_{ji} + \frac{\Delta\mL_{ji}(c\, t)}{c}  
+  \frac{\aL_{j'i}(c\, t)}{c} -  \frac{\aL_{ji}(c\, t)}{c},
$$
passing to the limit and using bounded convergence for the integral we get
$$
\bar m_{ji}(t) = n_{ji} +  \aL_{j'i}(t) -  \aL_{ji}(t) 
$$
where 
$$
\aL_{ji}(t) = \lim_{c\to\infty}   \int_0^t  \frac{M_{ji}(c\,u)}{M_j(c\,u)} \, \mu_{ji}  \,  du,
$$
which exists by (\ref{Lambda.tightness}). In particular, if $m_j(t)>0$ for some $t$, it will be 
positive in a neighborhood $B(t)$ of $t$ by continuity.
It follows that for $t'<t''$ and $t',t''\in B(t)$,
$$
\aL_{ji}(t'') - \aL_{ji}(t') = \lim_{c\to\infty}   \int_{t'}^{t''} 
\frac{M_{ji}(c\,u)}{M_j(c\,u)} \, \mu_{ji}  \,  du 
=    \int_{t'}^{t''} \lim_{c\to\infty}
\frac{M_{ji}(c\,u)}{M_j(c\,u)} \, \mu_{ji}  \,  du 
=  \int_{t'}^{t''} \frac{m_{ji}(u)}{m_j(u)} \, \mu_{ji}  \,  du,
$$
where in the second equality we have used the bounded convergence theorem, which implies
equation \eqref{fluidequns4}, i.e. 
$$\dL_{ji}(t) = \frac{d\aL_{ji}(t)}{dt} = \mu_{ji}  
\, \frac{m_{ji}(t)}{m_j(t)} \quad \mbox{ as } m_j (t)>0.$$
The additional conditions satisfied by $\aL_{ji}(t)$ easily follow as equivalent property holds for
the approximating processes $\{c^{-1} \, \aL_{ji}(c\,t), c>0\}$, in particular the Lipschitz
condition follows by \eqref{Lambda.Lipschitz}.
   \end{proof}

\subsection{Proof of convergence of bottleneck queues.}\label{proof3}

We now focus on proving Theorem~\ref{Thrm3}. For technical reasons that we will explain shortly, our proof is quite involved and requires a number of lemmas.
For a solution $(m(t), \aL(t))$ of the fluid model \eqref{fluidequns}, let $\dL_{ji}(t)$ be the derivative of
$\aL_{ji}(t)$, when it exists. Let also $l_{ji}$ be the queue before queue $j$ on route $i$ and
$x_{ji}$ be the next queue after queue $j$ on route $i$.

We recall the function $\beta(m(t))$:
\begin{equation}\label{Lyabeta}
\beta(m(t)) \bydef \sum_{j\in\mJ} \sum_{\substack{i: j\in i\\ m_{ji}(t)>0}} m_{ji}(t) \log \frac{m_{ji}(t)\mu_{ji}}{m_{j}(t)}.
\end{equation}
The terms $m_{ji}(t)$ are Lipschitz and thus are differentiable for almost every $t$. So, if $m_{ji}(t)>0$ then we can differentiate the $ij^{th}$ summand of \eqref{Lyabeta}. However, if 
$m_{ji}(t)=0$, taking a derivative become a significantly more technical issue.
The following proposition ensures we can differentiate the summands of $\beta(m(t))$ and the subsequent lemma ensures summands have zero derivative when $m_{ji}(t)=0$.

\begin{proposition} \label{beta abs cont} For each $i$ and $j\in i$ and
for any time interval $[t_0,t]$ with $t>t_0>0$, there exists a constant $D>0$ such that for any
$t_1$, $t_2\in [t_0,t]$
\begin{equation}\label{lipt.cond}
|\beta(m(t_2)) - \beta(m(t_1))| \leq D |t_2 - t_1|\ . 
\end{equation}
Moreover, the same locally-Lipschitz condition applies to each summand of $\beta(m(t))$, \eqref{Lyabeta}.
\end{proposition}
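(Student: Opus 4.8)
The plan is to control the derivative of each summand $f_{ji}(t) \bydef m_{ji}(t) \log\big(m_{ji}(t)\mu_{ji}/m_j(t)\big)$ separately, handling the regime where $m_{ji}(t)$ is bounded away from $0$ differently from the regime where $m_{ji}(t)$ is small. On the interval $[t_0,t]$, both $m_{ji}(\cdot)$ and $m_j(\cdot)$ are Lipschitz (with a constant $L_0$ depending only on the $\mu$'s, by \eqref{fluidequns1}--\eqref{fluidequns2}), and $m_j(\cdot) = \sum_{i:j\in i} m_{ji}(\cdot)$, so $m_{ji}\le m_j$ always. The function $(x,y)\mapsto x\log(x\mu_{ji}/y)$ on $\{0\le x\le y\}$ is continuous up to the boundary $x=0$ (where it extends by $0$), but its gradient blows up as $x\to 0$; nonetheless the relevant bound is that $x\log(x/y)$ has a \emph{bounded difference} quotient because $t\mapsto x(t)\log x(t)$ is itself absolutely continuous with an $L^1$ derivative on $[t_0,t]$. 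So the first step is to reduce the claim for $f_{ji}$ to the one-variable statement: $t\mapsto m_{ji}(t)\log m_{ji}(t)$ satisfies a local Lipschitz bound on $[t_0,t]$, together with the easy fact that $t\mapsto m_{ji}(t)\log(\mu_{ji}/m_j(t))$ is locally Lipschitz once we know $m_j(\cdot)$ stays bounded away from $0$ on $[t_0,t]$ when $m_{ji}$ is not identically zero there.

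The key lemma behind this is a \emph{lower bound on $m_j(t)$} on the interval of interest. Here I would use that $t_0>0$: I claim that for each $j$ with $m_j(t)>0$ for some $t\in[t_0,\bar t]$, in fact $m_j$ is bounded below by a positive constant on all of $[t_0,\bar t]$. Indeed, if $m_j$ ever hits $0$ it can only have left a previous positive value at Lipschitz rate, and — crucially — the fluid dynamics \eqref{fluidequns1}, \eqref{fluidequns4} show that once a queue is empty the inflow minus outflow cannot be negative, so a queue that is positive somewhere on $[t_0,\bar t]$ and hits $0$ must do so by decreasing; combined with the conservation law \eqref{fluidequns5} across the route and the fact that total route mass $n_i$ is constant, one gets that $m_j$ cannot reach $0$ from a positive value at $t_0$ in finite time unless it is monotone — and a uniform positive lower bound on $[t_0,\bar t]$ follows from compactness once we know the set $\{t: m_j(t)=0\}$ is handled. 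For the genuinely delicate case where $m_j(t)=0$ on part of $[t_0,\bar t]$, then necessarily $m_{ji}(t)=0$ there too for all $i\in\mI(j)$, and on such a sub-interval $f_{ji}\equiv 0$, so it contributes nothing; the only issue is matching at the boundary of such a sub-interval, where $m_{ji}\to 0^+$ and we again invoke continuity of $x\log(x/y)$ up to $x=0$.

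Assembling: write $[t_0,\bar t]$ as the (relatively closed) set where $m_j=0$ — on which $f_{ji}=0$ — and its complement, which is a countable union of open intervals on each of which $m_j\ge\varepsilon>0$ for a uniform $\varepsilon$ (by the lower-bound argument, using that the endpoints accumulate only at points where $m_j=0$, and there $f_{ji}$ already vanishes). On each such interval, $f_{ji}$ is a $C^1$-composition of Lipschitz functions with a gradient bounded by a constant $D_{ji}$ depending on $L_0$, $\varepsilon$, $\mu_{ji}$ and $\sup_{[t_0,\bar t]} m_j$ (finite by \eqref{fluidequns5}); so $|f_{ji}(t_2)-f_{ji}(t_1)|\le D_{ji}|t_2-t_1|$ within an interval, and by adding boundary contributions (which are zero since $f_{ji}=0$ on the closed complement) the bound extends to arbitrary $t_1,t_2\in[t_0,\bar t]$. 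Summing over the finitely many pairs $(j,i)$ with $j\in i$ gives \eqref{lipt.cond} with $D=\sum_{j}\sum_{i:j\in i} D_{ji}$, and the ``moreover'' is exactly the per-summand bound just established. The main obstacle is precisely the uniform lower bound on $m_j$ away from the zero set: without $t_0>0$ a queue could approach $0$ at rate matching the logarithmic singularity, so the argument genuinely uses $t_0>0$ and the structure of the fluid equations — in particular that an empty queue's fluid level is non-decreasing — to rule this out.
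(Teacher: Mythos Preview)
Your approach has a genuine gap in two related places. First, the assertion that on the complement of $\{m_j=0\}$ one has a \emph{uniform} lower bound $m_j\ge\varepsilon>0$ is not justified and is in fact false in general: if $m_j(a)=0$ at an endpoint $a$ of one of your open intervals, then $m_j$ tends to $0$ continuously as $t\to a$ on that interval, so no uniform $\varepsilon$ exists. Your argument that an empty queue has nonnegative net inflow only prevents $m_j$ from going negative; it does not prevent $m_j$ from touching zero and rising again, nor does it yield a quantitative floor. Second, even granting a lower bound on $m_j$, your reduction to ``$t\mapsto m_{ji}(t)\log m_{ji}(t)$ is locally Lipschitz'' is unsupported: for a merely Lipschitz nonnegative function $x(t)$ that touches zero (e.g.\ $x(t)=|t-a|$), the composition $x(t)\log x(t)$ has derivative $\pm(\log|t-a|+1)$ and is \emph{not} Lipschitz near $a$. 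Nothing in your sketch rules out such linear-order contact with zero for $m_{ji}$.

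The paper sidesteps both difficulties by changing variables: the summand $m_{ji}\log(m_{ji}\mu_{ji}/m_j)$ is rewritten as $m_{ji}\log\dL_{ji}$, and the crucial lemma (proved via a propagation argument along each route, using that some queue always carries at least the average route mass) is a uniform positive lower bound on the \emph{throughput rate} $\dL_{ji}(t)\ge\tilde c(t_0)>0$ on $[t_0,t_0+T]$, not on the queue length $m_j$. With $\dL_{ji}$ bounded away from zero and above by $\mu_{\max}$, $|\log\dL_{ji}|$ is bounded, and the Lipschitz estimate for $m_{ji}\log\dL_{ji}$ follows by a direct two-point inequality (splitting off $|m_{ji}(t_2)-m_{ji}(t_1)|\cdot|\log\dL_{ji}|$ and using concavity of $\log$ together with $m_{ji}/\dL_{ji}=m_j/\mu_{ji}$ for the remaining term). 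That rate lower bound is the missing idea; it is what makes the logarithmic singularity disappear without needing any floor on $m_j$ or $m_{ji}$.
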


\begin{lemma} \label{diff at zero}
For functions $f:\bR_+\rightarrow\bR_+$ and $p:\bR_+\rightarrow [0,1]$, if $t>0$ is such that
the derivative of $f$ exists at $t$, the derivative of $f\log p$ exists at $t$ and
$f(t)=0$ then 
\begin{equation*}
 \frac{df}{dt}=0 \qquad\text{and}\qquad \frac{df\log p}{dt} = 0.
\end{equation*}
\end{lemma}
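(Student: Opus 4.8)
The plan is to recognize both assertions as instances of the first-order optimality (Fermat) condition at an interior extremum, so that essentially no computation is needed.

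First I would treat $f$. Since $f$ takes values in $\bR_+=[0,\infty)$ and $f(t)=0$, the point $t$ is a global minimizer of $f$ on $\bR_+$; because $t>0$ it lies in the interior of the domain. Hence for $h>0$ small the difference quotient $\big(f(t+h)-f(t)\big)/h = f(t+h)/h\ge 0$, while for $h<0$ small it is $\le 0$. By hypothesis the derivative of $f$ exists at $t$, so the left and right limits of the difference quotient coincide, which forces $df/dt=0$ at $t$. Next I would treat $g\bydef f\log p$, adopting the convention $g(s)=0$ whenever $f(s)=0$ (this matches the convention used in defining $\beta$, where the $ji$-summand is dropped when $m_{ji}=0$, so it is the right convention for the intended application). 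Since $p$ takes values in $[0,1]$ we have $\log p\le 0$, and $f\ge 0$, so $g\le 0$ on all of $\bR_+$ (it may take the value $-\infty$ where $p=0$ and $f>0$, but this does not affect the sign). As $f(t)=0$, we get $g(t)=0$, so $t$ is a global maximizer of $g$; again $t>0$ is interior, so for $h>0$ small $g(t+h)/h\le 0$ and for $h<0$ small $g(t+h)/h\ge 0$. The assumed existence of the derivative of $f\log p$ at $t$ then forces $d(f\log p)/dt=0$ at $t$.

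I do not expect a real obstacle here: the only delicate point is that $f\log p$ may equal $-\infty$ near $t$, but since the argument only uses the sign of the one-sided difference quotients together with the (hypothesized) existence of the two-sided derivative, this causes no trouble; and the assumption $t>0$ is used solely to guarantee that $t$ is an interior point of $\bR_+$ so that both one-sided limits are available. If a cleaner phrasing is wanted, one can simply invoke Fermat's theorem on local extrema directly for each of $f$ and $f\log p$.
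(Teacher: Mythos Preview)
Your proposal is correct and is essentially the same argument as the paper's own proof: both use that $f(t)=0$ (respectively $f(t)\log p(t)=0$) is a global minimum (respectively maximum), so the right- and left-hand difference quotients have opposite signs and the assumed existence of the derivative forces it to be zero. The only cosmetic difference is that you package this as Fermat's interior-extremum condition, whereas the paper writes out the one-sided limits explicitly.
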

Proposition \ref{beta abs cont} is based on Proposition 4.2 of Bramson \cite{Br96}. Both
Proposition \ref{beta abs cont} and Lemma \ref{diff at zero} are proven in Appendix \ref{appendix2}.
It will also be useful to have the following lemma, which is proven in Appendix \ref{appendix2}.
\begin{lemma} \label{lamba diff}
 For almost every $t$, $0< \dL_{ji}(t) \leq \mu_{max}$, where $\mu_{max}=\max\{ \mu_{ji}:
i\in\mI, j\in i\}$.
\end{lemma}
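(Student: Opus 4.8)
The plan is to prove the two bounds separately, both on the common set $\mathcal{T}$ of times $t>0$ at which every coordinate function $\aL_{ji}$ and $m_{ji}$ is differentiable. Condition \eqref{fluidequns2} makes each $\aL_{ji}$ Lipschitz, and \eqref{fluidequns1} then makes each $m_{ji}$ Lipschitz; since there are only finitely many such functions and each is differentiable almost everywhere, $\mathcal{T}$ has full Lebesgue measure. Throughout I assume $n_i>0$ for every $i\in\mI$; a route carrying no fluid is degenerate and can be discarded.

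For the \emph{upper bound} I would fix $t\in\mathcal{T}$ and $j\in\mJ$, divide \eqref{fluidequns2} by $t-s$ for $s<t$, and let $s\uparrow t$ to get $\sum_{i:j\in i}\mu_{ji}^{-1}\dL_{ji}(t)\le 1$. Because each $\aL_{ji}$ is non-decreasing by \eqref{fluidequns3}, every summand is non-negative, so already $\mu_{ji}^{-1}\dL_{ji}(t)\le 1$, i.e.\ $\dL_{ji}(t)\le\mu_{ji}\le\mu_{max}$. This part is routine.

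The substance is the \emph{strict lower bound} $\dL_{ji}(t)>0$. Here I would fix $t\in\mathcal{T}$, a route $i$ and a queue $j\in i$, and walk backwards along route $i$: set $j_0=j$ and $j_{r+1}=l_{j_r i}$, the queue preceding $j_r$ on route $i$. The key observation is that whenever $m_{j_r i}(t)=0$, differentiating the identity $m_{j_r i}(t)=\aL_{j_{r+1} i}(t)-\aL_{j_r i}(t)$ from \eqref{fluidequns1} and invoking Lemma~\ref{diff at zero} (with $f=m_{j_r i}$) gives $m_{j_r i}'(t)=0$, hence $\dL_{j_r i}(t)=\dL_{j_{r+1} i}(t)$; that is, the instantaneous throughput passes unchanged through an empty queue. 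Now the conservation law \eqref{fluidequns5}, $\sum_{k}m_{j^i_k i}(t)=n_i>0$, forbids all $k_i$ queues of route $i$ being simultaneously empty, so there is a least index $r^\star\le k_i-1$ with $m_{j_{r^\star} i}(t)>0$; for that queue $m_{j_{r^\star}}(t)\ge m_{j_{r^\star} i}(t)>0$, so \eqref{fluidequns4} gives $\dL_{j_{r^\star} i}(t)=\mu_{j_{r^\star} i}\,m_{j_{r^\star} i}(t)/m_{j_{r^\star}}(t)>0$. Chaining the equalities $\dL_{j_0 i}(t)=\dL_{j_1 i}(t)=\cdots=\dL_{j_{r^\star} i}(t)$ then forces $\dL_{ji}(t)>0$. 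As $\mJ$ and $\mI$ are finite, this holds for all pairs $(j,i)$ off a single null set, which is the assertion.

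The step I expect to be the main obstacle is exactly this last one: at a time $t$ when queue $j$ is momentarily empty of route-$i$ fluid, \eqref{fluidequns4} says nothing directly about $\dL_{ji}(t)$, and one must combine Lemma~\ref{diff at zero} (to propagate the throughput through the empty queues along the cycle) with the per-route conservation constraint \eqref{fluidequns5} (to locate a queue on that same cycle where \eqref{fluidequns4} does apply and returns a strictly positive rate). Everything else — the upper bound and the almost-everywhere differentiability bookkeeping — is straightforward.
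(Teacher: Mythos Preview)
Your proof is correct and, for the substantive lower bound, essentially coincides with the paper's: both arguments propagate the throughput across empty queues on a route cycle (using $m_{ji}(t)=0\Rightarrow m_{ji}'(t)=0$ together with \eqref{fluidequns1}) until they reach a queue with $m_{j^\star i}(t)>0$, where \eqref{fluidequns4} delivers a strictly positive rate; the only cosmetic difference is that you walk \emph{backwards} from the queue in question to find a non-empty predecessor, whereas the paper starts at a non-empty queue and walks \emph{forward}.

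For the upper bound the two proofs differ. The paper reuses the lower-bound chain to observe that every $\dL_{ji}(t)$ equals the rate at some non-empty queue $j^\star$ on route $i$, and then bounds that rate via \eqref{fluidequns4} by $\mu_{j^\star i}\le\mu_{max}$. You instead differentiate the capacity constraint \eqref{fluidequns2} and use the non-negativity of each summand from \eqref{fluidequns3} to get $\dL_{ji}(t)\le\mu_{ji}\le\mu_{max}$ directly. Your route is shorter and yields the slightly sharper per-queue bound $\dL_{ji}(t)\le\mu_{ji}$ without any detour through the empty-queue argument; the paper's route has the virtue of recycling the lower-bound machinery so that both inequalities fall out of the same observation.
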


Recalling that $x_{ji}$ is the next queue on route $i$ after $j$ and that $l_{ji}$ is the queue
before queue $j$ on route $i$, we can now prove the following proposition.
\begin{proposition} \label{diff prop}
For almost every $t$,
\begin{equation}\label{beta derivative}
\frac{d\beta(m(t))}{dt}= -\sum_{i\in\mI}\sum_{j\in i} \dL_{ji}(t)\log
\frac{\dL_{ji}(t)}{\dL_{x_{ji}i}(t)}. 
\end{equation}
\end{proposition}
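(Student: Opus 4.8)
The plan is to differentiate $\beta(m(t))$ term by term, cancel the contributions coming from the denominator $m_j(t)$, and then reorganise what is left by feeding in the flow-balance relation~\eqref{fluidequns1} and the processor-sharing relation~\eqref{fluidequns4}. Proposition~\ref{beta abs cont} already tells us that $\beta(m(t))$ and each of its summands $g_{ji}(t):=m_{ji}(t)\log\big(m_{ji}(t)\mu_{ji}/m_j(t)\big)$ (with the usual convention $0\log0=0$) are locally Lipschitz, hence differentiable for almost every $t$, so that
\[
\frac{d}{dt}\beta(m(t))\;=\;\sum_{j\in\mJ}\ \sum_{i:\,j\in i}\ \frac{d}{dt}\,g_{ji}(t)\qquad\text{for a.e. }t.
\]
By Lemma~\ref{diff at zero}, at a time $t$ with $m_{ji}(t)=0$ one has both $\tfrac{d}{dt}g_{ji}(t)=0$ and $\dot m_{ji}(t)=0$. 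Moreover, combining~\eqref{fluidequns4} with Lemma~\ref{lamba diff}, for a.e. $t$ every queue with $m_j(t)>0$ actually has $m_{ji}(t)>0$ for all $i\in\mI(j)$ (else $\dL_{ji}(t)=0$, contradicting Lemma~\ref{lamba diff}); and, since $m_{ji}$ is Lipschitz, $m_j=\sum_{i:j\in i}m_{ji}$ is differentiable a.e. with $\dot m_j=\sum_{i:j\in i}\dot m_{ji}$.

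Next I would compute, at a time $t$ with $m_{ji}(t)>0$ (so $m_j(t)>0$),
\[
\frac{d}{dt}\Big(m_{ji}\log\tfrac{m_{ji}\mu_{ji}}{m_j}\Big)\;=\;\dot m_{ji}\,\log\tfrac{m_{ji}\mu_{ji}}{m_j}\;+\;\dot m_{ji}\;-\;\tfrac{m_{ji}}{m_j}\,\dot m_j,
\]
and sum over $i\in\mI(j)$ for a fixed $j$ with $m_j(t)>0$. Because $\sum_{i:j\in i}\dot m_{ji}(t)=\dot m_j(t)$ and $\sum_{i:j\in i}m_{ji}(t)=m_j(t)$, the last two groups of terms cancel, and (recalling that indices with $m_j(t)=0$, equivalently $m_{ji}(t)=0$, contribute nothing) we are left with $\frac{d}{dt}\beta(m(t))=\sum_{j}\sum_{i:\,j\in i}\dot m_{ji}(t)\,\log\big(m_{ji}(t)\mu_{ji}/m_j(t)\big)$, the sum running over pairs with $m_j(t)>0$. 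On those pairs~\eqref{fluidequns4} gives $m_{ji}(t)\mu_{ji}/m_j(t)=\dL_{ji}(t)$, and since $\dL_{ji}(t)\in(0,\mu_{max}]$ a.e. by Lemma~\ref{lamba diff}, the omitted indices (for which $\dot m_{ji}(t)=0$) can be reinstated at no cost. Thus, for a.e. $t$,
\[
\frac{d}{dt}\beta(m(t))\;=\;\sum_{i\in\mI}\ \sum_{j\in i}\ \dot m_{ji}(t)\,\log\dL_{ji}(t).
\]

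Finally I would substitute $\dot m_{ji}(t)=\dL_{l_{ji}i}(t)-\dL_{ji}(t)$, obtained by differentiating~\eqref{fluidequns1}, and reindex each route. On route $i$ the successor map $j\mapsto x_{ji}$ is a cyclic permutation of the queues of $i$ with inverse $j\mapsto l_{ji}$, so $\sum_{j\in i}\dL_{l_{ji}i}\log\dL_{ji}=\sum_{j\in i}\dL_{ji}\log\dL_{x_{ji}i}$, whence
\[
\frac{d}{dt}\beta(m(t))\;=\;\sum_{i\in\mI}\sum_{j\in i}\dL_{ji}(t)\big(\log\dL_{x_{ji}i}(t)-\log\dL_{ji}(t)\big)\;=\;-\sum_{i\in\mI}\sum_{j\in i}\dL_{ji}(t)\,\log\frac{\dL_{ji}(t)}{\dL_{x_{ji}i}(t)},
\]
which is the claimed identity~\eqref{beta derivative}. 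The one genuinely delicate point is the behaviour at times where some $m_{ji}(t)$ vanishes --- justifying term-by-term differentiation there and knowing those summands have vanishing derivative --- but that is precisely what Proposition~\ref{beta abs cont} and Lemma~\ref{diff at zero} have been set up to deliver, with Lemma~\ref{lamba diff} keeping $\log\dL_{ji}(t)$ finite a.e.; the rest is algebraic bookkeeping.
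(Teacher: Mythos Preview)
Your proof is correct and follows essentially the same route as the paper's: differentiate term by term (justified by Proposition~\ref{beta abs cont}), use Lemma~\ref{diff at zero} to discard summands with $m_{ji}(t)=0$, cancel the $\dot m_{ji}$ and $(m_{ji}/m_j)\dot m_j$ contributions via $\sum_i m_{ji}=m_j$, replace $m_{ji}\mu_{ji}/m_j$ by $\dL_{ji}$ through~\eqref{fluidequns4}, reinstate the dropped indices using Lemma~\ref{lamba diff}, substitute $\dot m_{ji}=\dL_{l_{ji}i}-\dL_{ji}$ from~\eqref{fluidequns1}, and reindex cyclically along each route. Your extra observation that a.e.\ $m_j(t)>0$ forces $m_{ji}(t)>0$ for every $i\ni j$ (via~\eqref{fluidequns4} and Lemma~\ref{lamba diff}) is a neat way to streamline the bookkeeping, but it does not change the underlying argument.
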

 \begin{proof}
Our processes in Proposition \ref{beta abs cont} are absolutely continuous and thus almost
everywhere
differentiable. So for almost every $t$, we can differentiate the terms
$\aL_{ji}(t)$, $m_{ij}(t)$, $m_j(t)$, $m_{ij}(t)\log ( m_{ji}(t)\mu_{ji}/m_j(t) )$, and
$\beta(m(t))$. Differentiating, we obtain
\begin{subequations}\label{beta diff}
\begin{align}
 \frac{d\beta(m(t))}{dt}
&=\sum_{j\in\mJ}  \sum_{\substack{i : j\in i\\ m_{ji}(t)>0}} \Big(\frac{dm_{ji}\log
m_{ji}}{dt}  -
\frac{dm_{ji}}{dt} \log \mu_{ji}
\Big) - \sum_{\substack{j\in\mJ:\\ m_{j}(t)>0}} \frac{dm_j\log m_j}{dt}\label{beta diff2}\\
&=\sum_{j\in\mJ}  \sum_{\substack{i : j\in i\\ m_{ji}(t)>0}} \Big(\frac{dm_{ji}}{dt} \log
m_{ji} + \frac{dm_{ji}}{dt} -
\frac{dm_{ji}}{dt} \log \mu_{ji}
\Big) - \sum_{\substack{j\in\mJ:\\ m_{j}(t)>0}} \left( \frac{dm_j}{dt}\log m_j + \frac{dm_j}{dt}
\right)\label{beta diff3} \\
&= \sum_{j\in\mJ}  \sum_{\substack{i : j\in i\\ m_{ji}(t)>0}} \frac{dm_{ji}}{dt} \log
\frac{ m_{ji}(t) \mu_{ji}}{m_j(t)}\label{beta diff4}\\
&=\sum_{i\in\mI}\sum_{j\in i} \Big( \dL_{l_{ji}i}(t)-\dL_{ji}(t) \Big)\log
\dL_{ji}(t)\label{beta diff5}\\
&=\sum_{i\in\mI}\sum_{j\in i} \dL_{ji}(t) \Big(\log
\dL_{x_{ji}i}(t)-\log \dL_{ji}(t)\Big)\label{beta diff6}\\
&=-\sum_{i\in\mI}\sum_{j\in i} \dL_{ji}(t)  \log \frac{\dL_{ji}(t)}{\dL_{x_{ji}i}(t)}.
\end{align}
\end{subequations}
For the above sequence of equalities, in equality \eqref{beta diff2}, we restrict our attention to summands with $m_{ji}(t)>0$ by applying Lemma \ref{diff at zero} and Proposition \ref{beta abs cont}.
Equality \eqref{beta diff4} holds by observing that $\sum_{i:j\in i}m_{ji}=m_j$ and canceling
terms. For equality \eqref{beta diff5}, we know by our fluid model assumption \eqref{fluidequns1}
that $m'_{ji}(t)=\dL_{l_{ji}i}(t)-\dL_{ji}(t)$. In addition, we note that if
$m_{ji}>0$ then, using \eqref{fluidequns4}, $\log
\frac{m_{ji}\mu_{ji}}{m_j}=\log \dL_{ji}(t)$ and if $m_{ji}(t)=0$ then $0=m'_{ji}(t)=
\dL_{l_{ji}i}(t)-\dL_{ji}(t)$ and $\dL_{ji}(t)>0$ (by Lemma \ref{lamba diff}). Thus, we may reintroduce the
$m_{ji}(t)=0$ terms in our summation. In equality \eqref{beta diff5}, for each route, we re-interpolate
the first term in our summation. 
 \end{proof}

The next lemma, found in Cover and Thomas \cite{CoTh91}, will be of key importance in bounding our
Lyapunov function.

\begin{lemma}[Pinsker's Inequality] For the relative entropy between two discrete probability 
distributions $p=(p_j)_j$ and $q=(q_j)_j$ with the same support:
 \begin{equation*}
D(p||q)=\sum_{j} p_j \log \frac{p_j}{q_j},
\end{equation*}
the following inequality holds
\begin{equation*}
 \sqrt{D(p||q)} \geq \sum_j | p_j - q_j|.
\end{equation*}
\end{lemma}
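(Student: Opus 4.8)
The plan is to follow the classical two-step proof of Pinsker's inequality: first reduce the claim to two-point distributions, then settle the two-point case by an elementary one-variable estimate.

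For the reduction, I would put $A=\{j:p_j\ge q_j\}$ and pass to the Bernoulli distributions $\bar p=(p(A),1-p(A))$ and $\bar q=(q(A),1-q(A))$ induced by the two-cell partition $\{A,A^c\}$, writing $p(A)=\sum_{j\in A}p_j$ and likewise for $q$. By the choice of $A$ we have the identity $\sum_j|p_j-q_j|=2\big(p(A)-q(A)\big)=\sum_i|\bar p_i-\bar q_i|$, while the log-sum inequality applied separately on $A$ and on $A^c$ (equivalently, the data-processing inequality for relative entropy under the map that collapses $A$ and $A^c$ to points) gives $D(p\|q)\ge D(\bar p\|\bar q)$. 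Hence it is enough to prove the inequality when both distributions are supported on two points.

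For that two-point case, set $a=p(A)$, $b=q(A)$, so $a\ge b$, and define
\[
g(a,b)\bydef a\log\frac ab+(1-a)\log\frac{1-a}{1-b}-2(a-b)^2 .
\]
I would fix $a$ and regard $g$ as a function of $b\in(0,a]$. Then $g(a,a)=0$, and a direct differentiation, after combining the first two terms over the common denominator $b(1-b)$, yields
\[
\frac{\partial g}{\partial b}=\frac{b-a}{b(1-b)}+4(a-b)=(a-b)\Big(4-\frac1{b(1-b)}\Big)\le0,
\]
because $b(1-b)\le\tfrac14$ and $a-b\ge0$. Consequently $g(a,b)\ge g(a,a)=0$ on $(0,a]$, i.e. $D(\bar p\|\bar q)\ge 2(a-b)^2$. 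Combining this with the reduction step gives $D(p\|q)\ge\tfrac12\big(\sum_j|p_j-q_j|\big)^2$, equivalently $\sqrt{2\,D(p\|q)}\ge\sum_j|p_j-q_j|$; the slightly larger constant $\sqrt2$ is harmless here, since only the implication ``$D(p\|q)\to0\Rightarrow\sum_j|p_j-q_j|\to0$'' is used later.

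The one point requiring care is the reduction step, where cells with $p_j=q_j=0$ must be discarded, the convention $0\log0=0$ must be respected, and the cell-wise log-sum inequality must be phrased so as to cover the degenerate endpoints $p(A)\in\{0,1\}$; the two-point estimate itself is the elementary computation above. Since this is exactly the argument in Cover and Thomas \cite{CoTh91}, in the write-up I would state the reduction and cite \cite{CoTh91} for the details rather than reproduce the whole proof.
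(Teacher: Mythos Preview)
The paper does not prove this lemma at all; it simply records it with the attribution ``found in Cover and Thomas \cite{CoTh91}'' and moves on. Your proposal reproduces precisely the standard Cover--Thomas argument (reduction to a two-point distribution via the log-sum/data-processing inequality, followed by the one-variable calculus computation), so there is nothing to compare at the level of strategy---you have supplied what the paper only cites.

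You are right to flag the constant. The inequality as stated in the paper, $\sqrt{D(p\|q)}\ge\sum_j|p_j-q_j|$, is in fact \emph{false} as written: for $p=(\tfrac12+\delta,\tfrac12-\delta)$ and $q=(\tfrac12,\tfrac12)$ one has $D(p\|q)=2\delta^2+O(\delta^4)$ while $\big(\sum_j|p_j-q_j|\big)^2=4\delta^2$. The sharp form is the one you prove, $\sqrt{2\,D(p\|q)}\ge\sum_j|p_j-q_j|$. Your observation that this is harmless is also correct: in the sole application (Lemma~\ref{formal diff bound}) the inequality is used only to obtain a quadratic lower bound with \emph{some} positive constant, and the missing factor of $2$ would simply be absorbed into the constant $1/(\mu_{max}|\mJ|)$ appearing there without affecting any subsequent argument.
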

Applying Pinsker's inequality to Proposition \ref{diff prop} gives

\begin{lemma}\label{formal diff bound}
 For almost every $t$,
\begin{equation}\label{eq:formal diff bound}
 \frac{d\beta(m(t))}{dt} \leq -  \sum_{i\in \mI} \frac{1}{\mu_{max}|\mJ|}\sum_{j\in i}
\Big(\dL_{ji}(t) - \dL_{x_{ji}i}(t)\Big)^2, 
\end{equation}
where $|\mJ|$ is the size of set $\mJ$ and we recall that $\mu_{max}=\max\{ \mu_{ji}: i\in\mI, j\in
i\}$.
\end{lemma}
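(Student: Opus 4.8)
The plan is to obtain \eqref{eq:formal diff bound} directly from the exact identity for $d\beta(m(t))/dt$ in Proposition~\ref{diff prop} by recognising each route's contribution as a scaled relative entropy and then invoking Pinsker's inequality together with the positivity bound of Lemma~\ref{lamba diff}. Fix a time $t$ at which all the relevant derivatives exist, and fix a route $i$. The key structural observation is that, since route $i$ is a cycle and $x_{ji}$ is the successor of $j$ along that cycle, the map $j\mapsto x_{ji}$ is a cyclic permutation of the queue set of route $i$; hence $\sum_{j\in i}\dL_{ji}(t)=\sum_{j\in i}\dL_{x_{ji}i}(t)$, and we denote this common value by $S_i(t)$. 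By Lemma~\ref{lamba diff} we have $0<\dL_{ji}(t)\le\mu_{max}$ for every $j\in i$, so $S_i(t)>0$ and the vectors $p^{(i)}=(\dL_{ji}(t)/S_i(t))_{j\in i}$ and $q^{(i)}=(\dL_{x_{ji}i}(t)/S_i(t))_{j\in i}$ are genuine probability distributions on the queues of route $i$, both with full support.

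Next I would rewrite the $i$-th inner sum of Proposition~\ref{diff prop} as $\sum_{j\in i}\dL_{ji}(t)\log(\dL_{ji}(t)/\dL_{x_{ji}i}(t)) = S_i(t)\,D(p^{(i)}\|q^{(i)})$, which is legitimate precisely because $p^{(i)}$ and $q^{(i)}$ share the same full support. Applying Pinsker's inequality and then the elementary bound $\big(\sum_j|a_j|\big)^2\ge\sum_j a_j^2$ gives $D(p^{(i)}\|q^{(i)})\ge\big(\sum_{j\in i}|p^{(i)}_j-q^{(i)}_j|\big)^2\ge\sum_{j\in i}(p^{(i)}_j-q^{(i)}_j)^2$. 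Multiplying back by $S_i(t)$ and using $S_i(t)=\sum_{j\in i}\dL_{ji}(t)\le |i|\,\mu_{max}\le|\mJ|\,\mu_{max}$ (again by Lemma~\ref{lamba diff}) yields
\[
S_i(t)\,D(p^{(i)}\|q^{(i)}) \ \ge\ \frac{1}{S_i(t)}\sum_{j\in i}\big(\dL_{ji}(t)-\dL_{x_{ji}i}(t)\big)^2 \ \ge\ \frac{1}{\mu_{max}|\mJ|}\sum_{j\in i}\big(\dL_{ji}(t)-\dL_{x_{ji}i}(t)\big)^2 .
\]
Substituting this into the identity of Proposition~\ref{diff prop} and summing over $i\in\mI$ gives \eqref{eq:formal diff bound}.

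The only genuinely delicate point is the one isolated above: both the manipulation of the log-sum as a relative entropy and Pinsker's inequality require $p^{(i)}$ and $q^{(i)}$ to have the same support, i.e.\ that every $\dL_{ji}(t)$ with $j\in i$ is strictly positive, which is exactly the content of Lemma~\ref{lamba diff} for almost every $t$; this is the hypothesis one must not forget, but no extra work is needed. A minor bookkeeping choice is the crude bound $|i|\le|\mJ|$ used to produce the stated constant $1/(\mu_{max}|\mJ|)$; the route-dependent constant $1/(\mu_{max}|i|)$ would be slightly sharper, but the uniform constant is all that is needed in the sequel.
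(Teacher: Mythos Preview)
Your proof is correct and follows essentially the same route as the paper's: normalize the $\dL_{ji}(t)$ along each route by their sum to get two probability vectors (using Lemma~\ref{lamba diff} for positivity and the cyclic-permutation observation to see the sums agree), apply Pinsker's inequality followed by $\big(\sum_j|a_j|\big)^2\ge\sum_j a_j^2$, and then bound the normalizing sum by $|\mJ|\mu_{max}$. If anything, your write-up is slightly more explicit about why $p^{(i)}$ and $q^{(i)}$ have the same total mass.
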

 \begin{proof}
Let $t$ be a time for which Proposition \ref{diff prop} holds and let  $\dL^\Sigma_i(t) =
\sum_{j\in i} \dL_{ji}(t)$, for $i\in\mI$. 
For each $i$, let $p_j=\dL_{ji}(t)/\dL^\Sigma_i(t)$ and
$q_j=\dL_{x_{ji}i}(t)/\dL^\Sigma_i(t)$. By Lemma \ref{lamba diff}, $p$ and $q$ both have
the same support. For each $i$, we applying Pinsker's Lemma
\begin{equation*}
 \sum_{j\in i} \frac{\dL_{ji}(t)}{\dL^\Sigma_i(t)} \log
\frac{\dL_{ji}(t)}{\dL_{x_{ji}i}(t)} = \sum_{j\in i} p_j \log \frac{p_j}{q_j} 
\geq \bigg( \sum_{j\in i} |p_j-q_j| \bigg)^2 \geq \sum_{j\in i} |p_j-q_j|^2 =
\frac{1}{(\aL_i^\Sigma(t))^2} \sum_{j\in i} \Big( \dL_{ji}(t) -
\dL_{x_{ji}i}(t)\Big)^2
\end{equation*}
Multiplying the left and right of this inequality by $-\aL_i^\Sigma(t)$, summing over
$i\in\mI$ gives
\begin{equation*}
 \frac{d\beta(m(t))}{dt}=-\sum_{i\in\mI}\sum_{j\in i} \dL_{ji}(t)  \log
\frac{\dL_{ji}(t)}{\dL_{x_{ji}i}(t)}  \leq -\sum_{i\in\mI}\frac{1}{\aL_i^\Sigma(t)}
\sum_{j\in i} \Big( \dL_{ji}(t) -
\dL_{x_{ji}i}(t)\Big)^2.
\end{equation*}
Recall that from Lemma \ref{lamba diff} that $\dL_{ji}(t)\leq \mu_{max}$ thus
$\dL^\Sigma_{i}(t)\leq |\mJ|\mu_{max}$. Applying this bound to $\dL^\Sigma_{i}(t)$ the above
equation gives the required result \eqref{eq:formal diff bound}.
 \end{proof}

We define $m^*$ to be a solution to the optimization problem
\begin{equation}\label{optimize beta}
 \text{minimize}\quad \beta(m) \quad \text{subject to}\quad \sum_{j\in i} m_{ij}=n_i,\quad i\in\mI
\quad \text{over}\quad m_{ji}\geq 0,\quad i\in \mI,\; j\in i.
\end{equation}
As we discussed, we expect the path of the $m(t)$ to converge to the optimal value of the
optimization. To conduct further analysis, we characterize the dual of this problem.

\begin{lemma}\label{duality-lemma}
The dual of the optimization \eqref{optimize beta} is
\begin{equation*}
\text{maximize}\quad \sum_{i\in\mI} n_i\log \dL_i \quad \text{subject to} \quad \sum_{i: j\in
i} \frac{\dL_i}{\mu_{ji}} \leq 1 \quad \text{over}\quad \dL_i \geq 0,\;\; i\in\mI. 
\end{equation*} 
\end{lemma}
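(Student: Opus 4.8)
The plan is to identify \eqref{optimize beta} as the primal of a convex program whose Lagrangian dual is precisely the stated maximization, and then to invoke strong duality. First I would attach a multiplier $\lambda_i\in\bR$ to each of the affine equality constraints $\sum_{j\in i}m_{ji}=n_i$ and form, over $m_{ji}\ge 0$, the Lagrangian
$$ L(m,\lambda)\bydef \beta(m)+\sum_{i\in\mI}\lambda_i\Big(n_i-\sum_{j\in i}m_{ji}\Big). $$
Writing $\beta(m)=\sum_{j,i}m_{ji}\log m_{ji}-\sum_j m_j\log m_j+\sum_{j,i}m_{ji}\log\mu_{ji}$ exhibits $\beta$ as a sum of (jointly convex) perspective functions of the negative entropy plus a linear term, so $L(\cdot,\lambda)$ is convex and, crucially, the inner infimum $g(\lambda)\bydef\inf_{m\ge 0}L(m,\lambda)$ decouples over queues: setting $\dL_i\bydef e^{\lambda_i}$,
$$ g(\lambda)=\sum_{i\in\mI}n_i\log\dL_i+\sum_{j\in\mJ}\ \inf_{m_{j\cdot}\ge 0}\Big[\sum_{i:j\in i}m_{ji}\log\tfrac{m_{ji}\mu_{ji}}{m_j}-\sum_{i:j\in i}m_{ji}\log\dL_i\Big]. $$

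The key computation is the queue-$j$ infimum. Parametrising $m_{ji}=t\,p_i$ with $t=m_j\ge 0$ and $p=(p_i)_{i:j\in i}$ a probability vector, the bracket equals $t\sum_{i:j\in i}p_i\log\frac{p_i}{\dL_i/\mu_{ji}}$; minimising over $p$ by the Gibbs variational principle (equivalently, the log-sum inequality) gives $-t\log\big(\sum_{i:j\in i}\dL_i/\mu_{ji}\big)$, and then minimising over $t\ge 0$ gives $0$ when $\sum_{i:j\in i}\dL_i/\mu_{ji}\le 1$ and $-\infty$ otherwise. Hence $g(\lambda)=\sum_{i\in\mI}n_i\log\dL_i$ whenever $\sum_{i:j\in i}\dL_i/\mu_{ji}\le 1$ for every $j\in\mJ$, and $g(\lambda)=-\infty$ otherwise; maximising $g$ over $\lambda\in\bR^I$ is therefore exactly the stated problem. (The supremum over $\dL_i=e^{\lambda_i}>0$ equals that over $\dL_i\ge 0$ because the objective tends to $-\infty$ as any coordinate with $n_i>0$ tends to $0$, and routes with $n_i=0$ are vacuous.)

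It remains to rule out a duality gap, i.e.\ to show $\sup_\lambda g(\lambda)=\beta^*$. The feasible set $\{m\ge 0:\sum_{j\in i}m_{ji}=n_i,\ i\in\mI\}$ is a nonempty compact polytope; $\beta$ is convex and (using $x\log x\to 0$ as $x\to 0^+$) continuous on it, so $\beta^*$ is finite and attained; all constraints are affine; and the point $m_{ji}=n_i/k_i$ is feasible and lies in the relative interior of $\mathrm{dom}\,\beta$. By the standard strong-duality theorem for convex programs with affine constraints this yields zero duality gap, which is the lemma. I would also record, for later use in the proof of Theorem~\ref{Thrm3}, that the stationarity/complementary-slackness conditions at an optimal primal--dual pair force $m^{*}_{ji}\mu_{ji}/m^{*}_j=\dL^{*}_i(n)$ at every queue with $m^{*}_j>0$, which links $\mathcal{M}(n)$ to the throughput vector of Theorem~\ref{Thrm1} via \eqref{fluidequns4}.

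I expect the strong-duality step to be the main obstacle: because $m_{ji}/m_j$ stays finite on $\{m_{ji}=0\}$, the primal minimiser can lie on the boundary of the nonnegative orthant, so one cannot simply write KKT conditions by differentiating $\beta$ at the optimum; this is what forces the appeal to the affine-constraint / relative-interior (Slater-type) form of strong duality rather than a hands-on verification.
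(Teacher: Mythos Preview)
Your computation of the Lagrangian dual is essentially the paper's: both attach multipliers $\lambda_i$ to the route constraints, set $\Lambda_i=e^{\lambda_i}$, recognise the inner minimisation at each queue as a relative-entropy problem (the paper writes this as $m_j D(p^{(j)}\Vert q^{(j)})$ with $p^{(j)}_i=m_{ji}/m_j$ and $q^{(j)}_i\propto e^{\lambda_i}/\mu_{ji}$; you phrase the same thing via the Gibbs variational principle on $p_i=m_{ji}/m_j$), and then minimise over the total $m_j\ge 0$ to see that the dual function is $\sum_i n_i\log\Lambda_i$ on $\{\sum_{i:j\in i}\Lambda_i/\mu_{ji}\le 1,\ \forall j\}$ and $-\infty$ off it.

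The one substantive difference is that you go on to argue strong duality via a Slater/affine-constraint condition, whereas the paper does not: Lemma~\ref{duality-lemma} as stated only asks for the Lagrangian dual \emph{problem}, and the place where the primal and dual are actually compared (Lemma~\ref{flow bound}) uses only \emph{weak} duality, exhibiting a feasible primal--dual pair with equal objective values and concluding optimality from that. Your strong-duality paragraph is correct (and the stationarity observation you record is indeed what underlies the throughput claim in Theorem~\ref{Thrm3}), but it is not required for the lemma or for the paper's subsequent arguments.
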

 \begin{proof}
Taking Lagrange multipliers $\lambda\in\bR^I$, its Lagrangian function is,
\begin{align*}
L(m,\lambda)&=\sum_{j\in\mJ: m_j>0} \sum_{i\in\mI} m_{ji}\log \frac{m_{ji}\mu_{ji}}{m_j} +
\sum_{i\in\mI} \lambda_i\left(n_i-\sum_{j:j\in i} m_{ji}\right)\\
&=\sum_{j\in\mJ: m_j>0} \sum_{i\in\mI} m_{ji}\log \frac{m_{ji}\mu_{ji}}{m_je^{\lambda_i}} +
\sum_{i\in\mI} \lambda_in_i\\
&=\sum_{j\in\mJ: m_j>0} m_j D(p^{(j)}||q^{(j)}) - \sum_{j\in\mJ: m_j>0}m_j \log \Big( \sum_{i: j\in i}
e^{\lambda_i}\mu_{ji}^{-1}\Big) +
\sum_{i\in\mI} \lambda_in_i.
\end{align*} 
In the last inequality above, we let $p^{(j)}=(m_{ji}/m_j : i\ni j)$ and
$q^{(j)}=(e^{\lambda_i}\mu_{ji}^{-1}/\sum_r e^{\lambda_r}\mu_{jr}^{-1} : i\ni j)$. Recalling our Remark
\ref{relative entropy remark} on relative entropies, this Lagrangian is minimized by taking
$p^{(j)}=q^{(j)}$ for each $j\in\mJ$, where $D(p^{(j)}||q^{(j)}) =0$, 
and then by minimizing over $m_j$. In particular,  we get the Lagrange dual function
\begin{equation*}
\min_{m\in\bR_+^K} L(m,\lambda)=
\begin{cases}
\sum_{i: n_i>0} n_i\lambda_i &\text{if}\quad \sum_{i:j\in i} \frac{e^{\lambda_i}}{\mu_{ji}}  \leq
1,\;\;\forall j\in\mJ,\\
-\infty &\text{otherwise.}
\end{cases}
\end{equation*}
Thus, we find dual
\begin{equation*}
\text{maximize} \quad \sum_{i: n_i>0} n_i\lambda_i\quad \text{subject to} \quad
\sum_{i: j\in i} \frac{e^{\lambda_i}}{\mu_{ji}} \leq 1\quad\text{over}\quad \lambda\in\bR^I.
\end{equation*}
Substituting $\dL_i=e^{\lambda_i}$ gives the required result
\begin{equation*}
\text{maximize}\;\; \sum_{i: n_i>0} n_i\log {\dL_i}\quad
\text{subject to}\quad \sum_{i: j\in i} \frac{\dL_i}{\mu_{ji}} \leq 1, \quad \forall
j\in\mJ\quad\text{over}
\quad\dL\in\bR_+^I.
\end{equation*}
 \end{proof}

\begin{lemma}\label{flow bound}
 If, for some $\epsilon>0$, $\beta(m(t))\geq \beta(m^*)+\epsilon$ then there exists $\delta> 0$,
$i\in\mI$ and $j\in i$ such that
\begin{equation*}
 |\dL_{ji}(t)-\dL_{x_{ji}i}(t)|\geq \delta.
\end{equation*}
\end{lemma}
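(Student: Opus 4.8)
The plan is to prove the (uniform in $t$) contrapositive by a compactness argument: I will exhibit a compact set of ``admissible'' configurations $(m,\dL)$ containing $(m(t),\dL(t))$ for almost every $t$, on which the flow‑imbalance functional is continuous and on which \emph{vanishing} imbalance forces $\beta(m)=\beta^*$; the minimum of the imbalance over the compact set $\{\beta\ge\beta^*+\epsilon\}$ is then the desired $\delta$, and it depends only on $\epsilon$ and the fixed network data (which is in fact what the subsequent decrease‑rate argument for Theorem~\ref{Thrm3} needs).

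Concretely, let $\mathcal{A}$ be the set of pairs $(m,\dL)\in\bR_+^{\mJ\times\mI}\times\bR_+^{\mJ\times\mI}$ satisfying $\sum_{j\in i}m_{ji}=n_i$ for all $i\in\mI$, $\sum_{i:j\in i}\dL_{ji}/\mu_{ji}\le1$ for all $j\in\mJ$, and the polynomial identity $m_j\dL_{ji}=\mu_{ji}m_{ji}$ for all $j,i$. Then $\mathcal{A}$ is closed and bounded, hence compact: the first family of constraints bounds $m$, and the second, together with $\dL\ge0$, gives $\dL_{ji}\le\mu_{ji}$. By \eqref{fluidequns5}, by \eqref{fluidequns2} differentiated at a point of differentiability, by Lemma~\ref{lamba diff}, and by \eqref{fluidequns4} rewritten as $m_j\dL_{ji}=\mu_{ji}m_{ji}$ (which also holds trivially when $m_j=0$, since then $m_{ji}=0$), the pair $(m(t),\dL(t))$ lies in $\mathcal{A}$ for almost every $t$. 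On $\mathcal{A}$ the map $g(m,\dL):=\max_{i\in\mI,\,j\in i}|\dL_{ji}-\dL_{x_{ji}i}|$ is continuous, and $\beta$ of \eqref{Lyabeta} extends to a continuous function by assigning value $0$ to the contribution of any queue with $m_j=0$ (writing $m_{ji}=p_{ji}m_j$ with $\sum_{i\ni j}p_{ji}=1$, that contribution is $m_j\sum_{i\ni j}p_{ji}\log(p_{ji}\mu_{ji})\to0$ as $m_j\to0$).

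The crux is the claim: if $(m,\dL)\in\mathcal{A}$ and $g(m,\dL)=0$, then $\beta(m)=\beta^*$. Indeed, $g(m,\dL)=0$ means that along each route $i$ the rate $\dL_{ji}=:\gamma_i$ is independent of $j\in i$. For any queue with $m_j>0$, \eqref{fluidequns4} gives $m_{ji}/m_j=\gamma_i/\mu_{ji}$, so summing over $i\ni j$ yields $\sum_{i\ni j}\gamma_i/\mu_{ji}=1$; for \emph{every} queue $\sum_{i\ni j}\gamma_i/\mu_{ji}=\sum_{i\ni j}\dL_{ji}/\mu_{ji}\le1$, so $\gamma=(\gamma_i)_{i\in\mI}$ is feasible for the dual problem identified in Lemma~\ref{duality-lemma} — here the capacity constraint \eqref{fluidequns2} is precisely what controls the queues that happen to be empty. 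Moreover, substituting $m_{ji}/m_j=\gamma_i/\mu_{ji}$ into \eqref{Lyabeta} (the only routes contributing have $n_i>0$, which forces $\gamma_i>0$ there) and using $\sum_{j\in i}m_{ji}=n_i$ gives
\[ \beta(m)=\sum_{j\in\mJ}\sum_{\substack{i:j\in i\\ m_{ji}>0}}m_{ji}\log\gamma_i=\sum_{i\in\mI}n_i\log\gamma_i, \]
which is exactly the dual objective at the feasible point $\gamma$. By weak duality for the pair of problems in Lemma~\ref{duality-lemma}, $\sum_{i\in\mI}n_i\log\gamma_i\le\beta^*$, while $\beta(m)\ge\beta^*$ since $m$ is feasible for \eqref{beta star}; hence $\beta(m)=\beta^*$, proving the claim.

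To finish, fix $\epsilon>0$. If $S_\epsilon:=\{(m,\dL)\in\mathcal{A}:\beta(m)\ge\beta^*+\epsilon\}$ is empty the statement is vacuous; otherwise $S_\epsilon$ is compact and, by the claim, $g>0$ on $S_\epsilon$, so $\delta:=\min_{S_\epsilon}g>0$ exists. Then for almost every $t$ with $\beta(m(t))\ge\beta^*+\epsilon$ we have $(m(t),\dL(t))\in S_\epsilon$, hence $g(m(t),\dL(t))\ge\delta$, i.e. $|\dL_{ji}(t)-\dL_{x_{ji}i}(t)|\ge\delta$ for some $i\in\mI$, $j\in i$, as required. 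The main obstacle is the crux of the third paragraph — upgrading the soft assertion ``balanced route flows are optimal'' to the exact identity $\beta(m)=\beta^*$ — which hinges on recognizing the common route rates $\gamma$ as a dual‑feasible point whose dual objective equals $\beta(m)$, with \eqref{fluidequns2} supplying dual feasibility at the empty queues; a secondary technical point is the continuous extension of $\beta$ to configurations with empty queues and the compactness of $\mathcal{A}$.
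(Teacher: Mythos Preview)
Your proof is correct and follows essentially the same approach as the paper: a compactness argument whose crux is that route-balanced rates $\gamma_i$ give a dual-feasible point with $\beta(m)=\sum_i n_i\log\gamma_i$, so weak duality from Lemma~\ref{duality-lemma} forces $\beta(m)=\beta^*$. Your packaging is in fact slightly cleaner than the paper's: by carrying the pair $(m,\dL)$ in the compact set $\mathcal{A}$ via the polynomial identity $m_j\dL_{ji}=\mu_{ji}m_{ji}$, you obtain convergence of the rates for free, whereas the paper argues by contradiction, extracts a subsequence of times with $m(t^k)\to\tilde m$, and then has to work through the intermediate empty queues (the $j^{+}_i$ step) to recover a common limiting rate on each route.
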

 \begin{proof}
We develop a proof by contradiction. If this result were not true, as the set of queue sizes is
compact, we could construct a sequence of times $t^k$, $k=1,2,3,...$ such that
$\beta(m(t^k))\geq \beta(m^*)+\epsilon$ and $m(t^k)\rightarrow \tilde{m}$, as $k\rightarrow \infty$
and 
\begin{equation*}
 \sum_{i\in\mI} \sum_{j\in i} \left| \dL_{ji}(t^k) - \dL_{ji}(t^k) \right|
\xrightarrow[k\rightarrow\infty]{} 0.
\end{equation*}
For each queue $j\in \mI$ with $\tilde{m}_j>0$, let $j^{+}_i$ be the next non-empty queue on route
$i$ i.e. $\tilde{m}_{j^{+}_i i}>0$. We can say
\begin{equation*}
 \dL_{ji}(t^k)\xrightarrow[k\rightarrow\infty]{}\frac{\tilde{m}_{ji}\mu_{ji}}{\tilde{m_j}}
\quad\text{and}\quad 
\dL_{j^{+}_ii}(t^k)\xrightarrow[k\rightarrow\infty]{}\frac{\tilde{m}_{j^{+}_ii}\mu_{j^{+}_ii}
} { \tilde{m}_{j^{+}}  }.
\end{equation*}
We letting $J^+$ be the set of queues on route $i$ between $j$ and $j^+_i$ that includes $j$ but
does not include $j^+_i$. Applying a triangle inequality across these queues, we can say that 
\begin{align*}
 \left| \frac{\tilde{m}_{ji}\mu_{ji}}{\tilde{m}_j} -\frac{\tilde{m}_{j^{+}_ii}\mu_{j^{+}_ii}
} { \tilde{m}_{j^{+}}}\right|  \leq & \lim_{k\rightarrow\infty} \left[  \left|
\frac{\tilde{m}_{ji}\mu_{ji}}{\tilde{m}_j}-\dL_{ji}(t^k) \right| + 
\left| \frac{\tilde{m}_{j^+_ii}\mu_{j^+_ii}}{\tilde{m}_{j^+_i}}-\dL_{j^+_ii}(t^k) \right|
+\sum_{l\in J^+ } \left| \dL_{li}(t^k) - \dL_{x_{li}i}(t^k)
\right| \right]=0.
\end{align*}
Applying this triangle inequality once more, for any queue $l$ on route $i$ that is between $j$ and
$j^+_i$, we see that
\begin{equation*}
\lim_{k\rightarrow \infty} \left| \frac{\tilde{m}_{ji}\mu_{ji}}{\tilde{m}_j} -
\dL_{li}(t^k)\right| =0.
\end{equation*}
In other words, for each route $i$ and for all queue $j\in i$, $\dL_{ji}(t^k)$ converges to some
value $\tilde{\aL}_i>0$ where if $\tilde{m}_j>0$ we have that
\begin{equation*}
\tilde{\aL}_i=\frac{\tilde{m}_{ji}\mu_{ji}}{\tilde{m}_j}
\end{equation*}
for some constant $\tilde{\aL}_i>0$. Observe that, by \eqref{fluidequns2}, for each queue $j\in
\mJ$
\begin{equation*}
 \sum_{i: j\in i} \frac{\tilde{\aL}_i}{\mu_{ji}} \leq 1
\end{equation*}
also
\begin{equation}\label{primal-dual}
 \beta(\tilde{m})=\sum_{i\in\mI} \sum_{j: j \in i} \tilde{m}_{ji} \log \tilde{\aL}_i =
\sum_{i\in\mI} n_i \log \tilde{\aL}_i. 
\end{equation}
Thus, the vector $\tilde{\aL}=(\tilde{\aL}_i: i\in \mI)$ is feasible for the dual problem
and the vector $\tilde{m}$ is feasible for the primal problem. We know by Weak Duality (for a
minimization) that any primal feasible solution is bigger than that of the dual. Thus, we know
by \eqref{primal-dual} that the primal equals dual solution and so $\tilde{m}$ must be optimal for
the primal problem i.e. $\beta(\tilde{m})=\beta(m^*)$. This must be a contradiction because by
assumption $\beta(m(t^k))\geq \beta(m^*)+\epsilon$ and thus by continuity of $\beta$,
$\beta(\tilde{m})\geq \beta(m^*)+\epsilon$.
 \end{proof}

We are now in a position to prove Theorem \ref{Thrm3}.

\begin{proof}[Proof of Theorem \ref{Thrm3}]
We found $\beta(m(t))$ was absolutely continuous in $t$. In Lemma \ref{formal diff bound}, we found
the derivative of $\beta(m(t))$ was almost everywhere negative and thus $\beta(m(t))$ must be a
decreasing function. 

Suppose for $s\in[0,t]$, $\beta(m(s)) \geq \beta(m^*)+\epsilon$ for some $\epsilon>0$ then by Lemma
\ref{flow bound} there exists an $i\in\mI$ and a $j\in i$ 
\begin{equation*}
\left| \dL_{ji}(s) - \dL_{ji}(s) \right| \geq \delta_{\epsilon}, 
\end{equation*}
for some  $\delta_{\epsilon}>0$. Thus, applying this to our bound in Lemma~\ref{formal diff bound} for intervals of time
$[0,t]$ such that $\beta(m(s)) \geq \beta(m^*)+\epsilon$, we have that
\begin{equation*}
\beta(m(t))\leq \beta(m(0)) - t\frac{\delta_{\epsilon}^2}{|\mJ|\mu_{max}}.
\end{equation*}
As $\beta(m(t))$ is bounded below by $\beta(m^*)$, the above inequality cannot be sustained for all
times $t$. In other words, eventually $\beta(m(t)) \leq \beta(m^*)+\epsilon$. Thus
$\beta(m(t))\searrow \beta(m^*)$. This proves the first assertion in Theorem \ref{Thrm3}. 

Now, it remains to show that $m(t)$ approaches $\mM$, the set of solutions to \eqref{optimize
beta}. Take some $\epsilon_1>0$. Let $m=(m_{ji}:i\in\mI, j\in i)$ be any vector with $\sum_{j:j\in
i} m_{ji}=n_i$ for $i\in\mI$ and such that 
\begin{equation*}
\min_{m^*\in\mathcal{M}} | m- m^*|\geq \epsilon_1.
\end{equation*}
Such an $m$ belongs to a compact set and thus, as $\beta$ is continuous, it must be that
$\beta(m)\geq \beta(m^*) +\epsilon$ for some $\epsilon>0$. Or stated differently, if $\beta(m)<
\beta(m^*) +\epsilon$ then it must be that 
\begin{equation*}
\min_{m^*\in\mathcal{M}} | m- m^*|< \epsilon_1.
\end{equation*}
As we have just shown, $\beta(m(t))< \beta(m^*) +\epsilon$ holds eventually for all fluid paths.
Thus, 
\begin{equation*}
\lim_{t\rightarrow\infty}\min_{m^*\in\mathcal{M}} | m(t)- m^*|=0.
\end{equation*}
 \end{proof}

 \appendix

\section{Proof of Lemma \ref{rate lemma}.}
\label{appendix1}


We consider both optimal solutions $\bar{\Lambda}^*(n)$ and $\Lambda^*(n)$. Let 
$$G_n(\Lambda)=\sum_{i\in\mI} n_i \log \Lambda_i$$
Since $\bar{\Lambda}^*(n)$ is the solution of an optimization with a larger feasible set
$G_n(\bar{\Lambda}^*(n))\geq G_n(\Lambda^*(n))$.

Take $v=\bar{\Lambda}^*(n)-\Lambda^*(n)$. Note $\Lambda^*(n)+\delta v$ belongs to feasible set
$$\bigg\{ \Lambda\geq 0 : \sum_{i: j\in i} \frac{\Lambda_i}{\mu_{ji}}\leq 1, \; j\in\mJ \bigg\}$$ 
for all $\delta$ suitably small. If this were not so then there would have been some constraint/queue
which we did not correctly include in the set of bottleneck links $\bar{J}$. Taking the partial
derivative of $G_n$ from $\Lambda^*(n)$ in the direction of $v$, we can then say that
\begin{equation*}
 \sum_{i\in\mI} v_i \frac{\partial
G_n(\Lambda^*(n))}{\partial \Lambda_i} \leq 0.
\end{equation*}
This holds because $\Lambda^*(n)$ is optimal. Now, also, by the concavity of $G_n(\cdot)$
\begin{equation*}
 G_n(\bar{\Lambda}^*(n))- G_n(\Lambda^*(n)) \leq  \sum_{i\in\mI} v_i \frac{\partial
G_n(\Lambda^*(n))}{\partial \Lambda_i}.
\end{equation*}
So, $G_n(\bar{\Lambda}^*(n))\leq G_n(\Lambda^*(n))$, and thus $G_n(\Lambda^*(n))=G_n(\bar{\Lambda}^*(n))$.
By the strict concavity the optimum of $G_n(\cdot)$ is unique, so, it must be that $\bar{\Lambda}^*(n)={\Lambda}^*(n)$.


\section{Lipschitz Continuity of $\beta(m(t))$.}\label{appendix2}
Before proceeding to prove the Lipschitz continuity of $\beta(m(t))$, i.e., Proposition~\ref{beta abs cont}, we give a proof
of Lemma~\ref{diff at zero}.

\begin{proof}[Proof of Lemma \ref{diff at zero}]
We use the fact that we know that the derivative exists. First, it is clear that $\frac{df}{dt}=0$
because
\begin{equation*}
 \frac{df}{dt}=\lim_{h\searrow 0} \frac{f(t+h)-0}{h} \geq 0\quad \text{and}  \quad
\frac{df}{dt}=\lim_{h\nearrow 0} \frac{f(t+h)-0}{h} \leq 0.
\end{equation*}
Now, noting that $f\log(p)$ is negative for all $p\leq 1$ and by using the same argument, we have
\begin{equation*}
 \frac{df\log p}{dt}=\lim_{h\searrow 0} \frac{f(t+h)\log p(t+h)-0}{h} \leq 0,\quad \text{and}  \quad
\frac{df\log p}{dt}=\lim_{h\nearrow 0} \frac{f(t+h)\log p(t+h)-0}{h} \geq 0.
\end{equation*}
Thus $\frac{df\log f}{dt}=0$.
  \end{proof}

We now demonstrate that the function $\beta(m(t))$ is Lipschitz continuous on any compact time
interval. Here $m(t)$ is any
solution to the fluid equations \eqref{fluidequns} and $\beta(m)$ is defined by \eqref{betalambda}.
The following arguments are adapted from Lemma 4.2 and Proposition 4.2 of Bramson \cite{Br96}. All
queues may empty in our network, so we have to apply some degree of care in proving the Lipschitz
continuity on compact time intervals.
First, we prove Lemma \ref{lamba diff}.

\begin{proof}[Proof of Lemma \ref{lamba diff}]
 Suppose that $m(t)$ and
$\aL(t)$ are differentiable at $t$. We may assume $\dL_{ji}(t)>0$ for some queue $j$ on
route $i$. Such a queue must exist because there is always some queue with $m_{ji}(t)>0$ as $n_i>0$
and thus by \eqref{fluidequns4} $\dL_{ji}(t)>0$. Now consider
$x_{ji}$ the next queue on route $i$, if $m_{x_{ji}i}(t)>0$ then by \eqref{fluidequns4}
$\dL_{x_{ji}i}>0$, and if $m_{x_{ji}i}(t)=0$ then $m'_{x_{ji}i}(t)=0$, thus
by \eqref{fluidequns1} $\dL_{x_{ji}i}(t)=\dL_{ji}(t)>0$. Continuing inductively we see
that $\dL_{ji}(t)>0$ for all queues.

From this argument we now see that the value of $\dL_{ji}(t)$ on any route $i$ is achieved by
a
queue $j^*$ with $m_{j^*i}>0$. Thus applying \eqref{fluidequns4}, $\dL_{ji}(t)\leq \mu_{ji}
\leq \mu_{max}$.
  \end{proof}

\begin{proposition}
For almost every $t>t_0>0$, there exists a constant $T>0$ such that if $t-t_0<\kappa T$ for
$\kappa\in\bN$ then
\begin{equation*}
\min_{ji} \dL_{ji}(t)> \frac{\min_{ji} \dL_{ji}(t_0)}{(1+\max_{ji} \mu_{ji})^\kappa}
\end{equation*}
Here $\kappa$ is a strictly positive constant which depends on $m(t_0)$, the fluid model state
at time $t_0$.
\end{proposition}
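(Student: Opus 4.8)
The plan is to prove the bound first for $\kappa=1$ on an interval $(t_0,t_0+T)$ whose length $T$ is chosen in terms of the state $m(t_0)$, and then to obtain every $\kappa\ge 2$ by iterating the $\kappa=1$ estimate. Fix a \emph{regular} time $t_0>0$, i.e.\ a point at which all the Lipschitz processes $\aL_{ji},m_{ji},m_j$ are differentiable; then Lemma~\ref{lamba diff} applies and $\delta_0\bydef\min_{ji}\dL_{ji}(t_0)>0$. I would first record a structural dichotomy valid at almost every time $s$: every queue is either \emph{empty}, $m_j(s)=0$, or \emph{full}, $m_{ji}(s)>0$ for all $i\ni j$. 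Indeed a partially empty queue would force $m'_{ji}(s)=0$, hence by \eqref{fluidequns1} and \eqref{fluidequns4} $\dL_{l_{ji}i}(s)=\dL_{ji}(s)=0$, and propagating this around route $i$ contradicts $n_i>0$ in \eqref{fluidequns5}; the capacity constraint \eqref{fluidequns2} is what prevents the state from surviving on a set of positive measure. Consequently $\min_{ji}\dL_{ji}(s)=\min\{\mu_{ji}m_{ji}(s)/m_j(s):j\in i,\ m_j(s)>0\}$, attained at a full queue, which is the form I will work with.

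For the $\kappa=1$ step, set $\epsilon_0\bydef\min\{m_j(t_0):m_j(t_0)>0\}>0$, and note that $\aL_{ji}$ is $\mu_{max}$-Lipschitz by \eqref{fluidequns2}, so $m_{ji}$ is $\mu_{max}$-Lipschitz and $m_j$ grows by at most $I\mu_{max}(t-t_0)$. I would choose $T>0$ small (depending only on $\delta_0,\epsilon_0,\mu_{max},I$) so that on $(t_0,t_0+T)$: (a) every queue full at $t_0$ stays full; and (b) for each such queue $j$ (still full at $t$ by (a)) and each $i\ni j$,
\[
\dL_{ji}(t)=\mu_{ji}\frac{m_{ji}(t)}{m_j(t)}\ \geq\ \mu_{ji}\frac{m_{ji}(t_0)-\mu_{max}(t-t_0)}{m_j(t_0)+I\mu_{max}(t-t_0)}\ >\ \frac{\dL_{ji}(t_0)}{1+\mu_{max}}\ \geq\ \frac{\delta_0}{1+\mu_{max}}.
\]
Here (b) is possible because $\dL_{ji}(t_0)\ge\delta_0$ forces $m_{ji}(t_0)\ge(\delta_0/\mu_{max})m_j(t_0)\ge\delta_0\epsilon_0/\mu_{max}$, so the numerator is bounded below while the two perturbations are $o(t-t_0)$ with constants depending only on the listed quantities. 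This already settles $\kappa=1$ whenever the minimising pair $(j,i)$ at time $t$ has $j$ full at $t_0$.

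The remaining case — $(j,i)$ minimising at $t\in(t_0,t_0+T)$ but with $j$ empty at $t_0$ (and full at $t$) — is the technical heart. Now $j$ has filled from zero, so by \eqref{fluidequns1}, $m_{ji}(t)=\int_{t_0}^t(\dL_{l_{ji}i}(s)-\dL_{ji}(s))\,ds$ and $m_j(t)=\sum_{i'\ni j}\int_{t_0}^t(\dL_{l_{ji'}i'}(s)-\dL_{ji'}(s))\,ds$. Tracing route $i$ backwards from $j$ to the first queue $\hat\jmath$ that is full at $t_0$ — hence full on the whole interval by (a) — and telescoping the flow-balance identities along the intervening chain of empty queues shows $\int_{t_0}^t\dL_{l_{ji}i}(s)\,ds$ equals $\int_{t_0}^t\dL_{\hat\jmath i}(s)\,ds$ minus the (small) route-$i$ fluid now sitting on that chain; and by (b), $\dL_{\hat\jmath i}(s)>\delta_0/(1+\mu_{max})$ for $s\in(t_0,t_0+T)$. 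To finish I must bound $m_j(t)$ from above relative to $m_{ji}(t)$, and this is exactly where I would use the per-queue capacity constraint \eqref{fluidequns2} at $j$, integrated over $[t_0,t]$: it limits the total build-up $m_j(t)=\sum_{i'}m_{ji'}(t)$ to a fixed multiple of $(t-t_0)$, while $m_{ji}(t)\ge(\delta_0/(1+\mu_{max}))(t-t_0)-o(t-t_0)$, so shrinking $T$ once more (still only in terms of $m(t_0),\mu_{max},I$) yields $\dL_{ji}(t)=\mu_{ji}m_{ji}(t)/m_j(t)>\delta_0/(1+\mu_{max})$. This completes $\kappa=1$.

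For $\kappa\ge2$ I would argue by induction. Assuming the bound on $(t_0,t_0+(\kappa-1)T)$ with denominator $(1+\mu_{max})^{\kappa-1}$, pick a regular $t_1\in(t_0+(\kappa-1)T-\eta,\,t_0+(\kappa-1)T)$; then $\min_{ji}\dL_{ji}(t_1)>\delta_0/(1+\mu_{max})^{\kappa-1}$, and — choosing $T$ at the outset with a little room, using that only finitely many queues can have drained and that $\dL$ stays bounded below on $[t_0,t_0+\kappa T)$ — the same $T$ validates (a)–(b) from $t_1$. Applying the $\kappa=1$ step from $t_1$ gives $\min_{ji}\dL_{ji}(t)>\min_{ji}\dL_{ji}(t_1)/(1+\mu_{max})>\delta_0/(1+\mu_{max})^{\kappa}$ on $(t_1,t_1+T)$; together with the $(\kappa-1)$-bound on $(t_0,t_0+(\kappa-1)T)$ and $\eta\downarrow0$ this covers all of $(t_0,t_0+\kappa T)$. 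The main obstacle is the empty-queue case of the $\kappa=1$ step: showing that the routes competing at a just-filling queue dilute route $i$'s instantaneous throughput by no more than the factor $1+\mu_{max}$, which is precisely what the capacity constraint \eqref{fluidequns2} buys; a secondary, purely bookkeeping, difficulty is keeping $T$ from collapsing along the iteration.
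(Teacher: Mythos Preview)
Your approach diverges from the paper's, and there is a genuine gap in the empty-queue case.

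\textbf{Where it breaks.} For a queue $j$ empty at $t_0$ and full at $t$, you claim $m_{ji}(t)\ge(\delta_0/(1+\mu_{max}))(t-t_0)-o(t-t_0)$. But the telescoping identity you invoke actually gives
\[
m_{ji}(t)=\int_{t_0}^{t}\dL_{\hat\jmath\, i}(s)\,ds\;-\;\sum_{l=1}^{k-1}m_{q_li}(t)\;-\;\int_{t_0}^{t}\dL_{ji}(s)\,ds,
\]
and the last term --- the cumulative \emph{output} of route-$i$ fluid from $j$ --- is in general $\Theta(t-t_0)$, not $o(t-t_0)$. So the asserted lower bound on $m_{ji}(t)$ does not follow. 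More fundamentally, when $j$ fills from zero the ratio $m_{ji}(t)/m_j(t)$ is determined by a balance of instantaneous rates, not by Lipschitz perturbations of fixed initial data; dividing a putative lower bound on $m_{ji}(t)$ of order $(t-t_0)$ by an upper bound on $m_j(t)$ of the same order yields at best a constant depending on network parameters, not the specific factor $\delta_0/(1+\mu_{max})$. Note also that \eqref{fluidequns2} bounds \emph{output} at $j$, not input, so it does not by itself deliver the ``dilution by at most $1+\mu_{max}$'' you attribute to it.

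\textbf{What the paper does instead.} The paper isolates a single-queue input--output lemma (Lemma~\ref{rate bound}): if the input rate satisfies $\dL_{l_{ji}i}(s)>c$ on $[t_0,t]$, then $\dL_{ji}(s)\ge\min\{\dL_{ji}(t_0),\,c/(1+\mu_{max})\}$ on the same interval. This is proved by contradiction --- take the last time the bound held and compare average input against average departure plus average queue growth --- and it treats empty and full queues uniformly, so no case split is needed. Then (Lemma~\ref{network lower bound}) each route always has a queue carrying at least its average share of fluid, which therefore stays nonempty on an interval of length $T=\min_r n_r/(2I\mu_{max})$ depending only on network parameters; chaining the input--output lemma along the route from this anchor gives the bound after at most $K=\max_i k_i$ applications. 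Because $T$ is independent of $m(t_0)$, the ``secondary difficulty'' you identify --- $T$ collapsing under iteration --- simply does not arise.
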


In order to prove this proposition we require the following lemma

\begin{lemma}\label{rate bound}
For almost every time $t_0$ and $t$ with $t>t_0$, if a queue $j$ on route $i$, has arrival
process from the queue before $j$, $l_{ji}$, such that for almost every 
$s\in [t_0,t]$
\begin{equation*}
 \dL_{l_{ij} j}(s) > c,
\end{equation*}
then the output of route $i$ from queue $j$ satisfies
\begin{equation}\label{Lambda below}
 \dL_{ji}(t) \geq c'
\end{equation}
where
\begin{equation*}
 c'=     \min\left\{ \dL_{ji}(t_0), \frac{c}{1+\max_{ji} \mu_{ji}} \right\}.
\end{equation*}
\end{lemma}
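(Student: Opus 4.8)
The plan is to prove the apparently stronger statement that $\dL_{ji}(s)\ge c'$ at every differentiability point $s\in[t_0,t]$, by running a comparison argument forward from $t_0$. The two facts from the fluid equations I would lean on are: (i) whenever $m_j(s)>0$, equation \eqref{fluidequns4} gives $\dL_{ji}(s)=\mu_{ji}\,m_{ji}(s)/m_j(s)$ together with $\sum_{i':j\in i'}m_{ji'}(s)/m_j(s)=1$, so in particular $\dL_{ji'}(s)\le\mu_{ji'}\le\mu_{max}$ for all routes at $j$; and (ii) arguing inductively along each route exactly as in the proof of Lemma~\ref{lamba diff}, $\dL_{ki}(s)\le\mu_{max}$ for \emph{every} queue $k$ and route $i$, so the total inflow rate into queue $j$ is bounded and hence, via \eqref{fluidequns1}, the growth rate $m_j'(s)=\sum_{i':j\in i'}\bigl(\dL_{l_{ji'}i'}(s)-\dL_{ji'}(s)\bigr)$ is controlled by a multiple of $\mu_{max}$. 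I also record that, by Lemma~\ref{diff at zero} and \eqref{fluidequns1}, on the closed set $\{s:m_j(s)=0\}$ one has $\dL_{ji}(s)=\dL_{l_{ji}i}(s)$, i.e.\ route-$i$ fluid simply passes through an empty queue.

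First I would dispose of the degenerate situations: if $m_j(t)=0$ then $m_{ji}(t)=0$, hence $m_{ji}'(t)=0$ and $\dL_{ji}(t)=\dL_{l_{ji}i}(t)>c\ge c'$; likewise if $m_{ji}(t)=0$ while $m_j(t)>0$. So I may assume $m_{ji}(t)>0$ and $m_j(t)>0$, and set $\tau:=\sup\{\,s\in[t_0,t]:m_j(s)=0\,\}$, with the convention $\tau:=t_0$ if queue $j$ is busy throughout $[t_0,t]$. On $(\tau,t]$ the queue is busy, so $\dL_{ji}$ agrees there with the continuous function $\mu_{ji}m_{ji}/m_j$, and it suffices to show that the Lipschitz function $\psi(s):=\mu_{ji}m_{ji}(s)-c'\,m_j(s)$ stays $\ge 0$ on $[\tau,t]$. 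At $s=\tau$ this holds: if $\tau=t_0$ it reads $\dL_{ji}(t_0)\ge c'$ (trivial if $m_j(t_0)=0$), while if $\tau>t_0$ then $m_j(\tau)=m_{ji}(\tau)=0$. For preservation I would use a first-passage argument: if $\psi$ ever returns to $0$ from positive values at a point $s\in(\tau,t]$ where the relevant functions are differentiable, then $m_j(s)>0$, $\dL_{ji}(s)=c'$, and $\psi'(s)\le 0$. Differentiating and using \eqref{fluidequns1}, $\psi'(s)=\mu_{ji}\bigl(\dL_{l_{ji}i}(s)-c'\bigr)-c'\,m_j'(s)>\mu_{ji}(c-c')-c'\,m_j'(s)$; and the defining inequality $c'\le c/(1+\mu_{max})$ gives $c'\mu_{max}\le c-c'$, so that $c'\,m_j'(s)\le\mu_{ji}(c-c')$ once $m_j'(s)$ is bounded by the appropriate multiple of $\mu_{max}$ supplied by (i)--(ii). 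This forces $\psi'(s)>0$, a contradiction. Hence $\psi\ge 0$ on $[\tau,t]$, and in particular $\dL_{ji}(t)=\mu_{ji}m_{ji}(t)/m_j(t)\ge c'$.

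The main obstacle is the bookkeeping around queue emptyings, which is exactly what makes the analogous statements of Bramson (Lemma~4.2 and Proposition~4.2 of \cite{Br96}) technical. Concretely, $\dL_{ji}$ need not be continuous at the start of a busy period, so the reduction from the a.e.-defined derivative $\dL_{ji}$ to the continuous representative $\mu_{ji}m_{ji}/m_j$, the verification of the boundary value at $\tau$, and the first-passage step must all be carried out in terms of the continuous functions $m_{ji}$ and $m_j$, with the measure-zero non-differentiability sets carefully excluded; Lemma~\ref{diff at zero} is the tool that tames the behaviour on $\{m_j=0\}$. A secondary, purely arithmetic, obstacle is extracting the precise constant $1/(1+\mu_{max})$: one needs the sharp bound on $m_j'(s)$ at a crossing point $s$ (equivalently, on the net inflow rate into queue $j$) for the estimate $c'\,m_j'(s)\le\mu_{ji}(c-c')$ to close, and any weaker bound only yields the same statement with a smaller constant, which would still suffice for the subsequent uses of the lemma.
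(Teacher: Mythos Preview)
Your argument is essentially the paper's, recast in differential rather than integral language. The paper also disposes of the empty-queue case via $m_{ji}'(t)=0\Rightarrow\dL_{ji}(t)=\dL_{l_{ji}i}(t)>c$, then assumes $m_{ji}(t)>0$ and lets $\tilde t$ be the last earlier time with $\dL_{ji}(\tilde t)\ge c'$; rewriting $\mu_{ji}m_{ji}(t)/m_j(t)<c'$ in terms of the increments $\aL_{l_{ji}i}(\tilde t,t)$, $\aL_{ji}(\tilde t,t)$, $\Delta_j(\tilde t,t)$ and using $\mu_{ji}m_{ji}(\tilde t)\ge c'm_j(\tilde t)$ at $\tilde t$, it concludes that the \emph{average} $\aL_{ji}(\tilde t,t)/(t-\tilde t)$ exceeds $c'$, contradicting $\dL_{ji}<c'$ on $(\tilde t,t]$. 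Your function $\psi=\mu_{ji}m_{ji}-c'm_j$ is exactly the quantity being manipulated, and your inequality $\psi'>0$ on $\{\psi\le 0\}$ is the pointwise version of their averaged inequality. The integral formulation buys one thing: it sidesteps the concern that the first crossing time of $\psi$ need not be a differentiability point, which is the obstacle you flag; conversely, your choice to work with the genuinely Lipschitz $\psi$ rather than the only a.e.-defined $\dL_{ji}$ makes the boundary analysis at $\tau$ tidier. Your remark about the constant is also accurate: both versions ultimately rest on a bound for $m_j'$ (respectively for $\Delta_j/(t-\tilde t)$), and the exact factor $1+\mu_{max}$ is immaterial for the iterated application in Lemma~\ref{network lower bound}.
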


\begin{proof}

We note that $\aL_{ij}(t)$ is a Lipschitz function and thus is almost everywhere
differentiable. We assume that $t$ and $t'$ are differentiable points where
\eqref{Lambda below} is violated. 
Observe that if $m_{ji}(t)=0$ then, by
\eqref{fluidequns1}, $0=m'_{ji}(t)=\dL_{ji}(t)-\dL_{l_{ji}i}(t)$. Thus
$\dL_{ji}(t)=\dL_{l_{ji}i}(t)>c> c'$. So it must be that
$m_{ji}(t)>0$. Consequently by \eqref{fluidequns4}, $\dL_{ji}(s)=\mu_{ji}m_{ji}(s)/m_j(s)$ must
be continuous on an interval around $t$ and there must be an open interval around $t$ for which
$\dL_{ji}(s)<c'$. 

For a $\dL_{ji}(s)$ to get small we need the total number of departures
to be comparable relative to the arrivals. So, we will next argue the contradiction that
$\dL_{ji}(t)$ cannot enter an interval of time for which $\dL_{ji}(s)<c'$ without the
average departure rate $(\dL_{ji}(t)-\dL_{ji}(s))/(t-s)$ being bigger that $c'$.

We let $\tilde{t}$ be the last time before $t$
for which $\aL_{ji}(\tilde{t})\geq c'$. Note $\aL_{ji}(t_0)\geq c'$, so $\tilde{t}$ is well
defined. We use the shorthand $\aL_{ji}(\tilde{t},t)=\aL_{ji}(t)-\aL_{ji}(\tilde{t})$
and $\Delta_j(\tilde{t},t)=m_j(t)-m_j(\tilde{t})$. As $m_{ij}(t)>0$, by \eqref{fluidequns4}, we have
\begin{equation*}
c'> \dL_{ji}(t) = \mu_{ji}\frac{m_{ji}(t)}{m_j(t)}= \mu_{ji}\frac{ m_{ji}(\tilde{t}) +
\aL_{l_{ji}i}(\tilde{t},t) -\aL_{ji}(\tilde{t},t) }{m_j(\tilde{t})+ \Delta_j(\tilde{t},t)}.
\end{equation*}
Rearranging the above expression implies
\begin{equation*}
 \aL_{ji}(\tilde{t},t) > \aL_{l_{ji}i}(\tilde{t},t) - c' \Delta_j(\tilde{t},t) + \mu_{ji}
m_{ji}(\tilde{t}) - c'm_j(\tilde{t}) \geq \aL_{l_{ji}i}(t) - c' \Delta_j(\tilde{t},t).
\end{equation*}
The last inequality holds because, by \eqref{fluidequns4}, $\dL_{ji}(\tilde{t}) \geq c'$
implies $\mu_{ji}
m_{ji}(\tilde{t}) - c'm_j(\tilde{t})\geq 0$.
We now look at the mean value of the terms in the above inequality:
\begin{equation*}
 \frac{\aL_{ji}(\tilde{t},t)}{t-\tilde{t}}  > 
\frac{\aL_{l_{ji}i}(\tilde{t},t)}{t-\tilde{t}}  - c' \frac{\Delta_j(\tilde{t},t)}{t-\tilde{t}} 
\geq c - c' \max_{ji} \mu_{ji} \geq  c'.
\end{equation*}
In the second inequality above, we use the assumption that $\dL_{l_{ij} j}(s) > c,\; s\in
[t_0,t_0+T]$ and the fact the average change in $m_j(t)$, $\Delta_j(\tilde{t},t)/(t-\tilde{t})$, is
at most the maximum service rate $\max_{ji} \mu_{ji}$. The final inequality holds by our choice
of $c'$. 
This then contradicts our assumption that $ \dL_{ji}(s) < c' $ on the interval
$(\tilde{t},t]$.
  \end{proof}
The following lemma is a consequence achieved by iteratively applying the last result. We have to
apply some care because, in comparison to Bramson's open network analysis \cite{Br96}, every
queue can empty or have low arrival and departure rate.

\begin{lemma}\label{network lower bound}
For almost every $t_0$, there exists an interval of fixed length $[t_0,t_0+T]$ such that for almost
every $t\in[t_0,t_0+T]$ and for all $i\in\mI$ and $j\in\mJ$ 
\begin{equation*}
 \dL_{ji}(t)>\tilde{c}(t_0)>0.
\end{equation*}
Here $\tilde{c}(t_0)$ is a function of the network state at time $t_0$.
\end{lemma}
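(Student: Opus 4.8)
The plan is to use the queues that carry fluid at time $t_0$ as \emph{anchors}, to obtain their lower bounds by a continuity argument, and then to propagate those bounds to the remaining (momentarily empty) queues through a finite iteration of Lemma~\ref{rate bound}. First I would fix $t_0$ in the full-measure set of times at which all of $m_{ji}$, $m_j$, $\aL_{ji}$ are differentiable and at which the almost-everywhere conclusions of Lemmas~\ref{lamba diff} and~\ref{rate bound} apply, and set $c_0\bydef\min_{i\in\mI,\,j\in i}\dL_{ji}(t_0)$, which is strictly positive by Lemma~\ref{lamba diff} (here we use $n_i>0$ for every $i$). With $\mu_{max}\bydef\max_{i\in\mI,\,j\in i}\mu_{ji}$, the goal is to produce $T>0$ and $\tilde c(t_0)>0$, both determined by $m(t_0)$, such that $\dL_{ji}(t)\geq\tilde c(t_0)$ for almost every $t\in[t_0,t_0+T]$, all $i\in\mI$ and all $j\in i$.

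For the anchors, consider a queue $j$ with $m_j(t_0)>0$. Since $m_j$ is Lipschitz, the set $\{t:m_j(t)>0\}$ is open and contains $t_0$; on it \eqref{fluidequns4} gives $\dL_{ji}(t)=\mu_{ji}\,m_{ji}(t)/m_j(t)$, a continuous function whose value at $t_0$ is $\dL_{ji}(t_0)\geq c_0$. Hence there is $T_1>0$ with $\dL_{ji}(t)\geq c_0/2$ throughout $[t_0,t_0+T_1]$, for every such $j$ and every $i\ni j$; $T_1$ may be taken common to the finitely many pairs $(j,i)$ and, via the Lipschitz constants of $m_{ji}$, $m_j$ together with $c_0$ and $\min\{m_j(t_0):m_j(t_0)>0\}$, it is bounded below in terms of $m(t_0)$ alone.

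Now take any queue $j$ with $m_j(t_0)=0$ lying on a route $i$. Then $m_{ji}(t_0)=0$, so $m_{ji}$ has a minimum at $t_0$, its derivative vanishes there, and \eqref{fluidequns1} forces $\dL_{ji}(t_0)=\dL_{l_{ji}i}(t_0)$. Walking upstream along route $i$ through the maximal block of queues empty at $t_0$, and using that some queue $j_p\in i$ has $m_{j_p i}(t_0)>0$ (hence $m_{j_p}(t_0)>0$) because $\sum_{j\in i}m_{ji}(t_0)=n_i>0$, one reaches after at most $|\mJ|-1$ steps an anchor $q_0\in i$ with $m_{q_0}(t_0)>0$; along the resulting chain $q_0\to q_1\to\cdots\to q_\ell=j$ (with $q_1,\dots,q_\ell$ empty at $t_0$) the same argument repeated gives $\dL_{q_m i}(t_0)=\dL_{q_0 i}(t_0)\geq c_0$ for all $m$. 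By the anchor step $\dL_{q_0 i}(s)\geq c_0/2$ for a.e.\ $s\in[t_0,t_0+T_1]$, so Lemma~\ref{rate bound} applied at $q_1$ (with $c$ slightly below $c_0/2$, using $\dL_{q_1 i}(t_0)\geq c_0\geq\frac{c_0/2}{1+\mu_{max}}$) yields $\dL_{q_1 i}(t)\geq\frac{c_0/2}{1+\mu_{max}}$ for a.e.\ $t\in[t_0,t_0+T_1]$; iterating down the chain gives $\dL_{q_m i}(t)\geq\frac{c_0/2}{(1+\mu_{max})^{m}}$, and since $m\le\ell\le|\mJ|-1$ we conclude, together with the anchor bounds, that $\dL_{ji}(t)\geq\tilde c(t_0)\bydef\frac{c_0/2}{(1+\mu_{max})^{|\mJ|-1}}>0$ for almost every $t\in[t_0,t_0+T_1]$, all $i$ and all $j\in i$. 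Taking $T=T_1$ completes the proof.

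The main difficulty, compared with Bramson's open-network analysis~\cite{Br96}, is the absence of any exogenous input: each route of the closed network is a \emph{cycle}, so a naive forward propagation of a lower bound on $\dL$ has nowhere to start and, if carried around the whole cycle, would lose a multiplicative factor on every turn, producing nothing in the limit. The device above sidesteps this by anchoring the iteration at the queues holding fluid at $t_0$ — which exist on every route precisely because $n_i>0$ — where continuity of $m_j$ supplies the lower bound directly, and by letting Lemma~\ref{rate bound} act only along the bounded-length strings of empty queues between consecutive anchors, so that the number of degradation steps never exceeds $|\mJ|-1$ and $\tilde c(t_0)$ stays strictly positive. The only remaining care is to check that $T_1$ does not degenerate, which is immediate from the explicit Lipschitz bounds on the coordinates of $m$.
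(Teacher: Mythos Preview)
Your approach---anchor at queues carrying fluid at $t_0$ and then propagate lower bounds forward via Lemma~\ref{rate bound}---is exactly the paper's. The gap is in the ``fixed length'' clause. You take as anchors \emph{all} queues with $m_j(t_0)>0$ and extract $T_1$ from continuity of $\mu_{ji}m_{ji}/m_j$; as you note, the resulting $T_1$ is controlled by $\min\{m_j(t_0):m_j(t_0)>0\}$ and $c_0$, both of which can be arbitrarily small. So your $T_1$ depends on $m(t_0)$ and is not bounded below uniformly in $t_0$. The lemma, however, is used in the proof of Proposition~\ref{beta abs cont} by covering a compact time interval with \emph{finitely many} sub-intervals of length $T$; this step collapses if $T$ shrinks with the state.

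The paper avoids this by choosing, on each route $i$, a single anchor that carries at least the \emph{average} amount of route-$i$ fluid, $m_{ji}(t_0)\ge n_i/k_i$. This lower bound depends only on $n$ and the route structure, not on $m(t_0)$; a Lipschitz bound on $m_{ji}$ then gives the explicit, state-independent $T=\dfrac{\min_r n_r}{2k_i\,\mu_{max}}$ over which $\dL$ at that anchor stays bounded below by a fixed $c_1$. From there Lemma~\ref{rate bound} is iterated all the way around the route (through empty \emph{and} non-empty intermediate queues alike), degrading by the factor $1+\mu_{max}$ at each of at most $K=\max_i k_i$ steps. Your propagation through the strings of empty queues is correct and even more parsimonious in degradation steps; the only repair needed is to replace the nearest-upstream non-empty anchor by the average-fluid anchor, which buys the uniform $T$ at the price of a few more applications of Lemma~\ref{rate bound}.
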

\begin{proof}
On route $i$ there is always one queue with greater than or equal to the average amount of work.
Without loss of generality, i.e. relabelling queues if necessary, we assume that this is the first
queue on route $i$. So, we have that for any route $i$
\begin{equation*}
 m_{j_i^1 i}(t_0) \geq \frac{\min_r n_r}{I}.
\end{equation*}
The biggest rate that this queue could decrease is $\max_{ji} \mu_{ji}$. So, we have
\begin{equation*}
 m_{j_i^1 i}(t) \geq \frac{\min_r n_r}{2I}\qquad \text{for}\quad t\in [t_0,t_0+T],
\end{equation*}
where we define $T=\frac{\min_r n_r}{2I\max_{ji} \mu_{ji}}$, and
\begin{equation*}
\aL_{ji}(t_0)=\mu_{ji}\frac{m_{ji}(t_0)}{m_j(t_0)} > \frac{\min_{ji}\mu_{ji} \min_r
n_r}{2I\sum_r n_r}=c_1.
\end{equation*}
We can now repeatedly apply Lemma \ref{rate bound}. Starting from $c_1$ above for $k=1,\ldots,k_i-1$
we define
\begin{equation*}
 c_{k+1}=  \min\left\{ \dL_{ji}(t_0), \frac{c_k}{1+\max_{ji} \mu_{ji}} \right\}.
\end{equation*}
Each time we iterate, we reduce $c_k$ by at least $1+\max_{ji} \mu_{ji}$. A simple lower bound, which
is sufficient for our purposes, is that 
\begin{equation*}
 c_k \geq \frac{\min_{ji} \dL_{ji}(t_0)}{(1+\max_{ji}\mu_{ji})^K}\bydef \tilde{c}(t_0)
\end{equation*}
where $K=\max_{i} k_i$ is the longest route within the queueing network. Thus,
from this repeated application of Lemma \ref{rate bound}, we have that
\begin{equation*}
 \dL_{ji}(t) \geq  \tilde{c}(t_0)
\end{equation*}
for almost every $t\in [t_0,t_0+T]$.
\end{proof}

In a similar manner to Proposition 4.2 of Bramson \cite{Br96}, now we show that $\beta(m(t))$ is
Lipschitz on any compact time interval.

\begin{proof}[Proof of Proposition \ref{beta abs cont}]
Note that without loss of generality, we may assume interval $[t_0,t]$ is of length less than or
equal to $T=\frac{\min_r n_r}{2I\max_{ji} \mu_{ji}}$, where $T$ was derived in the last lemma,
Lemma \ref{network lower bound}. If $t-t_0>T$ then we can split the interval $[t_0,t]$ in to
overlapping sub-interval of size $T$ and then use the largest Lipschitz constant found in each
sub-interval as a Lipschitz constant for $[t_0.t]$.

Note that 
\begin{equation*}
 \beta(m(t)) = \sum_{j\in\mJ} \sum_{\substack{i: j\in i\\m_{ji}(t)>0}} m_{ji}(t) \log
\frac{m_{ji}(t)\mu_{ji}(t)}{m_{j}(t)}=\sum_{j\in\mJ} \sum_{i: j\in i} m_{ji}(t) \log
\dL_{ji}(t).
\end{equation*}
It is enough to prove Lipschitz continuity of each term summed above:
$$
| m_{ij}(t_2) \log (\dL_{ji}(t_2)) 
- m_{ij}(t_1) \log (\dL_{ji}(t_1)) |
\leq 
D_1 \, | t_2 - t_1 |
\ .
$$
By Lemma \ref{network lower bound} for $s\in [t_0,t]$, $\log(\dL_{ji}(s))$ is bounded below by
$\tilde{c}(t_0)$ and also above by $m_j$, the maximum service rate of queue $j$. So
\begin{equation*}
| \log(\dL_{ji}(s))| \leq  D_0\bydef \max\{|\log(m_j)|, |\log( \tilde{c}(t_0))|\}.
\end{equation*}

If $m_{ij}(t_2) = m_{ij}(t_1) = 0$, the relation is trivial, so we assume that $m_{ij}(t_2)>0$.
If $m_{ij}(t_1) = 0$ we have that 
$$
| m_{ij}(t_2) \log (\dL_{ji}(t_2) ) | 
\leq D_0 | m_{ij}(t_2) | \,  
 = D_0 | m_{ij}(t_2) - m_{ij}(t_1) |
\leq D_2 \, D_0 \, | t_2 - t_1 |
$$
where the constant $D_0$ is as above and the constant $D_2$ is
the Lipschitz constant of $m_{ij}(t)$.

Now assume $m_{ij}(t_2)$ and $m_{ij}(t_1)$ both positive, and without loss of generality that
$$ \dL_{ji}(t_1) \leq  \dL_{ji}(t_2) \ .$$
It follows that
\begin{align*}
\left| m_{ij}(t_2) \log  \dL_{ji}(t_2) 
- m_{ij}(t_1)  \log  \dL_{ji}(t_1)  \right| 
 \leq & 
\left| m_{ij}(t_2) - m_{ij}(t_1) \right|  \, \left|\log  \dL_{ji}(t_2)  \right|\\
& + m_{ij}(t_1) \, \left| \log  \dL_{ji}(t_2)  -  \log
 \dL_{ji}(t_1)  \right|
\end{align*}
Again the first term in this upper bound is less than or equal to $D_2 \, D_0 \, | t_2 - t_1 |$.
The second term, since the logarithm function is concave and has its derivative maximized
at the left-most point, can be bounded in the following way
\begin{eqnarray*}
m_{ij}(t_1) \, \left| \log  \dL_{ji}(t_2)  -  \log
 \dL_{ji}(t_1)  \right|
&\leq&  \, \frac{m_{ij}(t_1)}{\dL_{ji}(t_1)} \left| \dL_{ji}(t_2)  
    -  \dL_{ji}(t_1) \right| \\
& = &\left| m_j(t_1) \frac{m_{ji}(t_2)}{m_j(t_2)} - m_{ij}(t_1) \right| \\
&\leq& \frac{m_{ij}(t_2)}{m_j(t_2)} \left|m_j(t_2) - m_j(t_1) \right|
+  \left|m_{ij}(t_2) - m_{ij}(t_1) \right| \\
&\leq &  \left|m_j(t_2) - m_j(t_1) \right|
+  \left|m_{ij}(t_2) - m_{ij}(t_1) \right| \\
& \leq & D_3 \, | t_2 - t_1 |,
\end{eqnarray*}
which follows from the fact that $m_{ij}(t_2) \leq m_j(t_2)$ and the fact that both $m_{ij}(t)$ and
$m_j(t)$ are Lipschitz continuous.
The result follows by choosing $D_1 \geq \max\{D_3 , D_2 \, D_0\}$.
  \end{proof}

\bibliographystyle{plain}
\bibliography{references}

\begin{thebibliography}{10}

\bibitem{Anselmi2010}
J.~Anselmi and P.~Cremonesi.
\newblock A unified framework for the bottleneck analysis of multiclass
  queueing networks.
\newblock {\em Perform. Eval.}, 67:218--234, April 2010.

\bibitem{consolidation08}
J.~Anselmi, P.~Cremonesi, and E.~Amaldi.
\newblock Service consolidation with end-to-end response time constraints.
\newblock In {\em Euromicro-SEAA, IEEE}, pages 345--352, 2008.

\bibitem{BalS97}
G.~Balbo and G.~Serazzi.
\newblock Asymptotic analysis of multiclass closed queueing networks: Multiple
  bottlenecks.
\newblock {\em Performance Evaluation}, 30(3):115--152, 1997.

\bibitem{BasCMP75}
F.~Baskett, K.M. Chandy, R.R. Muntz, and F.G. Palacios.
\newblock Open, closed, and mixed networks of queues with different classes of
  customers.
\newblock {\em Journal of the ACM}, 22(2):248--260, 1975.

\bibitem{Berger99}
A.~Berger, L.~Bregman, and Y.~Kogan.
\newblock Bottleneck analysis in multiclass closed queueing networks and its
  application.
\newblock {\em Queueing Syst. Theory Appl.}, 31(3-4):217--237, 1999.

\bibitem{Bolch}
G.~Bolch, S.~Greiner, H.~de~Meer, and K.S. Trivedi.
\newblock {\em Queueing Networks and Markov Chains}.
\newblock Wiley-Interscience, 2005.

\bibitem{Br96}
Maury Bramson.
\newblock Convergence to equilibria for fluid models of head-of-the-line
  proportional processor sharing queueing networks.
\newblock {\em Queueing Systems}, 23:1--26, 1996.

\bibitem{BrPo82}
T.~C. Brown and P.~K. Pollett.
\newblock Some distributional approximations in markovian queueing networks.
\newblock {\em Advances in Applied Probability}, 14(3):pp. 654--671, 1982.

\bibitem{Casale2011}
G.~Casale.
\newblock A generalized method of moments for closed queueing networks.
\newblock {\em Perform. Eval.}, 68(2):180--200, 2011.

\bibitem{Casale04}
G.~Casale and G.~Serazzi.
\newblock Bottlenecks identification in multiclass queueing networks using
  convex polytopes.
\newblock In {\em MASCOTS '04}, pages 223--230, Washington, DC, USA, 2004. IEEE
  Computer Society.

\bibitem{Chandy82}
K.~Mani Chandy and Doug Neuse.
\newblock Linearizer: a heuristic algorithm for queueing network models of
  computing systems.
\newblock {\em Commun. ACM}, 25(2):126--134, February 1982.

\bibitem{ChenYao2001}
H.~Chen and D.~D. Yao.
\newblock {\em {Fundamentals of Queueing Networks}}.
\newblock Springer-Verlag, 2001.

\bibitem{ChMa91}
Hong Chen and Avi Mandelbaum.
\newblock Stochastic discrete flow networks: diffusion approximations and
  bottlenecks.
\newblock {\em Ann. Probab.}, 19(4):1463--1519, 1991.

\bibitem{CoTh91}
Thomas~M. Cover and Joy~A. Thomas.
\newblock {\em {Elements of Information Theory}}.
\newblock Wiley-Interscience, August 1991.

\bibitem{DuFi12}
P.~Dupuis and M.~Fischer.
\newblock On the construction of lyapunov functions for nonlinear markov
  processes via relative entropy.
\newblock {\em preprint}, 2012.

\bibitem{Xia12}
David~K. George, Cathy~H. Xia, and Mark~S. Squillante.
\newblock Exact-order asymptotic analysis of closed queueing networks.
\newblock {\em Journal of Applied Probability}, 2012.

\bibitem{GoMa84}
Jonathan~B. Goodman and William~A. Massey.
\newblock The non-ergodic jackson network.
\newblock {\em Journal of Applied Probability}, 21(4):pp. 860--869, 1984.

\bibitem{GorNew67}
W.~J. Gordon and G.F. Newell.
\newblock Closed queuemg systems with exponential servers.
\newblock {\em Operation Research}, (15):254--265, 1967.

\bibitem{Harr02}
Peter~G. Harrison and Sergio Coury.
\newblock On the asymptotic behaviour of closed multiclass queueing networks.
\newblock {\em Perform. Eval.}, 47(2):131--138, 2002.

\bibitem{KaMa92}
Haya Kaspi and Avi Mandelbaum.
\newblock Regenerative closed queueing networks.
\newblock {\em Stochastics Stochastics Rep.}, 39(4):239--258, 1992.

\bibitem{Ke97}
F.~P. Kelly.
\newblock Charging and rate control for elastic traffic.
\newblock {\em European Transactions on Telecommunications}, 8:33--37, 1997.

\bibitem{Walton09b}
F.~P. Kelly, L.~Massouli\'{e}, and N.~S. Walton.
\newblock Resource pooling in congested networks: proportional fairness and
  product form.
\newblock {\em Queueing Syst. Theory Appl.}, 63:165--194, December 2009.

\bibitem{KMT98}
F.~P. Kelly, A.~K. Maulloo, and D.~K.~H. Tan.
\newblock Rate control in communication networks: shadow prices, proportional
  fairness and stability.
\newblock {\em Journal of the Operational Research Society}, 49:237--252, 1998.

\bibitem{Kelly79}
F.P. Kelly.
\newblock {\em Reversibility and Stochastic Networks}.
\newblock Wiley, Chicester, 1979.

\bibitem{Knessl90}
Charles Knessl and Charles Tier.
\newblock Asymptotic expansion for large closed queuing networks.
\newblock {\em Journal of the ACM}, 37(1):144--174, 1990.

\bibitem{Knessl92}
Charles Knessl and Charles Tier.
\newblock Asymptotic expansions for large closed queueing networks with
  multiple job classes.
\newblock {\em IEEE Trans. Comput.}, 41(4):480--488, 1992.

\bibitem{Kumar90dynamicinstabilities}
P.~R. Kumar and T.~I. Seidman.
\newblock Dynamic instabilities and stabilization methods in distributed
  real-time scheduling of manufacturing systems.
\newblock {\em IEEE Transactions on Automatic Control}, 35:289--298, 1990.

\bibitem{Lipsky82}
Lester Lipsky, Chee-Min~Henry Lieu, Abolfazl Tehranipour, and Appie van~de
  Liefvoort.
\newblock On the asymptotic behavior of time-sharing systems.
\newblock {\em Commun. ACM}, 25(10):707--714, 1982.

\bibitem{McKenna84}
J.~McKenna and D.~Mitra.
\newblock Asymptotic expansions and integral representations of moments of
  queue lengths in closed markovian networks.
\newblock {\em Journal of the ACM}, 31(2):346--360, 1984.

\bibitem{MitraMcKenna86}
J.~McKenna and Debasis Mitra.
\newblock Asymptotic expansions for closed markovian networks with
  state-dependent service rates.
\newblock {\em J. ACM}, 33(3):568--592, 1986.

\bibitem{Pattipati90}
K.~R. Pattipati, M.~M. Kostreva, and J.~L. Teele.
\newblock Approximate mean value analysis algorithms for queuing networks:
  existence, uniqueness, and convergence results.
\newblock {\em J. ACM}, 37(3):643--673, July 1990.

\bibitem{Pittel}
B.~Pittel.
\newblock Closed exponential networks of queues with saturation: The
  jackson-type stationary distribution and its asymptotic analysis.
\newblock {\em Mathematics of Operations Research}, 4(4):357--378, 1979.

\bibitem{Po00}
P.K Pollett.
\newblock Modelling congestion in closed queueing networks.
\newblock {\em International Transactions in Operational Research},
  7(4–5):319 -- 330, 2000.

\bibitem{ReiK75b}
M.~Reiser and H.~Kobayashi.
\newblock Queueing networks with multiple closed chains: Theory and
  computational algorithms.
\newblock {\em IBM J. Res. Dev.}, 19(3):283--294, 1975.

\bibitem{RL80}
M.~Reiser and S.~S. Lavenberg.
\newblock Mean-value analysis of closed multichain queueing networks.
\newblock {\em Journal of the ACM}, 27(2):313--322, April 1980.

\bibitem{robert:2000}
Philippe Robert.
\newblock {\em Stochastic Networks and Queues}.
\newblock Springer-Verlag, 2000.

\bibitem{Scw}
P.~Schweitzer.
\newblock Approximate analysis of multi-class closed networks of queues.
\newblock {\em In Proceedings of the International Conference on Stochastic
  Control and Optimization}, 1979.

\bibitem{Scw81}
P.~Schweitzer.
\newblock Bottleneck determination in networks of queues.
\newblock {\em Proc. ORSA/TIMS Special Interest Conf on Appl. Probab. - Comput.
  Sci., The Interface, Boca Raton, FLA}, pages 471--485, 1981.

\bibitem{SSB93}
P.~Schweitzer, G.~Serazzi, and M.~Broglia.
\newblock A survey of bottleneck analysis in closed queues.
\newblock {\em Perf. Eval. of Comp. and Comm. Sys., LNCS, No. 729,
  Springer-Verlag, Berlin}, pages 491--508, 1993.

\bibitem{spitzer1971random}
F.~Spitzer and Mathematical~Association of~America.
\newblock {\em Random fields and interacting particle systems: notes on
  lectures given at the 1971 MAA Summer Seminar, Williams College,
  Williamstown, Massachusetts}.
\newblock Random Fields and Interacting Particle Systems. Mathematical
  Association of America, 1971.

\bibitem{Sr04}
R.~Srikant.
\newblock {\em The Mathematics of Internet Congestion Control}.
\newblock Birkhauser, 2004.

\bibitem{Urgaonkar05}
Bhuvan Urgaonkar, Giovanni Pacifici, Prashant Shenoy, Mike Spreitzer, and Asser
  Tantawi.
\newblock An analytical model for multi-tier internet services and its
  applications.
\newblock ACM SIGMETRICS, pages 291--302, New York, NY, USA, 2005. ACM.

\bibitem{Walton09}
N.~S. Walton.
\newblock Proportional fairness and its relationship with multi-class queueing
  networks.
\newblock {\em Annals of Applied Probability}, 19(6):2301--2333, 2009.

\bibitem{Wang08}
H.~Wang, K.~Sevcik, G.~Serazzi, and S.~Wang.
\newblock The general form linearizer algorithms: A new family of approximate
  mean value analysis algorithms.
\newblock {\em Perform. Eval.}, 65(2):129--151, February 2008.

\bibitem{Whitt84}
W.~Whitt.
\newblock Open and closed models for networks of queues.
\newblock {\em AT and T Bell Laboraties Technical Journal}, 63(9):1911--1979,
  1984.

\end{thebibliography}

\end{document}